\documentclass[12pt,a4paper, parskip=half]{article} 
\usepackage[T1]{fontenc}
\usepackage{mathptmx} 
\usepackage[scaled]{helvet}
\usepackage{courier}
\usepackage[active]{srcltx}
\usepackage[utf8]{inputenc}
\usepackage[english]{babel}
\usepackage{lmodern}
\usepackage{amsfonts}
\usepackage{amsmath}
\usepackage{graphicx,epsfig}
\usepackage{amssymb}
\usepackage{theorem,subfigure}
\usepackage[usenames,dvipsnames]{color} 
\usepackage{colortbl}
\definecolor{gray73}{RGB}{186,186,186}
\usepackage{graphicx}
\usepackage{array}
\usepackage{tikz}
\usetikzlibrary{matrix,arrows}
\usepackage{verbatim}
\usepackage{geometry}
\usepackage{multirow}
\usepackage{algorithmicx}
\usepackage{algpseudocodex}

\usepackage{todonotes}

\usepackage{capt-of}
\usepackage{caption}

\DeclareCaptionLabelFormat{mylabelformat}{#1 #2:}
\captionsetup{aboveskip=0pt, justification=justified, font=footnotesize, labelsep=space, labelfont=bf, labelformat=mylabelformat, width=14.3cm}
\usepackage{hyperref} 
\usepackage[capitalise]{cleveref}
\newcommand\mycom[2]{\genfrac{}{}{0pt}{}{#1}{#2}}
\newtheorem{theorem}{Theorem}[section]
\newtheorem{lemma}{Lemma}[section]
\newtheorem{corollary}{Corollary}[section]
\newtheorem{remark}[theorem]{Remark}

\newtheorem{algorithm}[theorem]{Algorithm}

\newenvironment{proof}[1][Proof]{\noindent\textbf{#1:} }{\ \rule{0.5em}{0.5em}}

\numberwithin{equation}{section}

\newcommand{\xx}{\mathbf x}
\newcommand{\yy}{\mathbf y}

\newcommand{\elll}{\boldsymbol \ell}
\newcommand{\nuu}{\boldsymbol \nu}
\newcommand{\YY}{\mathbf Y}

\newcommand{\WW}{\mathbf W}
\newcommand{\mm}{\mathbf m}
\newcommand{\cc}{\mathbf c}
\newcommand{\nn}{\mathbf n}
\newcommand{\rr}{\mathbf r}

\newcommand{\zz}{\mathbf z}

\newcommand{\sign}{\mathrm{sign}}

\newcommand{\diag}{\mathrm{diag}}
\renewcommand{\vec}{\mathrm{vec}}

\newcommand{\argmin}{\mathop{\mathrm{argmin}}}
\renewcommand{\mod}{\scriptsize{{\mathrm{mod}}}\,}

 at12pt
 at9pt


\textwidth16.5cm
\textheight23cm
\normalbaselineskip=13pt
\hoffset-1cm
\voffset-1cm

\begin{document}
\title{\LARGE MOCCA: A Fast Algorithm for Parallel MRI Reconstruction Using Model Based Coil Calibration}
\author {Gerlind Plonka\footnote{University of G\"ottingen, Institute for Numerical and Applied Mathematics, Lotzestr.\ 16-18,  37083 G\"ottingen, Germany. Email: (plonka,y.riebe)@math.uni-goettingen.de} \qquad
Yannick Riebe$^*$
}

\maketitle

\abstract{
\textbf{Abstract.} We propose  a new fast algorithm  for simultaneous recovery of the coil sensitivities and of the magnetization image from incomplete Fourier measurements in parallel MRI. Our approach is based on a parameter model for  the coil sensitivities using bivariate trigonometric polynomials of small degree.  The derived MOCCA algorithm has low computational complexity of ${\mathcal O}(N_c N^2 \log N)$ for $N \times N$ images and $N_c$ coils and achieves very good performance for incomplete MRI data. We  present  a complete mathematical analysis of the proposed reconstruction method. Further, we show that MOCCA achieves similarly good reconstruction results as ESPIRiT with a considerably smaller numerical effort which is due to the employed parameter model. Our numerical examples show that MOCCA can outperform several other reconstruction methods.
}
\smallskip

\noindent
\textbf{Key words.} parallel MRI, deconvolution, discrete Fourier transform, bivariate trigonometric polynomials, structured matrices, regularization\\
\textbf{AMS Subject classifications.} 15A18, 15B05, 42A10,  65F10, 65F22,  65T50, 94A08

\section{Introduction}
One of the biggest innovations in magnetic resonance imaging (MRI)  within the last years  is the concept of parallel MRI.
In this setting,  the use of multiple receiver  coils  allows  the reconstruction  of high-resolution  images from undersampled Fourier data such that the acquisition time  can be substantially reduced.
Assume we have $N_c$ receiver channels and (incomplete) discrete measurements, so-called $k$-space data, on a cartesian grid  
given in the form 
\begin{equation}\label{meas}
\textstyle y_{\nuu}^{(j)} := y^{(j)}(\frac{\nuu}{N}) =  \displaystyle \int\limits _{\Omega} s^{(j)}(\xx) \, m(\xx) \, {\mathrm e}^{-2 \pi {\mathrm i} \frac{\nuu}{N} \cdot \xx} d\xx + \textstyle n^{(j)}(\frac{\nuu}{N}), \quad j=0, \ldots , N_{c}-1,
\end{equation}
for $\nuu \in \Lambda_N:=\{ -\frac{N}{2}, \ldots , \frac{N}{2}-1\} \times \{ -\frac{N}{2}, \ldots , \frac{N}{2}-1\}$, with $N \in 2 {\mathbb N}$, in the $2D$-case,
and with $\xx=(x_{1}, x_{2})^{T}$.
Here, $m$ denotes the complex-valued unknown magnetization image and  $s^{(j)}$ are the  complex valued sensitivity profiles  of the $N_{c}$ individual coils. The  signal is disturbed by noise $n^{(j)}(\frac{\nuu}{N})$. Further, $\Omega \subset {\mathbb R}^{2}$ is the bounded area of interest. We assume for simplicity that $\Omega$ is a square centered  around zero. 
To achieve the wanted acceleration of the acquisition time, the goal is to 
 reconstruct the high-resolution magnetization image $m$  from a subsampled amount of data $y_{\nuu}^{(j)}$, $\nuu \in \Lambda_{\mathcal P}  \subset \Lambda_N$, thereby exploiting the information from the parallel receiver channels. 

\noindent
Unfortunately, in general, the coil sensitivity functions $s^{(j)}$  are also not a priori known and have to be  recovered from the measured data. Indeed,   the acquisition process produces unpredictable correlations between the magnetization image and the coil sensitivities on the one hand and correlations between different coil sensitivities on the other hand such that
 $s^{(j)}$  cannot be accurately estimated beforehand.
Therefore, the parallel MRI reconstruction problem can be seen as a multi-channel blind deconvolution problem.

\subsection*{Contributions} In this paper, we present a new \textbf{MO}del-based \textbf{C}oil {\textbf{CA}libration \!(MOCCA) algorithm to reconstruct the coil sensitivities $s^{(j)}$ and the magnetization image $m$ from the given (incomplete) measurements. 
Our new method employs the assumption that the coil sensitivities $s^{(j)}$ are smooth functions which can be represented using {\em bivariate trigonometric polynomials of small degree}  such that  all $s^{(j)}$ are already  determined by a small number of parameters. In other words, $s^{(j)}$ are assumed to have small support in $k$-space.
Therefore,  our MOCCA algorithm has  low computational complexity of  ${\mathcal O}(N_c N^2 \log N)$ to recover both, the sensitivities as well as the discrete $N \times N$ magnetization image $m$.
If the employed discrete models for $s^{(j)}$  and $m$ (see (\ref{discmodel})-(\ref{sjk}) or more generally (\ref{models1}) - (\ref{general})) are exactly satisfied, we can show that the MOCCA algorithm  reconstructs $s^{(j)}$ and $m$ exactly (up to one unavoidable ambiguity factor). \\
The parameter vectors determining the coil sensitivities $s^{(j)}$ can be simultaneously computed for all coils by finding the nullspace vector ${\mathbf c}$ of the so-called ''MOCCA-matrix'' (see (\ref{AM})), which is constructed from the given $k$-space data in the autocalibration signal (ACS) region. If the data satisfy the model, we show  that  the nullspace of our MOCCA-matrix is of dimension one almost surely, i.e., ${\mathbf c}$ is uniquely defined up to one normalization factor.\\
Our approach is conceptionally  different from ESPIRiT \cite{ESPIRIT} and from all other subspace methods, 
where the sensitivities are computed based on a low-rank assumption of structured $k$-space data.  
This low-rank assumption is not used in our method, instead, we assume that the sensitivities can be well represented by bivariate trigonometric polynomials of small degree.

Starting with the model (\ref{sjk}), we propose a completely different procedure to compute the coil sensitivities $s^{(j)}$. The low complexity of our MOCCA algorithm stems from the fact that all parameters needed to determine the sensitivities can be recovered by computing just one eigenvector of a moderately sized MOCCA-matrix.
By contrast, in  ESPIRiT \cite{ESPIRIT} and PISCO \cite{Lobos23},
the low-rank assumption for a structured matrix of $k$-space data leads to $N^2$ separated eigenvalue problems to compute the  coil sensitivity vectors $(s_{\nn}^{(j)})_{j=0}^{N_c-1}$ at each pixel ${\nn}$. In those methods,  even after applying the so-called sum-of-squares (sos) condition $\|(s_{\nn}^{(j)})_{j=0}^{N_c-1}\|_2=1$,  the additional problem arises that one needs to find suitable phase factors for each eigenvalue problem to ensure global smoothness of the phase. 
This problem is entirely avoided in the MOCCA algorithm.

We provide a complete  mathematical analysis of the proposed MOCCA reconstruction algorithm.
In the upcoming  paper \cite{KP24}, we will give deeper insights into the  relation between the model-based reconstruction of coil sensitivities based on trigonometric polynomials in MOCCA and the assumption that the $k$-space 
data satisfy a structured low-rank model as applied  e.g.\  in 
ESPIRiT \cite{ESPIRIT}, SAKE \cite{SAKE} or PISCO \cite{Lobos23}. 
This relation shows that  our model assumption is equally suitable.
At the same time, the new model allows a significantly faster computation of the sensitivities from the ACS region  by the MOCCA algorithm since it requires neither
the high effort of computing the coil sensitivities pixel-wise from many eigenvalue problems 
as in \cite{ESPIRIT,SAKE,Lobos23} nor has to deal with all additionally arising problems regarding 
the phase of sensitivity maps.\\
 While we will focus here on modeling the coil sensitivities using bivariate trigonometric polynomials, the approach can 
 be generalized to other expansions into smooth functions as e.g.\  linear combination of Gaussians, 
 algebraic polynomials or of splines. Moreover, the model can be directly incorporated into a variational 
 minimization approach such that other constraints on  $m$ and $s^{(j)}$ can be built in.

\subsection*{Related Work} Parallel magnetic resonance image reconstruction has a long history.
One of the most well-known methods is SENSE \cite{Pruess}. While in \cite{Pruess} the sensitivity functions are estimated beforehand, the SENSE-approach is flexible regarding the modeling of sensitivities.\\
A series of methods in parallel MRI  starts directly from a discretized setting as in  (\ref{discmodel}), where the given (incomplete) data are thought to be obtained by a discrete Fourier transform of the product of corresponding discrete function values of $m$ and $s^{(j)}$, see e.g.\ \cite{grappa,spirit,SM97,JGES98,HGHJ01,ESPIRIT,framelets}. 
Corresponding reconstruction algorithms are frequently 
based on local approximation of  unacquired $k$-space data,
see e.g.\ \cite{grappa, spirit}, or on 
subspace methods, see in particular \cite{Zhang,SAKE,ESPIRIT,she,LORAKS,PLORAKS,ALOHA,Hu23,Lobos23}.
 Often, a  prediction method for local approximation of 
 unacquired data $y^{(j)}_{\nuu}$ in $k$-space is used, which can be interpreted as a local interpolation scheme. This idea had been proposed  in GRAPPA \cite{grappa} and its forerunners SMASH \cite{SM97,JGES98}  and was developed further in SPIRiT \cite{spirit}. The main idea is to compute interpolation weights  from fully sampled  data in the  ACS region  of the $k$-space, which in turn are applied to recover the unacquired $k$-space samples. In SPIRiT \cite{spirit}, a self-consistency condition has been introduced to improve this concept.
Subspace methods rely on the assumption that a blockwise Hankel matrix constructed from $k$-space data has (approximately) low rank.
One of the best performing subspace methods is ESPIRiT \cite{ESPIRIT}, which is currently widely  used.
A closely related approach in \cite{Lobos23} proposes a nullspace algorithm together with several ideas to accelerate the computational effort.
 Low rank matrix completion approaches, 
 as \cite{SAKE}, however tend to be computationally expensive.
 
There have been already earlier attempts to apply model-based techniques  to recover the sensitivities for parallel MRI reconstruction, see e.g. \cite{Morrison, JSENSE, Sheng09}, where it is assumed that the sensitivities can be written as bivariate algebraic polynomials.
These ideas strongly differ from our approach regarding the sensitivity model as well as regarding the reconstruction method. 
 In \cite{JSENSE, Sheng09},
 the polynomial coefficients determining the coil sensitivities are computed  using a time-consuming optimization method based on alternating minimization.

Besides the above mentioned models, we shortly refer to two further important classes of reconstruction models which have been studied extensively.
The introduction of compressed sensing  algorithms led to MRI reconstructions from incomplete data via convex optimization, \cite{Lustig}.
Furthermore, the better understanding that the  coil sensitivities need to be computed or at least adjusted during the reconstruction process, led to essential improvements based on more sophisticated optimization models for MR imaging, see e.g. \cite{Block,uecker08,Ramani,Chen,Allison,Muck,uecker17,ENLIVE,Fessler,framelets}. 
These models employ suitable constraints on the sensitivity functions $s^{(j)}$ or the magnetization image $m$.
Furthermore, other sampling grids  as e.g. radial sparse MRI \cite{Feng}, or spirals \cite{Sheng09} can be  incorporated directly into some of these  optimization approaches.

Finally, within recent years, deep learning methods conquered the research area, see e.g.\ \cite{Hammernik,Jacob,Knoll,Wang,IMJENSE,SPICER}. In particular, in \cite{IMJENSE}, the neural network is based on modeling the sensitivities as algebraic polynomials.
While achieving very good reconstruction results, these methods,  however, still need to be better understood in order to avoid artificial structures which are not due to the data but to the training process, see e.g.\ \cite{Muckley20}, Figure 6.

\subsection*{Organization of the Paper}
In Section 2, we  
introduce the discrete model for the $k$-space data on a cartesian grid and the bivariate trigonometric polynomial model for the coil sensitivities $s^{(j)}$. 
We also present a generalized  model that allows to incorporate the sum-of-squares (sos) condition, which is usually taken to handle the occurring ambiguities.
In Section 3, we introduce the new MOCCA algorithm (see Algorithm \ref{alg2}) for parallel MRI reconstruction from incomplete $k$-space data.
The algorithm consists of two steps. In the first step, the model parameters for the coil sensitivities $s^{(j)}$ are recovered from the $k$-space data in the  ACS region by computing  the nullspace vector of the MOCCA matrix.
In the second step,  the magnetization image $\mm$ 
is computed by solving a least-squares problem. 
We propose an iterative algorithm (see Algorithm  \ref{alg3}) which provides a solution with minimal $2$-norm. 
If the incomplete acquired data follow special patterns, we show that the large equation system arising from the least-squares problem to recover $\mm$ falls apart into  many small systems, such that the computational effort can be strongly reduced by Algorithm \ref{alg4}.
The complete MOCCA algorithm has a complexity of ${\mathcal O}(N_c N^2 \log N)$.
To improve the MOCCA reconstruction result, we propose  a  nonlinear local smoothing scheme as a post-processing step.
In Section 4 we analyze  the new MOCCA reconstruction algorithms. 
We present uniqueness results for the case when the data exactly satisfy the model, show the convergence of the Algorithm \ref{alg3}, derive Algorithm \ref{alg4}, and  provide conditions that guarantee unique solvability of the least-squares problem in the case of subsampled  $k$-space data.
In Section 5, we present some numerical experiments for MRI data. 
In particular, we compare the reconstruction results of the MOCCA algorithm with several other reconstruction methods.
Conclusions are given in Section 6.

\section{Modeling of  Coil Sensitivities and the Discrete Problem in Parallel MRI}}
\label{sec1}

Throughout the paper, we restrict ourselves to the 2D-case, while all ideas can be directly transferred to 3D.
We assume for simplicity that 
the measurement data are given on a cartesian grid. More exactly, assume that  we have given the data as in (\ref{meas})
for $\nuu \in \Lambda_{N}$ (or for $\nuu$ from a subset $\Lambda_{\mathcal P}$ of $\Lambda_{N}$).
In other words, the \textbf{sampling grid} $\frac{1}{N}\Lambda_{N}$ is a cartesian grid of equidistant points in $[-\frac{1}{2}, \frac{1}{2})^2 $ with cardinality $N^2$.
 
We apply the notation 
${\yy}^{(j)} := (y^{(j)}_{\nuu})_{\nuu \in \Lambda_{N}}$ and  ${\mathbf s}^{(j)} := (s_{\nn}^{(j)})_{\nn \in \Lambda_{N}} = (s^{(j)}(\nn))_{\nn \in \Lambda_{N}}$ for $j=0, \ldots , N_c-1$, and  $\mm :=(m_{\nn})_{\nn \in \Lambda_{N}}= (m(\nn))_{\nn \in \Lambda_{N}}$.
Here and throughout the paper,  the index set $\Lambda_N$ corresponding to an $(N \times N)$-image is identified  with a one-dimensional index set of length $N^2$ for the vectorized image, if this is more appropriate, which is easily recognized from the context.
Discretization of the integral in (\ref{meas}) then yields
\begin{equation}\label{discmodel}
\textstyle {\yy}^{(j)} = {\mathcal F} \, (\mm \circ {\mathbf s}^{(j)}), \qquad j=0, \ldots , N_c-1.
\end{equation}
Here $\mm\, \circ \,{\mathbf s}^{(j)} := (m_{\nn} s_{\nn}^{(j)})_{\nn \in \Lambda_N}$ denotes the pointwise product  
and 
 ${\mathcal F}$ 
 denotes the 
 Fourier operator corresponding to the $2D$-Fourier  transform of $(N \times N)$-matrices. For vectorized images 
 $\mm\, \circ \,{\mathbf s}^{(j)} := \diag ({\mathbf s}^{(j)}) \, \mm = \diag(\mm) \,  {\mathbf s}^{(j)}$,
 ${\mathcal F}={\mathcal F}_{N^{2}} =(\omega_N^{\nuu \cdot \nn})_{\nuu,\nn \in \Lambda_N}$ is  of size $N^{2} \times N^{2}$  with $\omega_N:= {\mathrm e}^{-2\pi {\mathrm i}/N}$ and 
$\omega_N^{\nuu \cdot \nn}= \omega_N^{\nu_1n_1+\nu_2n_2}$ for $\nuu=(\nu_1,\nu_2)$ and $\nn=(n_1,n_2)$, and  can be presented as the Kronecker product of the Fourier matrices $(\omega_N^{k \ell})_{k,\ell=-\frac{N}{2}}^{\frac{N}{2}-1}$.
Model (\ref{discmodel}) is usually the starting point
for subspace methods to reconstruct $\mm$, see e.g. \cite{grappa,spirit,SAKE,ESPIRIT}.

\subsection{Model for Coil Sensitivities}
In practice, the coil sensitivities ${s}^{(j)}$ are also unknown for $j=0, \ldots, N_{c}-1$, 
but we can suppose that these functions are smooth. 
The idea is now to model the $s^{(j)}$ such that they can be reconstructed from a small number of parameters.
Therefore,  we propose to present the coil sensitivities  as bivariate trigonometric polynomials, i.e., $s^{(j)}$ have small support in $k$-space. Let $\Lambda_{L} := \{-n, \ldots , n\} \times \{-n, \ldots , n\}$, i.e., $|\Lambda_{L}| =L^{2}$ with $L=2n+1 \ll N$. Now let 
\begin{equation}\label{sjk}
\textstyle s^{(j)}_{\nn} := s^{(j)}(\nn) = \sum\limits_{\rr \in \Lambda_{L}} c_{\rr}^{(j)} \, \omega_{N}^{-\rr \cdot \nn}, \qquad j =0, \ldots , N_c-1, \; \nn \in \Lambda_{N}. 
 \end{equation}
 In our numerical experiments, it has been sufficient to use $L=5$ (or $L=7$), such that every $s^{(j)}$ is already determined by $25$ (or $49$) parameters $c_{\rr}^{(j)}$.
Let $\WW := ({\omega}_N^{-\rr\cdot\nn})_{\nn\in\Lambda_N,\rr\in\Lambda_L}$ denote  a  partial matrix of the inverse (scaled) Fourier matrix $N^2 {\mathcal F}^{-1}$ with only $L^2$ columns corresponding to indices in 
$\Lambda_L$.
Then,  with ${\mathbf c}^{(j)}:= (c_{\rr}^{(j)})_{\rr \in \Lambda_L}$,  we obtain \begin{equation}\label{sj}
 {\mathbf s}^{(j)} := (s^{(j)}_{\nn})_{\nn \in \Lambda_N} = \WW \, \cc^{(j)}, \qquad j=0, \ldots, N_c-1. 
 \end{equation}
 In other words, filling up $(c_{\rr}^{(j)})_{\rr \in \Lambda_L}$ by zeros to get $\tilde{\mathbf c}^{(j)} = (\tilde{c}_{\rr}^{(j)})_{\rr \in \Lambda_N}$ with $\tilde{c}_{\rr}^{(j)}= c_{\rr}^{(j)}$ for $\rr \in \Lambda_L$ and $\tilde{c}_{\rr}^{(j)}=0$ otherwise, we have ${\mathbf s}^{(j)} = N^2 {\mathcal F}^{-1} \tilde{\mathbf c}^{(j)}$.
 \smallskip

 \noindent
\subsection{Acquired $k$-space Measurements}

To accelerate the acquisition time, only a subset of all $k$-space measurements $(y_{\nuu}^{(j)})_{\nuu \in \Lambda_N}$, $j=0, \ldots , N_c-1$,  is acquired. As most of the reconstruction methods, we however require complete $k$-space measurements  within the neighborhood  of the $k$-space center, see Figure \ref{figure0}.
Let $\Lambda_M$  be a centered $M \times M$ index set, i.e.,
$ \textstyle \Lambda_M := \{ -\lfloor \frac{M}{2} \rfloor, \ldots , \lfloor \frac{M-1}{2} \rfloor \} \times \{ -\lfloor \frac{M}{2} \rfloor, \ldots , \lfloor \frac{M-1}{2} \rfloor \} \subset \Lambda_N
$
(with $\lfloor x\rfloor = \max\{n \in {\mathbb Z}: \, n \le x \}$ being the largest integer smaller than of equal to $x$).
We assume that 
 $N \gg M \ge  L$, and that the set of $k$-space measurements $\{y^{(j)}_{\nuu-\rr}: \, \nuu \in \Lambda_M, \,  \rr \in \Lambda_L\}$  is completely acquired
such that the entries  of 
$$\YY_{M,L}^{(j)} := (y_{\nuu-\rr}^{(j)})_{\nuu \in \Lambda_M, \rr \in \Lambda_L}$$ 
are well determined for each $j \in \{0, \ldots , N_c-1\}$.
In other words,  $y^{(j)}_{\nuu}$ are assumed to be given  for  all 
$\nuu \in \Lambda_{M+L-1} = \{  -\lfloor \frac{M}{2} \rfloor-n, \ldots , \lfloor \frac{M-1}{2} \rfloor+n \} \times \{ -\lfloor \frac{M}{2} \rfloor-n, \ldots , \lfloor \frac{M-1}{2}\rfloor +n\}$, 
i.e. $\Lambda_{M+L-1}$ has to be contained in
 the  \textbf{autocalibration signal (ACS) region}, see Figure \ref{figure0}.

\noindent
Beside the measurements from  $\Lambda_{M+L-1}$ we assume that  further measurements $y^{(j)}_{\nuu}$ for $\nuu \in \Lambda_N \setminus \Lambda_{M+L-1}$ are acquired, but these measurements may be incomplete.
Let ${\mathcal P}$ denote the projection operator that provides the acquired measurements for each  coil. Then
$${\mathcal P} \yy^{(j)} = {\mathcal P} \, {\mathcal F} \, (\mm \circ {\mathbf s}^{(j)}) $$
denote the given measurements for $j=0, \ldots , N_c-1$, where zeros are inserted in case of unacquired data.
The subset of $\Lambda_N$ of indices corresponding to acquired  measurements is called $\Lambda_{\mathcal P} \subset \Lambda_N$.

\begin{figure}
\centering
\includegraphics[scale=0.3]{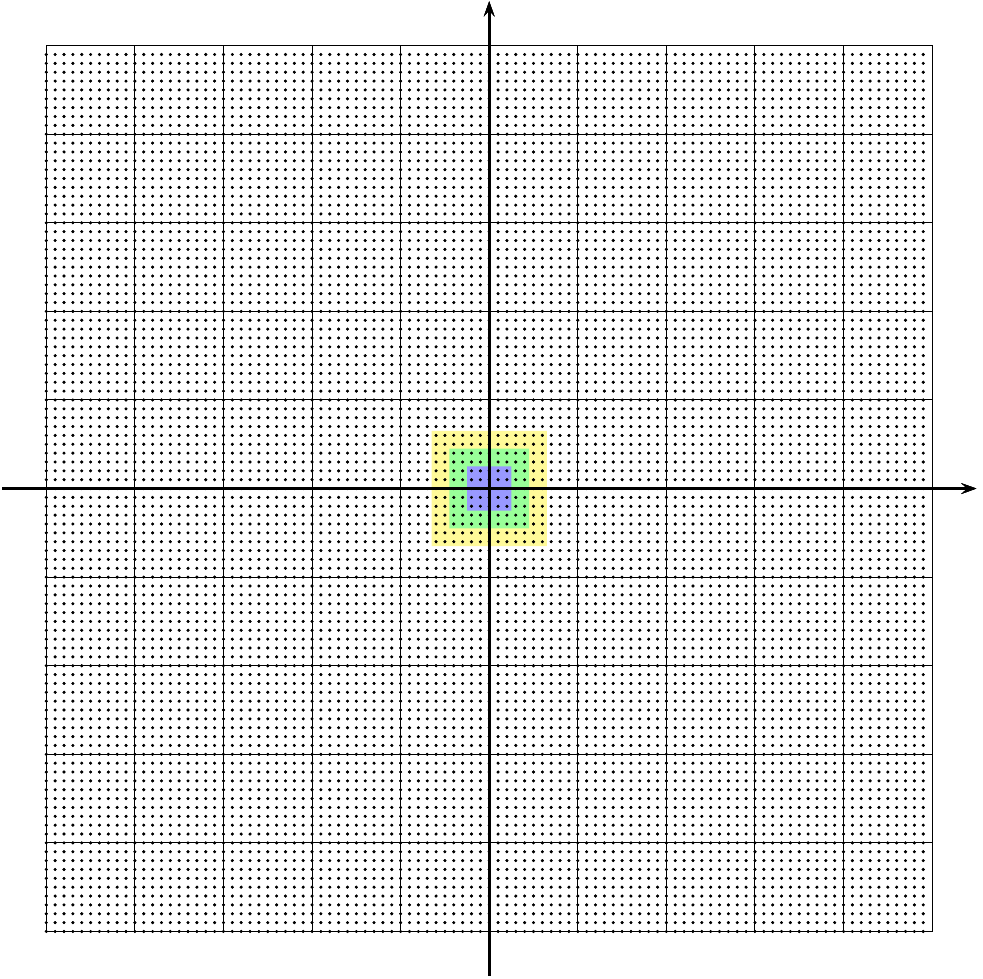}
\caption{Illustration of the grid $\Lambda_N$ (with $N=100$), and the subgrids $\Lambda_L  \subset \Lambda_M \subset \Lambda_{M+L-1}$, the blue subgrid $\Lambda_L$ (with $L=5$), the  green subgrid $\Lambda_M$ (with $M=9$), and the yellow  subgrid  $\Lambda_{M+L-1}$, which needs to be part of the ACS region.}
\label{figure0}
\end{figure}

\medskip

\subsection{Problem Statement} 
Using  the discrete model (\ref{discmodel}) together with (\ref{sjk}), we will solve the following reconstruction problem:\\
For a  given subset  ${\mathcal P} \yy^{(j)}$, $j=0, \ldots, N_c-1$ of $k$-space data,  find  $\mm=(m_{\nn})_{\nn \in \Lambda_N}$ and the $N_{c}$ parameter vectors ${\mathbf c}^{(j)} = (c_{\rr}^{(j)})_{\rr \in \Lambda_L} \in {\mathbb C}^{L^2}$ determining ${\mathbf s}^{(j)}$, $j=0, \ldots , N_c-1$, via (\ref{sjk}).
\smallskip

\subsection{Generalized Model and Ambiguities}
\label{ambi}
Model  (\ref{sjk}) for the coil sensitivities can simply be generalized to
\begin{equation}\label{models1}
 \tilde{s}^{(j)}_{\nn}:=  \gamma_{\nn} \, s^{(j)}_{\nn}  = \gamma_{\nn}  \sum\limits_{\rr \in \Lambda_{L}} c_{\rr}^{(j)} \, \omega_{N}^{-\rr \cdot \nn} \qquad j = 0, \ldots , N_c-1,\; \nn \in \Lambda_N,
 \end{equation}
where $s^{(j)}_{\nn}$ in (\ref{sjk}) is multiplied with the (nonvanishing) sample $\gamma_{\nn}= \gamma(\nn)$ of a smooth window function $\gamma$, and where $\gamma$ does not depend on $j$.
Then, (\ref{discmodel})  can be rewritten as
\begin{align} \label{general} \yy^{(j)} = 
{\mathcal F} 
({\mm}  \circ {\mathbf s}^{(j)}) 
= {\mathcal F} 
(\mm\circ {\boldsymbol\gamma}^{-1} \circ  {\boldsymbol\gamma} \circ {\mathbf s}^{(j)}) 
={\mathcal F} (\tilde{\mm} \circ  \tilde{\mathbf s}^{(j)}) \end{align}
with ${\boldsymbol\gamma}:=(\gamma_{\nn})_{\nn \in \Lambda_N}$, ${\boldsymbol\gamma}^{-1}:=(\gamma_{\nn}^{-1})_{\nn \in \Lambda_N}$,
 $\tilde{\mathbf s}^{(j)} := {\mathbf s}^{(j)} \circ  {\boldsymbol\gamma}$, and $\tilde{\mm} := \mm \circ  {\boldsymbol\gamma}^{-1}$. In other words, having found $\mm$ and ${\mathbf s}^{(j)}$, $j=0, \ldots , N_c-1$, satisfying (\ref{discmodel}) and (\ref{sjk}) for all acquired data, we obtain many further solutions $\tilde{\mm}$ and $\tilde{\mathbf s}^{(j)}$ of (\ref{discmodel}), if we apply the more general model (\ref{models1}). \\
 Analogously as in \cite{Lobos23}, Section II, the factors  $\gamma_{\nn}$ in (\ref{models1})
can be viewed as  ambiguity factors.
In our ''classical'' MOCCA  model (\ref{sjk}), this ambiguity problem is resolved by fixing $\gamma_{\nn}=1$ for $\nn \in \Lambda_N$, such that $s_{\nn}^{(j)}$ are indeed samples of bivariate trigonometric polynomials. Then we are  left with only one global unavoidable ambiguity factor $\mu \in {\mathbb C} \setminus \{0\}$, since for obtained $\mm$ and ${\mathbf s}^{(j)}$ we also find the solution $\frac{1}{\mu}\, \mm$ and $\mu \, {\mathbf s}^{(j)}$, $j=0, \ldots , N_c-1$. 
Applying the generalized model (\ref{models1}), the ambiguities arising from the discretization (\ref{discmodel}) will be  resolved by employing the sum-of-squares (sos) condition, as in most reconstruction algorithms.

\subsection{Sum-of-Squares Condition}
\label{sec:sos}
Many recovery algorithms in parallel MRI (see e.g.\ \cite{grappa,Larsson,ESPIRIT}) employ the following strategy to recover $\mm$ from the (incomplete) acquired data ${\mathcal P}{\mathbf y}^{(j)}$. In a first step, the unacquired $k$-space data $({y}_{\nuu}^{(j)})_{\nuu \in \Lambda_N \setminus \Lambda_{\mathcal P}}$ are approximated from the acquired data $({y}_{\nuu}^{(j)})_{\nuu \in \Lambda_{\mathcal P}}$. Having the complete data  $\yy^{(j)}$ in $k$-space, one applies the inverse Fourier transform $\check{\yy}^{(j)} = {\mathcal F}^{-1} \yy^{(j)}$, $j=0, \ldots , N_c-1$, and computes 
the components $m_{\nn}$ of ${\mm}$ using the sos condition,
\begin{align}\label{sos1} \textstyle m_{\nn} := \big(\sum\limits_{j=0}^{N_c-1} |\check{y}_{\nn}^{(j)}|^2\big)^{\frac{1}{2}}, \qquad \nn \in \Lambda_N. 
\end{align}
At the first glance, this procedure  completely disregards the sensitivity vectors ${\mathbf s}^{(j)}$. We shortly explain, how (\ref{sos1}) relates to our setting using the generalized model (\ref{models1}).
For $\mm=(m_{\nn})_{\nn\in \Lambda_N}$ and ${\mathbf s}^{(j)}= (s^{(j)}_{\nn})_{\nn \in \Lambda_N}$, 
satisfying (\ref{discmodel}) and (\ref{sjk})  for $j=0, \ldots , N_c-1$, we define 
\begin{align} \label{gamma}  \textstyle \gamma_{\nn} :=  \left\{ \begin{array}{ll} 
\alpha_{\nn} \big(\sum\limits_{j=0}^{N_c-1} |s^{(j)}_{\nn}|^2 \big)^{-1/2},  & \nn \in  \Lambda_N,  \, \sum\limits_{j=0}^{N_c-1} |s^{(j)}_{\nn}|^2 >0, \\
0 & \text{otherwise}, \end{array} \right.
\end{align}
with $\alpha_{\nn} := \sign(m_{\nn}) \in {\mathbb C}$ with $\sign(m_{\nn}) := \frac{m_{\nn}}{|m_{\nn}|}$ for $ |m_{\nn}| \neq 0$ and $\sign(m_{\nn}) :=0$ for $ |m_{\nn}| = 0$.
Then, we obtain for $\tilde{\mathbf s}^{(j)} ={\boldsymbol\gamma} \circ {\mathbf s}^{(j)}= (\gamma_{\nn}s_{\nn}^{(j)})_{\nn \in \Lambda_N}$ the relation 
$\sum_{j=0}^{N_c-1} |\tilde{s}_{\nn}^{(j)} |^2 = 1$ for $\gamma_{\nn} \neq 0$, 
and $\sum_{j=0}^{N_c-1} |\tilde{s}_{\nn}^{(j)} |^2 = 0$ for $\gamma_{\nn} =0$, where $\tilde{\mathbf s}^{(j)}$ satisfies the generalized model (\ref{models1}).
According to (\ref{general}), we  set $\tilde{m}_{\nn} = \frac{1}{\gamma_{\nn}} m_{\nn}$ for $\gamma_{\nn}\neq 0$ and $\tilde{m}_{\nn}=0$ for $\gamma_{\nn}= 0$, 
such that $\check{\yy}^{(j)} = {\mathcal F}^{-1} {\mathbf y}^{(j)} = \tilde{\mm} \circ \tilde{\mathbf s}^{(j)}$.  Then we conclude for all $\nn \in \Lambda_N$ with $\gamma_{\nn} \neq 0$
\begin{align*} \textstyle \tilde{m}_{\nn} &= \textstyle \frac{\sign(m_{\nn}) |m_{\nn}|}{\gamma_{\nn}} = \frac{\sign(m_{\nn})}{\alpha_{\nn}} \big(\sum\limits_{j=0}^{N_c-1} |{s}_{\nn}^{(j)} |^2\big)^{1/2} |m_{\nn}|
= |m_{\nn}|  \big(\sum\limits_{j=0}^{N_c-1} |{s}_{\nn}^{(j)} |^2\big)^{1/2} \\
& \textstyle =
 \big(\sum\limits_{j=0}^{N_c-1} |{s}^{(j)}_{\nn} {m}_{\nn}|^2 \big)^{\frac{1}{2}} = \big(\sum\limits_{j=0}^{N_c-1} |\check{y}_{\nn}^{(j)}|^2 \big)^{\frac{1}{2}}.
\end{align*} 
Hence, the sos solution (\ref{sos1}) can be  obtained from our generalized model (\ref{models1}) by choosing $\gamma_{\nn}$ as in (\ref{gamma}).
This observation shows that the application of the sos condition is another way to resolve the ambiguity problem in the discrete parallel MRI setting. Finally, we note that $m_{\nn}$ can only  be determined if $\sum_{j=0}^{N_c-1} |s_{\nn}^{(j)}|^2>0$.
\begin{remark}
In subspace algorithms as \cite{ESPIRIT} and \cite{Lobos23}, where the sensitivities are computed from the $k$-space data in the ACS region in a first step by solving separate eigenvalue problems at every pixel index $\nn$, the sos condition is inherently employed by computing  normalized eigenvectors $(s_{\nn}^{(j)})_{j=0}^{N_c-1}$. However, then still the phase factors for every $\nn \in \Lambda_N$ need to be suitably chosen. This problem is avoided in our approach by taking the model (\ref{sjk}) or (\ref{models1}).
\end{remark}

\section{MOCCA Algorithm for Reconstruction from Incomplete $k$-space Data}
\label{sec:incomplete}

In this section we will derive the MOCCA algorithm that  reconstructs $\mm = (m_{\nn})_{\nn \in \Lambda_N}$ and the sensitivities
${\mathbf s}^{(j)}$ (which are by (\ref{sjk}) already determined by 
${\mathbf c}^{(j)}= (c_{\rr}^{(j)})_{\rr \in \Lambda_L}$)  from the acquired subset ${\mathcal P}{\yy}^{(j)}$, $j=0, \ldots , N_c-1$. As seen before, (\ref{discmodel}) with (\ref{sjk}) requires  to determine only $N^2 + L^2 N_c$ parameters.
Therefore, we should be able to recover these parameters from incomplete $k$-space measurements.
\smallskip

\subsection{MOCCA Algorithm}
\label{sec:Mocca}
Our approach consists of two steps.
 First we recover the sensitivities ${\mathbf s}^{(j)}$ from the acquired $k$-space data in the  (ACS) region using only the discretization model (\ref{discmodel}) and the sensitivity model (\ref{sjk}). That is, we have to recover 
${\mathbf c}^{(j)}$, $j=0, \ldots , N_c-1$, determining  ${\mathbf s}^{(j)}$ via (\ref{sjk}).
In the second step of the algorithm  we will reconstruct $\mm$.
\smallskip

\noindent
\textbf{Step 1: Reconstruction of ${\mathbf c}^{(j)}$.}\\
 First, we  derive a relation between ${\mathbf c}^{(j)}$ and the $k$-space data $\yy^{(j)}$ that we can apply to recover ${\mathbf c}^{(j)}$ only from the ACS  region.

\begin{theorem}\label{theoc}
Assume that  the vectorized coil sensitivities ${\mathbf s}^{(j)} \in {\mathbb C}^{N^2}$ satisfy $(\ref{sjk})$, i.e., ${\mathbf s}^{(j)}= {\mathbf W} {\mathbf c}^{(j)}$ for $j=0, \ldots , N_c-1$, with $\cc^{(j)} \in {\mathbb C}^{L^2}$ as in $(\ref{sj})$.
If  the model $(\ref{discmodel})$ 
is satisfied then 
\begin{align} \label{gross}
\left(\!\!\! \begin{array}{cccc}  
- ( \sum\limits_{\mycom{\ell=0}{\ell\neq 0}}^{N_c-1} {\mathbf Y}_{N,L}^{(\ell)}) & {\mathbf Y}_{N,L}^{(0)} & \ldots & {\mathbf Y}_{N,L}^{(0)} \\
{\mathbf Y}_{N,L}^{(1)} & -( \sum\limits_{\mycom{\ell=0}{\ell\neq 1}}^{N_c-1} {\mathbf Y}_{N,L}^{(\ell)}) & \ldots & {\mathbf Y}_{N,L}^{(1)} \\
\vdots &  &   \ddots & \vdots \\
{\mathbf Y}_{N,L}^{(N_c-1)} &  \ldots &   
{\mathbf Y}_{N,L}^{(N_c-1)} & - (\sum\limits_{\mycom{\ell=0}{\ell\neq N_c-1}}^{N_c-1}{\mathbf Y}_{N,L}^{(\ell)})   \end{array} \!\!\!\right) \left( \!\!\!\begin{array}{c} {\mathbf c}^{(0)} \\ {\mathbf c}^{(1)} \\{\mathbf c}^{(2)} \\ \vdots  \\{\mathbf c}^{(N_c-1)} \end{array} \!\!\!\right) = {\mathbf 0},
\end{align}
where ${\mathbf Y}_{N,L}^{(j)} := ({y}^{(j)} _{(\nuu-\rr) \mod \Lambda_N})_{\nuu \in \Lambda_N, \rr\in \Lambda_L}  \in {\mathbb C}^{N^2 \times L^2}$.
The matrix in $(\ref{gross})$ is called the MOCCA-matrix ${\mathbf A}_N  \in {\mathbb C}^{N^2N_c \times L^2N_c}$, such that $(\ref{gross})$ reads ${\mathbf A}_N {\mathbf c} = {\mathbf 0}$.
\end{theorem}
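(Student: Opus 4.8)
The plan is to use that pointwise multiplication of $\mm$ by a sensitivity in image space corresponds to a \emph{cyclic convolution} in $k$-space, and that by the model $(\ref{sjk})$ the convolving filter is tiny, supported on $\Lambda_L$. Hence every acquired coil signal $\yy^{(j)}$ arises by convolving \emph{one and the same} array, namely the DFT $\hat\mm := \mathcal F \mm$ of $\mm$, with the small filter $\cc^{(j)}$; two such filtered arrays ``commute'', and writing this commutation block-wise is exactly $(\ref{gross})$.

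First I would establish the $k$-space convolution identity. Writing $m_\nn = \frac{1}{N^2}\sum_{\kk\in\Lambda_N}\hat m_\kk\,\omega_N^{-\kk\cdot\nn}$, inserting this together with $(\ref{sjk})$ into $(\ref{discmodel})$, and using the finite Fourier orthogonality relation $\frac{1}{N^2}\sum_{\nn\in\Lambda_N}\omega_N^{\mathbf t\cdot\nn}=1$ if $\mathbf t\equiv\mathbf 0$ and $=0$ otherwise, one gets for every $\nuu\in\Lambda_N$
\[
 y_\nuu^{(j)} \;=\; \sum_{\nn\in\Lambda_N} m_\nn\, s_\nn^{(j)}\,\omega_N^{\nuu\cdot\nn}
 \;=\; \sum_{\kk\in\Lambda_N}\sum_{\rr\in\Lambda_L}\hat m_\kk\,c_\rr^{(j)}\Big(\tfrac{1}{N^2}\!\!\sum_{\nn\in\Lambda_N}\!\!\omega_N^{(\nuu-\kk-\rr)\cdot\nn}\Big)
 \;=\; \sum_{\rr\in\Lambda_L} c_\rr^{(j)}\,\hat m_{(\nuu-\rr) \mod \Lambda_N}.
\]
So $\yy^{(j)}$ is the cyclic convolution over $\Lambda_N$ of $\hat\mm$ with the zero-padded filter $\tilde\cc^{(j)}$.

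Next I would read off the commutation of the blocks. By the definition ${\mathbf Y}_{N,L}^{(j)}=(y^{(j)}_{(\nuu-\rr) \mod \Lambda_N})_{\nuu\in\Lambda_N,\rr\in\Lambda_L}$, the $\nuu$-th component of ${\mathbf Y}_{N,L}^{(j)}\cc^{(\ell)}$ is $\sum_{\rr\in\Lambda_L} y^{(j)}_{(\nuu-\rr) \mod \Lambda_N}\,c_\rr^{(\ell)}$, i.e. ${\mathbf Y}_{N,L}^{(j)}\cc^{(\ell)}=\tilde\cc^{(\ell)}\ast\yy^{(j)}$ (cyclic convolution over $\Lambda_N$). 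Substituting the identity just proved and using commutativity and associativity of cyclic convolution,
\[
 {\mathbf Y}_{N,L}^{(j)}\cc^{(\ell)} \;=\; \tilde\cc^{(\ell)}\ast\yy^{(j)} \;=\; \tilde\cc^{(\ell)}\ast\tilde\cc^{(j)}\ast\hat\mm \;=\; \tilde\cc^{(j)}\ast\yy^{(\ell)} \;=\; {\mathbf Y}_{N,L}^{(\ell)}\cc^{(j)}
\]
for all $j,\ell\in\{0,\ldots,N_c-1\}$. Finally, the $j$-th block row of the MOCCA-matrix $\mathbf A_N$ carries $-\sum_{\ell\neq j}{\mathbf Y}_{N,L}^{(\ell)}$ in block column $j$ and ${\mathbf Y}_{N,L}^{(j)}$ in every other block column, so its product with $\cc=((\cc^{(0)})^T,\ldots,(\cc^{(N_c-1)})^T)^T$ equals
\[
 -\Big(\sum_{\ell\neq j}{\mathbf Y}_{N,L}^{(\ell)}\Big)\cc^{(j)} \;+\; \sum_{k\neq j}{\mathbf Y}_{N,L}^{(j)}\cc^{(k)}
 \;=\; \sum_{k\neq j}\Big({\mathbf Y}_{N,L}^{(j)}\cc^{(k)}-{\mathbf Y}_{N,L}^{(k)}\cc^{(j)}\Big) \;=\; \mathbf 0,
\]
which vanishes for every $j$, hence $\mathbf A_N\cc=\mathbf 0$.

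\emph{Main obstacle.} The only step needing genuine care is the modular index bookkeeping: checking that the wrap-around encoded in $(\nuu-\rr) \mod \Lambda_N$ in the definition of ${\mathbf Y}_{N,L}^{(j)}$ is precisely the cyclic shift produced by the Fourier orthogonality relation, and keeping the $N^2$ normalization consistent between $\WW=N^2\mathcal F^{-1}|_{\Lambda_L}$ and $\mathcal F^{-1}$. Once the convolution identity of the first step is set up correctly, the remaining steps are purely formal block-matrix manipulations.
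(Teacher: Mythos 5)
Your proof is correct and is essentially the paper's argument viewed from the $k$-space side: the paper establishes the commutation in image space as the pointwise-product identity $({\mathbf W}\sum_{\ell\neq j}{\mathbf c}^{(\ell)})\circ\check{\mathbf y}^{(j)}=(\sum_{\ell\neq j}\check{\mathbf y}^{(\ell)})\circ({\mathbf W}{\mathbf c}^{(j)})$ and then applies ${\mathcal F}$, identifying the columns of ${\mathcal F}\,\diag(\check{\mathbf y}^{(j)}){\mathbf W}$ with the shifted samples $y^{(j)}_{(\nuu-\rr)\mod\Lambda_N}$ --- which is exactly your cyclic-convolution bookkeeping. The only organizational difference is that you isolate the cleaner pairwise identity ${\mathbf Y}_{N,L}^{(j)}{\mathbf c}^{(\ell)}={\mathbf Y}_{N,L}^{(\ell)}{\mathbf c}^{(j)}$ before summing over block columns, whereas the paper works with the summed quantities throughout; both rest on the same commutativity of the pointwise product (equivalently, of cyclic convolution under Fourier duality).
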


\begin{proof}
Model (\ref{discmodel})  together with (\ref{sjk}) (resp.\ (\ref{sj})) leads to 
\begin{equation}\label{prod1} \check{\mathbf y}^{(j)} := {\mathcal F}^{-1} {\mathbf y}^{(j)} =  {\mathbf s}^{(j)} \circ \mm =    \mm \circ  ({\mathbf W} {\mathbf c}^{(j)} ) = ( {\mathbf W} {\mathbf c}^{(j)} ) \circ  \mm\end{equation}
for $j=0, \ldots , N_c-1$.  Therefore, we also find
$$
\textstyle \sum\limits_{\mycom{\ell=0}{\ell\neq j}}^{N_c-1}\check{\yy}^{(\ell)}=\sum\limits_{\mycom{\ell=0}{\ell\neq j}}^{N_c-1}  ( {\mathbf W} {\mathbf c}^{(\ell)} ) \circ  \mm =( {\mathbf W} \, \sum\limits_{\mycom{\ell=0}{\ell\neq j}}^{N_c-1} {\mathbf c}^{(\ell)}) \circ \mm$$
and it follows from  (\ref{prod1}) that 
\begin{align}\nonumber
{\check{\yy}}^{(j)} \circ ( {\mathbf W} \, \sum\limits_{\mycom{\ell=0}{\ell\neq j}}^{N_c-1} {\mathbf c}^{(\ell)})
 &=   \textstyle  ({\mathbf W} \, \sum\limits_{\mycom{\ell=0}{\ell\neq j}}^{N_c-1} {\mathbf c}^{(\ell)}) \circ  \check{\yy}^{(j)} 
= ({\mathbf W} \, \sum\limits_{\mycom{\ell=0}{\ell\neq j}}^{N_c-1} {\mathbf c}^{(\ell)}) \circ ({\mathbf W} \, {\cc}^{(j)} )\circ \mm \\
\label{reljj}
&= \textstyle ({\mathbf W} \, \sum\limits_{\mycom{\ell=0}{\ell\neq j}}^{N_c-1} {\mathbf c}^{(\ell)}) \circ \mm \circ ({\mathbf W} {\cc}^{(j)})
= (\sum\limits_{\mycom{\ell=0}{\ell\neq j}}^{N_c-1}\check{\yy}^{(\ell)}) \circ ({\mathbf W} \, {\cc}^{(j)}).
\end{align}
This is equivalent with
$$ - \diag \Big(\sum\limits_{\mycom{\ell=0}{\ell\neq j}}^{N_c-1}\check{\yy}^{(\ell)} \Big)\,  {\mathbf W}  {\cc}^{(j)} +
\diag ({\check{\yy}}^{(j)} ) \, {\mathbf W} \,  \Big(\sum\limits_{\mycom{\ell=0}{\ell\neq j}}^{N_c-1} {\mathbf c}^{(\ell)} \Big) = {\mathbf 0}, \qquad j=0, \ldots, N_c-1. $$
Application of  the $2D$-Fourier matrix  ${\mathcal F}$ to the vectorized images yields
\begin{equation}\label{eigc0} \textstyle 
 [ - {\mathcal F} \, \diag(\sum\limits_{\mycom{\ell=0}{\ell\neq j}}^{N_c-1}\check{\yy}^{(\ell)}) \, {\mathbf W}, \, {\mathcal F} \, \diag({\check{\yy}}^{(j)})\, {\mathbf W}] \, \left( \begin{array}{c} {\cc}^{(j)} \\ \sum\limits_{\mycom{\ell=0}{\ell\neq j}}^{N_c-1} {\mathbf c}^{(\ell)} \end{array} \right) = {\mathbf 0}, \qquad j=0, \ldots , N_c-1.
 \end{equation}
For the ${\rr}$-th column of the matrix ${\mathcal F} \, \diag( \check{\yy}^{(j)}) {\mathbf W} \in {\mathbb C}^{N^2 \times L^2} $ we observe that 
\begin{align} \nonumber \textstyle 
{\mathcal F}\,  \diag( \check{\yy}^{(j)}) (\omega_N^{-\nn\cdot \rr})_{\nn \in \Lambda_N} &=(\omega_N^{\nuu\cdot \nn})_{\nuu, \nn \in \Lambda_N} \,  (\omega_N^{-\nn\cdot \rr} \check{y}^{(j)}_{\nn})_{\nn \in \Lambda_N} \\
\label{projectR}
&= \textstyle \big(\sum\limits_{\nn \in \Lambda_N} \check{y}^{(j)}_{\nn} \, 
\omega_N^{\nn\cdot (\nuu-\rr)} \big)_{\nuu \in \Lambda_N}
= \big({y}^{(j)}_{(\nuu-\rr) \mod \Lambda_N}\big)_{\nuu \in \Lambda_N} .
\end{align}
Hence, the $\nuu$-th row of ${\mathcal F} \, \diag( \check{\yy}^{(j)}) {\mathbf W}$ contains the $L^2$ components $y^{(j)}_{(\nuu-\rr) \mod \Lambda_N}$ for $\rr \in \Lambda_L$, i.e., all components of $\yy^{(j)}$ in the $(L\times L)$-block around pixel $\nuu$ in the matrix representation of $\yy^{(j)}$ (using periodic boundary conditions).
Thus, (\ref{eigc0}) implies for $j=0, \ldots , N_c-1$,
\begin{align}\label{eigc1}
  \Big(  -\big(\sum\limits_{\mycom{\ell=0}{\ell\neq j}}^{N_c-1}y^{(\ell)}_{(\nuu-\rr) \mod \Lambda_N}\big)_{\nuu \in \Lambda_N, \rr\in \Lambda_L}, ({y}^{(j)} _{(\nuu-\rr) \mod \Lambda_N})_{\nuu \in \Lambda_N, \rr\in \Lambda_L}  \Big)
\, \left( \begin{array}{c} {\cc}^{(j)} \\  \sum\limits_{\mycom{\ell=0}{\ell\neq j}}^{N_c-1} {\mathbf c}^{(\ell)} \end{array} \right) = {\mathbf 0}.
\end{align}
Equivalently, with 
${\mathbf Y}_{N,L}^{(j)} := ({y}^{(j)} _{(\nuu-\rr) \mod \Lambda_N})_{\nuu \in \Lambda_N, \rr\in \Lambda_L}  \in {\mathbb C}^{N^2 \times L^2}$ we obtain (\ref{gross}).
\end{proof}

If the nullspace of ${\mathbf A}_N$ is one-dimensional, then $\cc$  is uniquely determined up to a global constant.
Obviously, it is sufficient  to use  a much smaller number of rows of the matrix ${\mathbf A}_N$ above to recover $\cc$. 
Therefore, we just need to  employ the given $k$-space data from the ACS region and consider only  the matrix blocks
${\mathbf Y}_{M,L}^{(j)} := ({y}^{(j)} _{(\nuu-\rr) \mod \Lambda_N})_{\nuu \in \Lambda_M, \rr\in \Lambda_L}  \in {\mathbb C}^{M^2 \times L^2}$.
We construct the MOCCA-matrix ${\mathbf A}_M \in {\mathbb C}^{M^2 N_c \times L^2 N_c}$ similarly as in (\ref{gross}), where the block matrices ${\mathbf Y}_{N,L}^{(j)}$ are replaced by their submatrices ${\mathbf Y}_{M,L}^{(j)}$ with only $M^2$ rows. 
 If the  nullspace of ${\mathbf A}_M$ is still one-dimensional, then  ${\mathbf A}_M \, \cc = {\mathbf 0}$  can be solved, and ${\mathbf c}$ is uniquely determined up to a  global (complex) factor.
We choose $\cc$ with $\|\cc\|_2=1$. 
Then, we find the coil sensitivity vectors ${\mathbf s}^{(j)} = \WW \, \cc^{(j)}$, $j=0, \ldots, {N_c-1}$.
\smallskip

\noindent
\textbf{Step 2: Reconstruction of $\mm$.}\\
Having determined ${\mathbf s}^{(j)}$, we compute $\mm$ using the SENSE approach \cite{Pruess}.
In a first step, we normalize ${\mathbf s}^{(j)}$. 
We apply now the generalized model (\ref{models1}) and compute $\tilde{\mathbf s}^{(j)} = {\boldsymbol\gamma} \circ {\mathbf s}^{(j)}$ with ${\boldsymbol\gamma} = (\gamma_{\nn})_{\nn \in \Lambda_N}$ as in (\ref{gamma}),
where $\alpha_{\nn}:= 1$. We obtain
\begin{align}\label{sjn} 
\tilde{s}_{\nn}^{(j)} :=  \left\{ \begin{array}{ll}\Big( \sum\limits_{\ell=0}^{N_c-1} |s_{\nn}^{(\ell)}|^2\Big)^{-\frac{1}{2}}  s_{\nn}^{(j)}&  
\text{if} \;  \sum\limits_{\ell=0}^{N_c-1} |s_{\nn}^{(\ell)}|^2 > \epsilon, \\
 0  & \text{otherwise,} \end{array}\right.
\end{align}
where theoretically $\epsilon =0$, and numerically $\epsilon$ is a small threshold larger than $0$.
Then, model (\ref{discmodel}) still holds in the form 
$ {\mathbf y}^{(j)} = {\mathcal F} (\tilde{\mathbf s}^{(j)} \circ \tilde{\mathbf m})$ for $j=0, \ldots , N_c-1$  with $\tilde{\mm} := {\boldsymbol \gamma}^{-1} \circ \mm$. 
To reconstruct $\tilde{\mm}$, we solve the  minimization problem
\begin{align}\label{minm} \textstyle  \tilde{\mm} := \argmin\limits_{{\mm} \in {\mathbb C}^{N^2}}  \left( \sum\limits_{j=0}^{N_c-1} \|  {\mathcal P} {\yy}^{(j)} - ({\mathcal P} \, {\mathcal F} \, \diag (\tilde{\mathbf s}^{(j)}))\,  {\mm} \|_2^2 \right),
\end{align}
which leads to the linear system 
\begin{align}\label{lins}  \textstyle  \Big(\sum\limits_{j=0}^{N_c-1} ({\mathbf B}^{(j)})^* \, {\mathbf B}^{(j)} \Big) \, \tilde{\mm} = \sum\limits_{j=0}^{N_c-1}
({\mathbf B}^{(j)})^* \,  {\mathcal P} {\mathbf y}^{(j)} 
\end{align}
with ${\mathbf B}^{(j)} := {\mathcal P} {\mathcal F} \diag( \tilde{\mathbf s}^{(j)})$.
Thus, $\tilde{\mm}$ is uniquely determined if the positive semidefinite coefficient matrix $\sum_{j=0}^{N_c-1} ({\mathbf B}^{(j)})^* \, {\mathbf B}^{(j)}$ is invertible.
The obtained solution set $(\tilde{\mm}, \tilde{\mathbf s}^{(j)})$ with $\tilde{\mathbf s}^{(j)}$ in (\ref{sjn}) and $\tilde{\mm}$ in (\ref{minm}) now refers to the generalized coil sensitivity model (\ref{models1}) (with $\boldsymbol\gamma$ as in (\ref{gamma})  and $\alpha_{\nn}=1$), while the solution set $(\mm, {\mathbf s}^{(j)})$ with $\mm =  {\boldsymbol\gamma} \circ \tilde{\mm}$
refers to (\ref{sjk}).
Replacing now  the  components of $\tilde{\mm}$ by its modulus,  $|\tilde{\mm}| := (|\tilde{m}_{\nn}|)_{\nn \in \Lambda_N}$, is equivalent with replacing the coefficients $\alpha_{\nn}=1$ by $\alpha_{\nn} = \sign(m_{\nn})$ as in (\ref{sec:sos}),
such that the resulting solution $|\tilde{\mm}|$ satisfies the sos condition (\ref{sos1}).
Finally, there is still one ambiguity, which can be used to normalize the image $\tilde{\mm}$.

The complete reconstruction algorithm  to compute $(\tilde{\mm}, \tilde{\mathbf s}^{(j)})$ is summarized in Algorithm \ref{alg2}.

\begin{algorithm}\label{alg2}
[MOCCA algorithm for reconstruction of $\mm$ and ${\mathbf s}^{(j)}$] \\
\textbf{Input:} Incomplete $k$-space data ${\mathcal P} {\yy}^{(j)}$, $j=0, \ldots , N_c-1$,  with 
$({\mathcal P} {\yy}^{(j)})_{\nuu} = y_{\nuu}^{(j)}$
for $\nuu \in \Lambda_{\mathcal P}$ and 
$({\mathcal P} {\yy}^{(j)})_{\nuu} = 0$ for $\nuu \in \Lambda_N \setminus \Lambda_{\mathcal P}$.\newline
$L=2n+1$ determining the support of the trig. polynomials $s^{(j)}$ in $(\ref{sjk})$ in $k$-space.\newline
$\Lambda_{M+L-1} \subset \Lambda_{\mathcal P}  \subset \Lambda_N$,  where $M \ge L$. 
Regularization parameter $\beta \ge 0$. \\
\begin{enumerate}
\item  Build  the matrices ${\mathbf Y}_{M,L}^{(j)} := ({y}^{(j)} _{(\nuu-\rr) \mod \Lambda_N})_{\nuu \in \Lambda_M, \rr\in \Lambda_L}$, $j=0, \ldots , N_c-1$, and 
\begin{align} \label{AM}
{\mathbf A}_M:=\left( \!\!\! \begin{array}{cccc}  
- (\sum\limits_{\mycom{\ell=0}{\ell \neq 0}}^{N_c-1} {\mathbf Y}_{M,L}^{(\ell)}) & {\mathbf Y}_{M,L}^{(0)}  \!\!\!& \ldots & {\mathbf Y}_{M,L}^{(0)} \\
{\mathbf Y}_{M,L}^{(1)} &  \!\!\!-( \sum\limits_{\mycom{\ell=0}{\ell \neq 1}}^{N_c-1} {\mathbf Y}_{M,L}^{(\ell)}) \!\!\! &  \ldots & {\mathbf Y}_{M,L}^{(1)} \\
\vdots &    & \ddots & \vdots \\
{\mathbf Y}_{M,L}^{(N_c-1)} &  \ldots & 
   {\mathbf Y}_{M,L}^{(N_c-1)} & \!\!\! - (\sum\limits_{\mycom{\ell=0}{\ell \neq N_c-1}}^{N_c-1} {\mathbf Y}_{M,L}^{(\ell)})    \end{array} \!\!\!\right).
\end{align}
\item Compute a singular vector ${\mathbf c} \in {\mathbb C}^{N_c L^2}$  of ${\mathbf A}_M$ of norm $1$ corresponding to the smallest singular value of ${\mathbf A}_M$. 
\item Extract the vectors ${\mathbf c}^{(j)}$, from  $\cc= ((\cc^{(0)})^T,  (\cc^{(1)})^T, \ldots , (\cc^{(N_c-1)})^T)^T$. \\
Compute ${\mathbf s}^{(j)} = {\mathbf W} \, \cc^{(j)}$, $j=0, \ldots , N_c-1$ by FFT, where ${\mathbf W} := (\omega_N^{-\nn \cdot \rr})_{\nn \in \Lambda_N, \rr \in \Lambda_L}$.
\item Compute ${\mathbf d} = (d_{\nn})_{\nn \in \Lambda_N}:= \sum_{j=0}^{N_c-1} \overline{{{\mathbf s}^{(j)}}}  \circ {{\mathbf s}^{(j)}} $, where $\circ$ is the pointwise product.
Set ${\mathbf d}^{+} = (d_{\nn}^+)_{\nn\in \Lambda_N}$ with $d_{\nn}^+ := \frac{1}{d_{\nn}}$ for $|d_{\nn}| > \epsilon$ and $d_{\nn}^+ =0$ otherwise.
\item  For $j=0:N_c-1$ compute
$\tilde{\mathbf s}^{(j)} := ({\mathbf d}^+)^{\frac{1}{2}} \circ  {\mathbf s}^{(j)} = 
(\sqrt{d}^+_{\nn} s_{\nn}^{(j)})_{\nn \in \Lambda_N}$.
\item
Solve the equation system 
\begin{align}\label{riesig}  \textstyle  \Big(  \beta \,  {\mathbf I} + \sum\limits_{j=0}^{N_c-1} ({\mathbf B}^{(j)})^* {\mathbf B}^{(j)} \Big) \, \tilde{\mm} = 
\sum\limits_{j=0}^{N_c-1} ({\mathbf B}^{(j)})^* \,  {\mathcal P} \,  {\yy}^{(j)}
\end{align}
with ${\mathbf B}^{(j)} := {\mathcal P} {\mathcal F} \, \diag (\tilde{\mathbf s}^{(j)})$, either iteratively or directly,
to compute $\tilde{\mm}$.
\item Set  $\tilde{\mathbf s}^{(j)} := (\sign(\tilde{m}_{\nn}) \tilde{s}^{(j)}_{\nn})_{\nn \in \Lambda_N}$ for $j=0, \ldots , N_c-1$ and 
 $\tilde{\mm}:=(|\tilde{m}_{\nn}|)_{\nn \in \Lambda_N}$.
 \end{enumerate}
\textbf{Output: } $(\tilde{\mm}$, $\tilde{\mathbf s}^{(j)}),$ 
$j=0, \ldots , N_c-1$ satisfying $(\ref{models1})$, $(\ref{general})$, and $(\ref{sos1})$.
\end{algorithm}

\noindent
\begin{remark}
1. To compute the singular vector to the smallest singular value $\sigma_{\min}$ of ${\mathbf A}_M$
one can employ an inverse power iteration to ${\mathbf A}_M^*{\mathbf A}_M$ which converges fast in our case.\\
2. Beside the MOCCA matrix ${\mathbf A}_M$ in $(\ref{AM})$, there exist many other  matrices that serve the same purpose to 
determine the vectors $\cc = ({\mathbf c}^{(j)})_{j=0}^{N_c-1}$. 
Taking for example
$$ {\mathbf A}_{M,0} := \left( \begin{array}{ccccc}  
-  {\mathbf Y}_{M,L}^{(1)} & {\mathbf Y}_{M,L}^{(0)} & {\mathbf 0}& \ldots &{\mathbf 0} \\
{\mathbf 0} & - {\mathbf Y}_{M,L}^{(2)} & {\mathbf Y}_{M,L}^{(1)}  & \ldots & {\mathbf 0} \\
\vdots &  &  & \ddots & \vdots \\
{\mathbf Y}_{M,L}^{(N_c-1)} &  \ldots & {\mathbf 0}  &  {\mathbf 0} & -  {\mathbf Y}_{M,L}^{(0)}   \end{array} \right),$$
we conclude similarly as in  $(\ref{reljj})$  that ${\mathbf A}_{M,0} {\mathbf c} = {\mathbf 0}$. \\
3. Relations of the type $\check{\mathbf y}^{(j)} \circ {\mathbf s}^{(\ell)} - \check{\mathbf y}^{(\ell)} \circ {\mathbf s}^{{(j)}} = {\mathbf 0}$ for $j, \, \ell \in \{0, \ldots, N_c-1\}$ have been used also in other papers, see e.g.\  $\text{\cite{Harikumar}}$. But without using a suitable model for ${\mathbf s}^{(j)}$, these relations are not sufficient to derive a reconstruction method for ${\mathbf s}^{(j)}$.\\
4. Note that for the special case $\Lambda_{\mathcal P} = \Lambda_N$ (completely acquired $k$-space data), the  coefficient matrix of the system  (\ref{riesig}) simplifies to 
$$ \textstyle \sum\limits_{j=0}^{N_c-1} ({\mathbf B}^{(j)})^* {\mathbf B}^{(j)} \!= \!
\sum\limits_{j=0}^{N_c-1}  \diag(\tilde{\mathbf s}^{(j)})^* {\mathcal F}^*{\mathcal F} \diag(\tilde{\mathbf s}^{(j)}) \!= \!N^2 \sum\limits_{j=0}^{N_c-1}  \diag(\tilde{\mathbf s}^{(j)})^* \diag(\tilde{\mathbf s}^{(j)}) \!= \!N^2 {\mathbf I}_{N^2}, $$
 such that 
$\mm = \frac{1}{N^2}\sum_{j=0}^{N_c-1}  \diag(\tilde{\mathbf s}^{(j)})^* {\mathcal F}^* {\mathbf y}^{(j)} = 
\sum_{j=0}^{N_c-1}  \diag(\tilde{\mathbf s}^{(j)})^* \check{\mathbf y}^{(j)}$. \\
5. To improve the condition number of  the coefficient matrix in (\ref{lins}), 
it can 
be replaced  by the positive definite matrix $\beta \, {\mathbf I} +\sum_{j=0}^{N_c-1} ({\mathbf B}^{(j)})^* \, {\mathbf B}^{(j)} $ with small $\beta >0$ and the  identity matrix ${\mathbf I}$ of size $N^2 \times N^2$. This regularization  is often used  for least square methods, see e.g.\ $\cite{spirit}$. \\
6. In step 2 of the MOCCA algorithm to  reconstruct the magnetization image $\mm$ the 
least squares problem (\ref{minm}) can be simply extended by incorporating a 
 further regularization term for $\mm$ and considering instead  the minimization problem 
\begin{align}\label{mingen}
 \tilde{\mm} = \textstyle \argmin\limits_{{\mm} \in {\mathbb C}^{N^2}}  \left( \sum\limits_{j=0}^{N_c-1} \frac{1}{2} \|  {\mathcal P} {\yy}^{(j)} - ({\mathcal P} \, {\mathcal F} \, \diag (\tilde{\mathbf s}^{(j)})) \, {\mm} \|_2^2  + \lambda \Phi({\mm}) \right).
\end{align}  
Here,  $\Phi(\mm)$ denotes a suitable constraint on the image $\mm$, and $\lambda>0$ is the regularization parameter. As usual in image processing applications, we can for example force that $\mm$ has a sparse representation in a suitably chosen transformed domain, i.e., $\Phi(\mm) := \|{\mathbf T} \mm\|_1$, where ${\mathbf T}$ denotes the transform matrix  and $\| \cdot \|_1$ is the usual 1-norm of a vector. Note that  in  \cite{Keeling}, actually two regularization functionals for $\mm$ have been proposed, including the total variation regularization. In \cite{LiChan16}, a Haar-framelet based transform matrix has been constructed. Both approaches  are computationally rather expensive compared to solving (\ref{minm}).\\
7. Note again that the solution ambiguities presented in Section \ref{ambi} are here resolved by employing the sos condition, i.e., by normalizing the sensitivities in step 5 of Algorithm \ref{alg2}. Furthermore the remaining global ambiguity and the phase ambiguities (see $\alpha_{\nn}$ in (\ref{gamma})) are fixed by choosing $\mm$ with $\|\mm\|_2=1$ and with nonnegative real entries. This choice is taken, since the result is later compared with the ''ground truth" image obtained from (\ref{sos1}).
\end{remark}

\subsection{Complexity of the MOCCA Algorithm}
Algorithm \ref{alg2} requires 
${\mathcal O}(N_c\,  N^2 \log N)$ operations. In step 2, the singular vector corresponding to the smallest singular value of the MOCCA matrix ${\mathbf A}_M$ can be obtained by an SVD of ${\mathbf A}_M$ with at most ${\mathcal O}((N_cM^2)^2 N_cL^2)$ operations (disregarding the block Hankel structure of ${\mathbf A}_M$). We can assume here that  $N_cM^2 L < {\mathcal O}(N \log N)$ such that this effort is less than ${\mathcal O}(N_c\,  N^2 \log N)$. As we will see in the numerical experiments, small numbers $L$ and $M$ can be chosen,  e.g. $L=5$ and $M=20$ such that the needed ACS region is covered by $24$ ACS lines.
The $N_c$ matrix-vector multiplications in step 3 require ${\mathcal O}(N_c\, N^2\, L^2)$ operations.
Step 4 needs ${\mathcal O}(N_c \, N^2)$ operations.  Finally, step 5 is the most expensive step, since it requires to solve a large linear system. However, for regular sampling patterns  ${\mathcal P}$ usually taken in parallel MRI (e.g. each second, third, or fourth row/column is acquired), the structure of the coefficient matrix can be employed to derive fast solvers with ${\mathcal O}(N_c N^2 \log N)$ operations, see Section \ref{sec:incompletefast}.
Furthermore, in Section  \ref{sec:WJR} we  propose an iterative algorithm to solve (\ref{riesig}) efficiently.

\subsection{Efficient Solution of the Least-Squares Problem}

\noindent
Note that the coefficient matrix \linebreak  $\beta\, {\mathbf I} + \sum_{j=0}^{N_c-1} ({\mathbf B}^{(j)})^* \, {\mathbf B}^{(j)} $ (with $\beta \ge 0$) 
in (\ref{riesig}) is of size $N^2 \times N^2$.
Therefore, despite its special structure, it is very large for large $N$.
To solve (\ref{riesig})  efficiently, we propose 
an  iterative algorithm  and a direct algorithm, where the latter one can be applied in case of structured acquired data.
\medskip

\noindent
\subsubsection{Weighted Jacobi-Richardson Iteration}
\label{sec:WJR}

We propose the following simple iteration algorithm to solve (\ref{riesig}).

\begin{algorithm}\label{alg3}
[Weighted Jacobi-Richardson iteration for $(\ref{riesig})$]\newline
\noindent
\textbf{Input:}  ${\mathcal P} \,  {\yy}^{(j)}$ and $\tilde{\mathbf s}^{(j)}$, $j=0, \ldots , N_c-1$,  
from step 2 of Algorithm $\ref{alg2}$, 
$\epsilon >0$, $\beta\ge 0$. 
\begin{enumerate}
\item Set  $\tilde{\mm}_{-1}:= {\mathbf 0} \in {\mathbb C}^{N^2}$, $\yy^{(j)}_0 := {\mathcal P}  {\yy}^{(j)}$ 
and $\kappa:=0$. \newline
Compute 
$\tilde{\mm}_0 :=  \sum\limits_{j=0}^{N_c-1} (\overline{{\tilde{\mathbf s}^{(j)}} } \circ  {\mathcal F}^{-1} \yy^{(j)}_0). $ \\
\item  While $\|\tilde{\mm}_{\kappa} -\tilde{\mm}_{\kappa-1}\|_\infty > \epsilon$ do\newline
Compute 
\begin{align*}
\yy_{\kappa+1}^{(j)} &:= \textstyle  ( (1 - \frac{\beta}{N^2}){\mathbf I} - {\mathcal P}) {\mathcal F} (\tilde{\mathbf s}^{(j)} \circ  \tilde{\mm}_{\kappa}) + \yy^{(j)}_0,  \quad j=0, \ldots , N_c-1, \\
\tilde{\mm}_{\kappa+1} &:=  \sum\limits_{j=0}^{N_c-1} \overline{{\tilde{\mathbf s}^{(j)}} } \circ ( {\mathcal F}^{-1}{\yy}_{\kappa+1}^{(j)}),
\end{align*}
and set $\kappa:=\kappa+1$.
\item Set  $\tilde{\mathbf m}:=\tilde{\mathbf m}_{\kappa}$. 
\end{enumerate}
{\textbf{Output: } $\tilde{\mm}$ solving the system $(\ref{riesig})$. }
\end{algorithm}

\noindent
Each iteration step to compute an improved approximation $\tilde{\mm}_{\kappa}$  in Algorithm \ref{alg3}
requires ${\mathcal O} (N_c N^2 \log N)$ operations for the needed discrete 2D-Fourier transforms.
Thus,  this system can be solved efficiently. In our numerical experiments in Section \ref{sec:num}, we need for example 12 iteration steps for the case if outside the ACS region each second column of data is missing.
We will show in Section \ref{sec:ana2} that  Algorithm \ref{alg3} always converges, even  if   the matrix $\sum_{j=0}^{N_c-1} ({\mathbf B}^{(j)})^* {\mathbf B}^{(j)}$ is  not invertible and $\beta=0$.
Assuming that the model (\ref{discmodel}) is satisfied, the iteration of Algorithm \ref{alg3} then leads to a solution $\tilde{\mm}$ with minimal norm, see Theorem \ref{theoB}.

\begin{remark}
Algorithm $\ref{alg3}$  can be interpreted as  a weighted Jacobi (Richardson) iteration, or  as an alternating  projection  iteration, see Theorem  $\ref{theoB}$. Note that the normalization of the sensitivities is essential for a fast convergence of this iteration scheme.
In Algorithm $\ref{alg3}$, we iteratively compute the vectors $\yy_{\kappa+1}^{(j)}$,  which can be understood as approximations of the $k$-space vectors $\yy^{(j)}$. In other words, the algorithm provides  approximations of the unacquired measurements $y^{(j)}_{\nn}$ for $\nn \in \Lambda_N \setminus \Lambda_{\mathcal P}$. 
 Each further iteration step provides more detail while the image "loses" smoothness.
By contrast to other algorithms for parallel MRI  based on local interpolation of $k$-space data, as e.g.\ GRAPPA \cite{grappa},
these approximations depend on all acquired data  $y^{(j)}_{\nn}$ for $\nn \in \Lambda_{\mathcal P}$, $j=0, \ldots , N_c-1$. 
Of course, Algorithm \ref{alg3} for  $(\ref{riesig})$ can be replaced by another iterative solver, as e.g.\ a  cg-solver. In the MR literature, such iterative solvers are usually called iterative SENSE methods.
 For accelerated convergence in Algorithm \ref{alg3}, one can employ an Alternating Anderson–Richardson method, \cite{Sury}.
\end{remark}

\subsubsection{Direct Fast Algorithm for Structured Incomplete Data}
\label{sec:incompletefast}

\noindent
The computational effort  to recover  $\tilde{\mm}$ in step 3 of Algorithm \ref{alg2} can be strongly reduced for special patterns of acquired data.
Assume for example that $N$ is a multiple of $8$, and $\Lambda_{\mathcal P}$ contains, beside the ACS region $\Lambda_{L+M-1}$, acquired $k$-space data for each fourth column, i.e.,
$\Lambda_{\mathcal P} = \{-\frac{N}{2}, \ldots , \frac{N}{2}-1 \} \times \{-\frac{N}{2}, -\frac{N}{2}+4, \ldots , \frac{N}{2}-8, \frac{N}{2}-4\} \cup \Lambda_{L+M-1}$. In this case, the system (\ref{riesig}) falls apart into $\frac{N}{4}$ systems of size $4 \times 4$ to recover ${\mathbf m}$. This idea can be easily transferred to other structured projection patterns. The algorithm is summarized in Algorithm 
\ref{alg4}.

\begin{algorithm}
[Fast direct solution of (\ref{riesig})]
(if every fourth column of $\yy^{(j)}$ is acquired)\\
\label{alg4}
{\textbf{Input:}  ${\mathcal P} \,  {\yy}^{(j)}$ and $\tilde{\mathbf s}^{(j)}$, $j=0, \ldots , N_c-1$,  from step 2 of Algorithm $\ref{alg2}$, 
$\beta \ge 0$.}
\begin{enumerate}
\item Compute the right-hand side of $(\ref{riesig})$, 
${\mathbf R} = (r_{k,\ell})_{(k,\ell) \in \Lambda_N} :=  \sum\limits_{j=0}^{N_c-1} \overline{\tilde{\mathbf s}^{(j)}} \circ {\mathcal F}^{-1} ({\mathcal P} {\mathbf y}^{(j)})$\\
\quad \ using  2-D FFTs of size $N \times N$.
\item for $k=-\frac{N}{2}:1:\frac{N}{2}-1$ do \newline
\null  \qquad for $\ell=-\frac{N}{8}:1:\frac{N}{8}-1$ do \newline
\null  \qquad \quad  Set $\tilde{\mathbf s}^{(j)}_{k,\ell} := \big(\tilde{s}^{(j)}_{k,\ell- \frac{3N}{8}}, \tilde{s}^{(j)}_{k,\ell- \frac{N}{8}}, 
 \tilde{s}^{(j)}_{k,\ell+ \frac{N}{8}}, \tilde{s}^{(j)}_{k,\ell+ \frac{3N}{8}}\big)^T \in {\mathbb C}^4$ for $j=0, \ldots , N_c-1$. \\
 \null  \qquad \quad  Form the matrix ${\mathbf G}_{k,\ell}:= \frac{4 \beta}{N^2} {\mathbf I}_4+ \sum\limits_{j=0}^{N_c-1}  \overline{\tilde{\mathbf s}^{(j)}_{k,\ell}} \, (\tilde{\mathbf s}^{(j)}_{k,\ell})^T \in {\mathbb C}^{4 \times 4}$. \newline
\null  \qquad \quad  Form the  vector ${\mathbf b}_{k,\ell} := 4 \big( r_{k,\ell-\frac{3N}{8}}, r_{k,\ell-\frac{N}{8}}, r_{k,\ell+\frac{N}{8}}, 
 r_{k,\ell+\frac{3N}{8}} \big)^T \in {\mathbb C}^4$. \newline
 \null  \qquad \quad  Solve  ${\mathbf G}_{k,\ell} \, \mm = {\mathbf b}_{k,\ell}$ and set 
 $\big(\tilde{m}_{k,\ell- \frac{3N}{8}}, \tilde{m}_{k,\ell- \frac{N}{8}}, \tilde{m}_{k,\ell+ \frac{N}{8}}, \tilde{m}_{k,\ell+ \frac{3N}{8}}\big)^T := \mm$. \newline
\null \qquad  end (for)\newline
\null \quad \ end (for)
\end{enumerate}
{\textbf{Output: } $\tilde{\mm}$ solving the system $(\ref{riesig})$. }
\end{algorithm}

\begin{remark} 
1. Having given for example that every second row and every second column is acquired from the $k$-space data ${\mathbf y}^{(j)}$, i.e., $\Lambda_{\mathcal P} \subset \{-\frac{N}{2}, -\frac{N}{2}+2, \ldots , \frac{N}{2}-2 \}^2 \cup \Lambda_{M+L-1}$
(where $N$ is a multiple of $4$), then step 2 of Algorithm \ref{alg4} needs to be replaced by\\
for $k=-\frac{N}{4}:1:\frac{N}{4}-1$ do \newline
\null \quad for $\ell=-\frac{N}{4}:1:\frac{N}{4}-1$ do \newline
\null  \quad \;  Set $\tilde{\mathbf s}^{(j)}_{k,\ell} \!:= \!\big(\tilde{s}^{(j)}_{k- \frac{N}{4},\ell- \frac{N}{4}}, 
 \tilde{s}^{(j)}_{k- \frac{N}{4},\ell+ \frac{N}{4}}, 
 \tilde{s}^{(j)}_{k+ \frac{N}{4},\ell- \frac{N}{4}}, \tilde{s}^{(j)}_{k+ \frac{N}{4},\ell+ \frac{N}{4}}\big)^T\! \in \!{\mathbb C}^4$ for $j=0, \ldots , N_c-1$. \newline
\null  \quad \;  Form the matrix ${\mathbf G}_{k,\ell}:= \frac{4 \beta}{N^2} {\mathbf I}_4+\sum\limits_{j=0}^{N_c-1} {\overline{\tilde{\mathbf s}^{(j)}_{k,\ell}} \, (\tilde{\mathbf s}^{(j)}_{k,\ell})^T} \in {\mathbb C}^{4 \times 4}$. \newline
\null \quad \;  Form the  vector ${\mathbf b}_{k,\ell} := 4 \big( r_{k- \frac{N}{4},\ell- \frac{N}{4}}, r_{k- \frac{N}{4},\ell+ \frac{N}{4}}, r_{k+ \frac{N}{4},\ell- \frac{N}{4}}, 
 r_{k+ \frac{N}{4},\ell+ \frac{N}{4}} \big)^T \in {\mathbb C}^4$. \newline
 \null \quad \;  Solve  ${\mathbf G}_{k,\ell} \, \mm = {\mathbf b}_{k,\ell}$. \newline
  \null \quad \; Set 
 $\big(\tilde{m}_{k- \frac{N}{4},\ell- \frac{N}{4}}, \tilde{m}_{k- \frac{N}{4},\ell+ \frac{N}{4}}, \tilde{m}_{k+ \frac{N}{4},\ell- \frac{N}{4}}, \tilde{m}_{k+\frac{N}{4},\ell+ \frac{N}{4}}\big)^T := \mm$. \newline
\null \quad  end (for)\newline
 \ end (for)
 
\noindent
2. Fast algorithms for special structures of the acquired $k$-space data may be of 
particular interest in the 3D case employing  partially parallel acquisition strategies, see \cite{CAIPI}.

\end{remark}
 
The first step of Algorithm \ref{alg4} requires $N_c$ 2-D FFTs with an effort of ${\mathcal O}(N^2 \log N)$ operations.
The second step requires to solve $\frac{N^2}{4}$ linear systems of size $4 \times 4$, 
with ${\mathcal O}(N^2)$ operations. The theoretical justification of  Algorithm \ref{alg4} is provided in Section \ref{sec:ana3}.

\subsection{Nonlinear Local Smoothing}
\label{smoothing}

As a post-processing step, we use a  local nonlinear smoothing scheme as proposed e.g. in \cite{Plonka09}. To every value $m_{\nn}$, $\nn \in \Lambda_N$,  of the reconstructed magnetization image we apply the following procedure,
\begin{align*}
m_{\nn}^{s} := m_{\nn} + \tau_{\nn} \sum_{\rr \in (\Lambda_1 \setminus \{ {\mathbf 0} \}) \cap \Lambda_N} \frac{g(|m_{\nn}-m_{\nn-\rr}|)}{\|\nn-\rr\|^2}
(m_{\nn-\rr} -m_{\nn}),
\end{align*}
where $\Lambda_1:= \{-1,0,1\} \times \{-1,0,1\}$, $\tau_{\nn} := \big(\sum_{\rr \in (\Lambda_1 \setminus \{ {\mathbf 0} \}) \cap \Lambda_N} \frac{1}{\|\nn-\rr\|^2}\big)^{-1}$, and with
  a function $g$ which is a so-called diffusivity function. In our experiments, we have applied the Perona-Malik diffusivity $g(s)= \frac{1}{1+s^2/\lambda}$ with $\lambda$ as given in Section \ref{sec:num}.
 This procedure  is only one step of the smoothing algorithm in \cite{Plonka09}, which can be interpreted as a discretized nonlinear diffusion smoothing. 

\section{Theoretical Background of Algorithms \ref{alg2}-\ref{alg4}} 
\label{sec:ana}

In this section we show that the MOCCA algorithm almost surely provides a unique reconstruction of the sensitivities ${\mathbf s}^{(j)}$  (up to an unavoidable global ambiguity factor) if the data ${\mathbf y}^{(j)}$ satisfy models (\ref{discmodel})-(\ref{sjk}) or (\ref{models1})-(\ref{general}), i.e., if $s_{\nn}^{(j)}$ can be represented by  trigonometric polynomials of small degree.
In Subsection \ref{sec:ana2} we show that the iterative Algorithm \ref{alg3} always converges and yields a solution image $\tilde{\mm}$ with minimal Frobenius norm, if the coefficient matrix occurring in the least squares system (\ref{riesig}) (with $\beta =0$) is not invertible.
Finally, in Subsection \ref{sec:ana3} we provide a derivation of the fast Algorithm \ref{alg4} and present simple conditions ensuring that the coefficient matrix of the system (\ref{riesig}) (with $\beta=0$) is invertible.

\subsection{Analysis of the MOCCA Algorithm \ref{alg3}}
\label{sec:ana1}
The MOCCA Algorithm \ref{alg2} gives rise to the following theorem.

\begin{theorem}\label{theorec2}
 Let the incomplete $k$-space data ${\mathcal P} \yy^{(j)}$ for   $j=0, \ldots , N_{c}-1$, be given,
where the index set $\Lambda_{\mathcal P}$ of acquired  measurements contains the ACS region, i.e.,  $\Lambda_{M+L-1} \subset \Lambda_{\mathcal P}  \subset \Lambda_N$. Let $(\ref{models1})$, $(\ref{general})$ and 
the sos condition $(\ref{sos1})$ be satisfied for $\yy^{(j)}$,  $\tilde{\mathbf s}^{(j)}$, $j=0, \ldots , N_c-1$, and  $\tilde{\mm}$,  and suppose that:\\
1. The  matrix 
 ${\mathbf A}_M$ in $(\ref{AM})$
has a nullspace of dimension $1$.\\
2. The matrix $\sum_{j=0}^{N_c-1} ({\mathbf B}^{(j)})^* \, {\mathbf B}^{(j)}$ in $(\ref{riesig})$ is invertible.\\
 Then $\tilde{\mm}$  and   $\tilde{\mathbf s}^{(j)}$, $j=0, \ldots , N_c-1$, are  uniquely reconstructed from the $k$-space data ${\mathcal P} \yy^{(j)}$ by Algorithm $\ref{alg2}$. 
\end{theorem}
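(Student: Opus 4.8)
The plan is to verify, step by step, that each operation in Algorithm~\ref{alg2} is well-defined under assumptions~1 and~2 and that the output it produces is the \emph{only} one compatible with the given data and the model. First I would address Step~2 (recovery of $\cc$): since $(\ref{models1})$--$(\ref{general})$ and $(\ref{sos1})$ hold, Theorem~\ref{theoc} applies to the (normalized) coil sensitivities $\tilde{\mathbf s}^{(j)}$, so the stacked vector $\tilde\cc = ((\tilde\cc^{(0)})^T,\ldots,(\tilde\cc^{(N_c-1)})^T)^T$ of their parameter vectors lies in the nullspace of ${\mathbf A}_N$, and hence of its submatrix ${\mathbf A}_M$ built from the ACS block ${\mathbf Y}_{M,L}^{(j)}$ (which is legitimate because $\Lambda_{M+L-1}\subset\Lambda_{\mathcal P}$ guarantees all the entries $y^{(j)}_{(\nuu-\rr)\bmod\Lambda_N}$, $\nuu\in\Lambda_M$, $\rr\in\Lambda_L$, are actually acquired). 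By assumption~1, $\ker{\mathbf A}_M$ is one-dimensional, so the singular vector $\cc$ of norm~$1$ computed in Step~2 equals $\mu\,\tilde\cc$ for some unimodular $\mu\in\C$; consequently ${\mathbf s}^{(j)}=\WW\cc^{(j)}=\mu\,\tilde{\mathbf s}^{(j)}$ for all $j$. The normalization in Steps~4--5, $\tilde{\mathbf s}^{(j)}\leftarrow({\mathbf d}^+)^{1/2}\circ{\mathbf s}^{(j)}$ with ${\mathbf d}=\sum_j\overline{{\mathbf s}^{(j)}}\circ{\mathbf s}^{(j)}$, kills the factor $\mu$ entirely: the pointwise normalization of $({s}_{\nn}^{(j)})_{j}$ to unit $\ell_2$-norm is invariant under multiplication of the whole tuple by $\mu$, and the support $\{\nn:\,d_{\nn}>\epsilon\}$ is likewise unchanged. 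Hence Step~5 reproduces exactly the vectors $\tilde{\mathbf s}^{(j)}$ of the hypothesis (with $\alpha_{\nn}=1$ in $(\ref{gamma})$), up to the sign/support convention fixed by $(\ref{sjn})$.

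Next I would handle Step~6 (recovery of $\tilde\mm$). Since $(\ref{general})$ holds, $\check\yy^{(j)}={\mathcal F}^{-1}\yy^{(j)}=\tilde{\mathbf s}^{(j)}\circ\tilde\mm$, so applying ${\mathcal P}{\mathcal F}$ gives ${\mathcal P}\yy^{(j)}={\mathbf B}^{(j)}\tilde\mm$ with ${\mathbf B}^{(j)}={\mathcal P}{\mathcal F}\diag(\tilde{\mathbf s}^{(j)})$; therefore $\tilde\mm$ is a minimizer of $(\ref{minm})$ with residual zero and satisfies the normal equations $(\ref{riesig})$ (with $\beta=0$). By assumption~2 the coefficient matrix $\sum_j({\mathbf B}^{(j)})^*{\mathbf B}^{(j)}$ is invertible, so $(\ref{riesig})$ has a unique solution, which Step~6 computes exactly; thus Step~6 returns precisely the $\tilde\mm$ of the hypothesis. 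Finally Step~7 replaces $\tilde\mm$ by $(|\tilde m_{\nn}|)_{\nn}$ and absorbs the phases $\sign(\tilde m_{\nn})$ into $\tilde{\mathbf s}^{(j)}$; I would note that this corresponds exactly to the passage from $\alpha_{\nn}=1$ to $\alpha_{\nn}=\sign(m_{\nn})$ analyzed in Section~\ref{sec:sos}, so the output $(\tilde\mm,\tilde{\mathbf s}^{(j)})$ satisfies $(\ref{models1})$, $(\ref{general})$ and the sos condition $(\ref{sos1})$, and the only freedom remaining is the single global scalar used to impose $\|\tilde\mm\|_2=1$ with nonnegative real entries. Along the way I would also record the one genuine ambiguity that the theorem cannot remove: the global factor $\mu\in\C\setminus\{0\}$ of $(\mm,{\mathbf s}^{(j)})\mapsto(\mu^{-1}\mm,\mu\,{\mathbf s}^{(j)})$, which is exactly what the norm-$1$ choices in Steps~2 and~7 pin down; the point of assumptions~1 and~2 is that nothing else can vary.

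I expect the main obstacle to be the bookkeeping around the two layers of ambiguity rather than any single hard estimate: one must be careful to distinguish (i)~the global constant $\mu$ coming from the one-dimensional nullspace in Step~2, which is neutralized by the \emph{sos}-normalization of the sensitivities and does not affect $\tilde{\mathbf s}^{(j)}$ at all, from (ii)~the pixelwise window ambiguity $\gamma_{\nn}$ of the generalized model $(\ref{models1})$, which is \emph{fixed} by the hypothesis (we are told $(\ref{models1})$, $(\ref{general})$ and $(\ref{sos1})$ all hold for the given $\tilde{\mathbf s}^{(j)},\tilde\mm$), and from (iii)~the remaining phase/scale ambiguities $\alpha_{\nn}$ that Step~7 handles. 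The subtle point is that Theorem~\ref{theoc} as stated is about the \emph{non-normalized} model $(\ref{sjk})$, so to invoke it for the normalized $\tilde{\mathbf s}^{(j)}$ one should first observe that $\tilde{\mathbf s}^{(j)}$ itself arises from some parameter vector via $(\ref{sjk})$ only when $\boldsymbol\gamma\equiv$ const; in the generalized setting one instead applies Theorem~\ref{theoc} to the underlying $(\mm,{\mathbf s}^{(j)})$ of $(\ref{sjk})$ and then transports the conclusion through $(\ref{general})$. Making this transport airtight — i.e.\ checking that the column-structure computation $(\ref{projectR})$ still yields the ACS-blocks ${\mathbf Y}_{M,L}^{(j)}$ of the \emph{acquired} data, and that the nullspace statement is unaffected by the $\boldsymbol\gamma$-reparametrization — is the one place where some care is needed; everything else is a direct chain of ``exists and is unique'' implications.
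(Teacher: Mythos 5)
Your proposal is correct and follows essentially the same route as the paper's proof: Assumption~1 pins down $\cc$ (hence ${\mathbf s}^{(j)}$) up to one complex constant via Theorem~\ref{theoc}, the normalization in Steps~4--5 together with Step~7 removes that constant, and Assumption~2 gives unique solvability of (\ref{riesig}); your discussion of how to transport Theorem~\ref{theoc} from the model (\ref{sjk}) to the generalized model (\ref{models1}) is in fact more careful than the paper's. One small slip: the pointwise sos normalization in Steps~4--5 removes only the \emph{modulus} of the nullspace constant, not its phase --- if $\cc=\mu\,\tilde{\cc}/\|\tilde{\cc}\|_2$ with $|\mu|=1$, then Step~5 yields $\mu\,\tilde{\mathbf s}^{(j)}$ and Step~6 yields $\overline{\mu}\,\tilde{\mm}$, so it is not true that Step~5 ``reproduces exactly'' the hypothesized $\tilde{\mathbf s}^{(j)}$ or that the sos normalization ``kills $\mu$ entirely''; the residual unimodular phase is cancelled only in Step~7, where $\sign(\overline{\mu}\tilde{m}_{\nn})\cdot\mu\,\tilde{s}^{(j)}_{\nn}=\sign(\tilde{m}_{\nn})\,\tilde{s}^{(j)}_{\nn}$ and $|\overline{\mu}\tilde{m}_{\nn}|=|\tilde{m}_{\nn}|$. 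Since Step~7 does absorb it, your conclusion stands.
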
 
\begin{proof} Assumption 1 of Theorem \ref{theorec2} ensures that we find the nullspace vector ${\mathbf c}$ 
and thus the coil sensitivities ${\mathbf s}^{(j)} = {\mathbf W} {\mathbf c}^{(j)}$  in (\ref{sj}) uniquely up to a complex constant in step 2 of Algorithm \ref{alg2}.  As shown in Section \ref{sec:sos}, condition (\ref{sos1}) implies  (\ref{gamma}) for the  sensitivities in  (\ref{models1}). Step 4 of Algorithm \ref{alg2} provides  normalized  sensitivities $\tilde{\mathbf s}^{(j)}$ satisfying $\sum_{j=0}^{N_c-1} |\tilde{s}_{\nn}^{(j)}|^2 =1$ (for $\sum_{j=0}^{N_c-1} |s_{\nn}^{(j)}|^2 \neq 0$). Assumption 2 ensures the unique solvability of the system (\ref{riesig}) to recover $\tilde{\mm}$. 
 Note that  Assumption 2 can only be satisfied
if  for $m_{\nn} \neq 0$ we have $\sum_{j=0}^{N_c-1} |\tilde{s}_{\nn}^{(j)}|^2 =1$. 
 Finally, according to (\ref{sos1}), $\tilde{\mm}$ is uniquely defined after transition to the nonnegative real image $|\tilde{\mm}| = (|m_{\nn}|)_{\nn\in \Lambda_N}$, and the 
factors $\alpha_{\nn}=\sign(m_{\nn})$ as in (\ref{gamma}) finally yield the unique sensitivities 
$\tilde{\mathbf s}^{(j)}$ in (\ref{models1}).
\end{proof}

\medskip
\noindent
The matrices ${\mathbf B}^{(j)}$ in (\ref{riesig}) depend only on the coil sensitivities  and on the projection operator ${\mathcal P}$, i.e., on the number and the distribution of the acquired measurements.
We will study the invertibility of $\sum_{j=0}^{N_c-1} ({\mathbf B}^{(j)})^* {\mathbf B}^{(j)}$  more exactly in Section \ref{sec:ana2}.

In this section we focus on investigating  the first assumption in Theorem  \ref{theorec2}  
that the matrix ${\mathbf A}_M$ in (\ref{AM}) possesses a nullspace of  dimension 1.

\noindent
We start with some notations.
For  ${\mathbf m} = (m_{\nn})_{\nn \in \Lambda_N} \in {\mathbb C}^{N^2}$ denote by
\begin{equation}\label{sm} S_N(\mm):= \{ \nn \in \Lambda_N, \, m_{\nn} \neq 0 \} , \qquad N_m := |S_N(\mm)|
\end{equation}
the index set of nonzero components of $\mm$ and its cardinality.
Let $L=2n+1$ as before. For   ${\mathbf c}^{(j)} =(c_{\rr}^{(j)})_{\rr \in \Lambda_L}$ 
 we define the corresponding bivariate Laurent polynomial
$$ \textstyle c^{(j)}(\zz):=\sum\limits_{\rr \in \Lambda_L} c_{\rr}^{(j)} \zz^{\rr} = \sum\limits_{r_1 =-n} ^n \sum\limits_{r_2 =-n} ^n c_{(r_1,r_2)}^{(j)} 
 z_1^{r_1} z_2^{r_2}, \quad   \, \zz \in {\mathbb C}^2, \, z_1\neq 0, z_2 \neq 0,$$
with support $\Lambda_L$. We say that $c^{(j)}(\zz)$ has degree $2L-2$,  if the coefficients corresponding to boundary indices of $\Lambda_{L}$ do not all vanish, i.e., if 
\begin{align}\label{bound}\textstyle \sum\limits_{r_2=-n}^n |c_{(n,r_2)}^{(j)}| >0, \quad \sum\limits_{r_2=-n}^n |c_{(-n,r_2)}^{(j)}| >0, \quad \sum\limits_{r_1=-n}^n |c_{(r_1,n)}^{(j)}| >0,
\quad \sum\limits_{r_1=-n}^n |c_{(r_1,-n)}^{(j)}| >0. 
\end{align}

As shown  in Theorem \ref{theoc}, $(\ref{discmodel})$ with  $(\ref{sjk})$ implies that the matrix ${\mathbf A}_N$ in (\ref{riesig}) always possesses a nullspace of dimension at least $1$. The next lemma provides necessary and sufficient conditions for vectors in the nullspace of ${\mathbf A}_N$.  This observation will be useful to show that the nullspace of ${\mathbf A}_N$ is of dimension $1$ almost surely.

\begin{lemma}\label{lemma1}
Let  $(\ref{discmodel})$ with $(\ref{sjk})$ be  satisfied for $j=0, \ldots , N_c-1$ and $2L < N$.
Assume that  ${\mathbf V}_{\mm} :=(\omega_N^{-\nn \cdot \elll})_{\nn \in S_N(\mm), \elll \in \Lambda_{2L-1}} \in {\mathbb C}^{N_m \times (2L-1)^2}$, with $S_N(\mm)$ in $(\ref{sm})$ has full rank $(2L-1)^2$.
Let ${\mathbf c}:=((\cc^{(0)})^T, \ldots , (\cc^{(N_c-1)})^T)^T \in {\mathbb C}^{N_c L^2}$   with ${\mathbf c}^{(j)} =(c_{\rr}^{(j)})_{\rr \in \Lambda_L} \in {\mathbb C}^{L^2}$ and 
and ${\mathbf v}= (({\mathbf v}^{(0)})^T, ({\mathbf v}^{(1)})^T,  \ldots , ({\mathbf v}^{(N_c-1)})^T)^T$ 
with
${\mathbf v}^{(j)} =(v_{\rr}^{(j)})_{\rr \in \Lambda_L} \in {\mathbb C}^{L^2}$ be  two vectors  in the nullspace of the matrix ${\mathbf A}_N$ in $(\ref{gross})$. Then
the corresponding  bivariate Laurent polynomials $c^{(j)}(\zz)$ and $v^{(j)}(\zz)$ 
satisfy for all $\zz \in {\mathbb C}^2$ with $z_1\neq 0$, $z_2 \neq 0$ the relation
\begin{equation}\label{lau} \textstyle 
c^{(j)}(\zz) \, \sum\limits_{\mycom{\ell=0}{\ell\neq j}}^{N_c-1} v^{(\ell)}(\zz) = v^{(j)}(\zz) \, \sum\limits_{\mycom{\ell=0}{\ell\neq j}}^{N_c-1}  c^{(\ell)}(\zz), \quad j=0, \ldots , N_c-1. 
\end{equation}
\end{lemma}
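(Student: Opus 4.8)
The starting point is to unravel what it means for $\mathbf{c}$ and $\mathbf{v}$ to lie in the nullspace of $\mathbf{A}_N$. Reading off the block rows of $(\ref{gross})$, membership of $\mathbf{c}$ in the nullspace is equivalent to the $N_c$ identities
\begin{equation*}
\mathbf{Y}_{N,L}^{(j)}\,\mathbf{c}^{(j)} \;=\; \mathbf{Y}_{N,L}^{(j)}\Big(\sum_{\mycom{\ell=0}{\ell\neq j}}^{N_c-1}\mathbf{c}^{(\ell)}\Big)^{\!-1}\!\!\!?
\end{equation*}
— more precisely, $-\mathbf{Y}_{N,L}^{(j)}\big(\sum_{\ell\neq j}\mathbf{c}^{(\ell)}\big) + \big(\sum_{\ell\neq j}\mathbf{Y}_{N,L}^{(\ell)}\big)\mathbf{c}^{(j)} = \mathbf{0}$ for each $j$. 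Using the column identity $(\ref{projectR})$ from the proof of Theorem \ref{theoc}, which says exactly that the $\rr$-th column of $\mathcal F\,\diag(\check{\yy}^{(j)})\WW$ equals the $\rr$-shifted periodization of $\yy^{(j)}$, I would rewrite each such block identity in the "Fourier-multiplied" form $(\ref{reljj})$: namely
\begin{equation*}
\check{\yy}^{(j)}\circ\Big(\WW\sum_{\mycom{\ell=0}{\ell\neq j}}^{N_c-1}\mathbf{c}^{(\ell)}\Big) \;=\; \Big(\sum_{\mycom{\ell=0}{\ell\neq j}}^{N_c-1}\check{\yy}^{(\ell)}\Big)\circ\big(\WW\mathbf{c}^{(j)}\big),
\end{equation*}
and the analogous identity with $\mathbf{v}$ in place of $\mathbf{c}$. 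Since $(\ref{discmodel})$ with $(\ref{sjk})$ holds, we have $\check{\yy}^{(j)} = \mm\circ(\WW\mathbf{c}^{(j)}_{\mathrm{true}})$; but actually I only need $\check{\yy}^{(j)} = \mm\circ \mathbf{s}^{(j)}$ where $\mathbf{s}^{(j)} = \WW\mathbf{c}^{(j)}_{\mathrm{true}}$ is the true sensitivity. The key move is to substitute this into both the $\mathbf{c}$-identity and the $\mathbf{v}$-identity and cross-multiply appropriately to eliminate the dependence on $\mm$ and on $\mathbf{c}^{(j)}_{\mathrm{true}}$.

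The cleanest route: from the $\mathbf{c}$-nullspace identity, $\mm\circ(\WW\mathbf{c}^{(j)}_{\mathrm{true}})\circ(\WW\sum_{\ell\neq j}\mathbf{c}^{(\ell)}) = \mm\circ(\WW\sum_{\ell\neq j}\mathbf{c}^{(\ell)}_{\mathrm{true}})\circ(\WW\mathbf{c}^{(j)})$ (here I write $\mathbf s^{(\ell)}=\WW\mathbf c^{(\ell)}_{\mathrm{true}}$ and the sum of sensitivities is $\WW\sum_\ell \mathbf c^{(\ell)}_{\mathrm{true}}$). Similarly for $\mathbf{v}$. Now multiply the $j$-th $\mathbf{c}$-identity pointwise by $(\WW v^{(j)})$-type factors and the $j$-th $\mathbf{v}$-identity by $(\WW c^{(j)})$-type factors and subtract — the aim is to produce a pointwise relation in which every term carries the common factor $\mm\circ(\WW\mathbf c^{(j)}_{\mathrm{true}})$ (or $\mm$ times the sum of true sensitivities), whose support contains $S_N(\mm)$ generically, so it can be cancelled on $S_N(\mm)$. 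After cancellation I should land on
\begin{equation*}
\Big[(\WW\mathbf c^{(j)})\circ\big(\WW\!\!\sum_{\ell\neq j}\mathbf v^{(\ell)}\big) - (\WW\mathbf v^{(j)})\circ\big(\WW\!\!\sum_{\ell\neq j}\mathbf c^{(\ell)}\big)\Big]_{\nn} = 0 \quad\text{for all } \nn\in S_N(\mm).
\end{equation*}
The bracketed vector is, entry by entry at index $\nn$, the evaluation of the bivariate trigonometric (Laurent) polynomial $c^{(j)}(\zz)\sum_{\ell\neq j}v^{(\ell)}(\zz) - v^{(j)}(\zz)\sum_{\ell\neq j}c^{(\ell)}(\zz)$ at $\zz = (\omega_N^{-n_1},\omega_N^{-n_2})$. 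Because $c^{(j)}$ and $v^{(j)}$ have support in $\Lambda_L$, each product has support in $\Lambda_{2L-1}$, so the coefficient vector of this difference polynomial lies in $\mathbb C^{(2L-1)^2}$, and the system of its vanishing at all $\nn\in S_N(\mm)$ is precisely $\mathbf V_{\mm}\cdot(\text{coeff. vector}) = \mathbf 0$. Since $\mathbf V_{\mm}$ has full column rank $(2L-1)^2$ by hypothesis, the coefficient vector is zero, i.e.\ the difference polynomial is identically zero, which is exactly $(\ref{lau})$.

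The main obstacle I anticipate is the cancellation step: making sure that after the cross-multiplication every surviving term genuinely shares a common factor that is supported on (a superset of) $S_N(\mm)$, so that one is allowed to divide it out on $S_N(\mm)$ without losing information. Concretely, the factor to cancel will involve $\mm$ and the "true" sensitivity data entering through $\check\yy^{(j)}$; on $S_N(\mm)$ the entries of $\mm$ are nonzero by definition, but one must be careful that the manipulation does not secretly require dividing by $\WW\mathbf c^{(j)}_{\mathrm{true}}$ or by a sum of sensitivities which could vanish at some pixels. The way around this is to keep the common factor as a whole (do not split it) and only use that it is nonzero precisely on the set where we draw our conclusion, namely $S_N(\mm)$; relation $(\ref{lau})$ is claimed only as a polynomial identity, and a polynomial identity in two complex variables follows from vanishing on the $N_m \geq (2L-1)^2$ nodes $\{(\omega_N^{-n_1},\omega_N^{-n_2}):\nn\in S_N(\mm)\}$ once we know the full-rank condition on $\mathbf V_{\mm}$. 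A secondary, purely bookkeeping point is to handle the index arithmetic in $(\ref{projectR})$ correctly — the periodic wrap-around $(\nuu-\rr)\bmod\Lambda_N$ — but since $2L<N$ this causes no collisions within the relevant $\Lambda_{2L-1}$ block and the identification of coefficient vectors with polynomials is clean.
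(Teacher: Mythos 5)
Your overall skeleton (rewrite the block rows of $\mathbf{A}_N\mathbf{v}=\mathbf{0}$ via (\ref{projectR}) as pointwise products, substitute the model, reduce to the vanishing of a Laurent polynomial supported on $\Lambda_{2L-1}$ at the nodes indexed by $S_N(\mm)$, and invoke the full rank of $\mathbf{V}_{\mm}$) is exactly the paper's strategy, and your endgame is correct. The gap is in the middle: your cross-multiplication step does not deliver what you need. Distinguishing the lemma's $\cc$ from a ``true'' coefficient vector $\cc_{\mathrm{true}}$ and multiplying the $\cc$-identity by $\WW\mathbf{v}^{(j)}$ and the $\mathbf{v}$-identity by $\WW\cc^{(j)}$, the two right-hand sides indeed coincide and cancel, but every surviving term then carries the common factor $\mm\circ(\WW\cc^{(j)}_{\mathrm{true}})=\mm\circ{\mathbf s}^{(j)}$. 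On $S_N(\mm)$ only the factor $m_{\nn}$ is guaranteed nonzero; the sensitivity $s^{(j)}_{\nn}$ may vanish at some $\nn\in S_N(\mm)$, so you can only conclude that the bracketed polynomial vanishes on $S_N(\mm)\cap\{\nn:\,s^{(j)}_{\nn}\neq 0\}$. The hypothesis gives full rank of $\mathbf{V}_{\mm}$ for the rows indexed by \emph{all} of $S_N(\mm)$, not for this possibly smaller subset, so the conclusion that the coefficient vector is zero does not follow. Your proposed workaround (``keep the common factor as a whole'') does not repair this, because the factor is genuinely not known to be nonzero on $S_N(\mm)$.

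The resolution, which is how the paper argues, is that no cross-multiplication is needed: the $\cc^{(j)}$ in the lemma are the model coefficients of (\ref{sjk}) themselves (the hypothesis ``(\ref{discmodel}) with (\ref{sjk})'' fixes ${\mathbf s}^{(j)}=\WW\cc^{(j)}$, and Theorem \ref{theoc} already puts this $\cc$ in the nullspace). Then $\check{\yy}^{(j)}=\mm\circ(\WW\cc^{(j)})$, so $\mathcal{F}^{-1}\mathbf{Y}_{N,L}^{(j)}\mathbf{v}^{(\ell)}=\diag(\mm)\big((\WW\cc^{(j)})\circ(\WW\mathbf{v}^{(\ell)})\big)$, and the $j$-th block row of $\mathbf{A}_N\mathbf{v}=\mathbf{0}$ is \emph{directly} $\diag(\mm)$ times the bilinear expression $(\WW\cc^{(j)})\circ\big(\WW\sum_{\ell\neq j}\mathbf{v}^{(\ell)}\big)-(\WW\mathbf{v}^{(j)})\circ\big(\WW\sum_{\ell\neq j}\cc^{(\ell)}\big)$, with no extraneous sensitivity factor. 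From there your final interpolation/rank argument applies verbatim and the lemma follows.
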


\begin{proof}
Recall that the matrices ${\mathbf Y}_{N,L}^{(j)} = (y_{(\nuu-\rr) \mod \Lambda_N}^{(j)})_{\nuu \in \Lambda_N, \rr \in \Lambda_L}$ can by (\ref{discmodel}), (\ref{eigc0}) and (\ref{projectR}) be rewritten as
$ {\mathbf Y}_{N,L}^{(j)} = {\mathcal F} \diag(\check{\yy}^{(j)}) \, {\mathbf W} = {\mathcal F} \diag(\mm \circ {\mathbf s}^{(j)}) \, {\mathbf W}$.
By ${\mathbf s}^{(j)} = {\mathbf W} {\mathbf c}^{(j)}$, it follows that 
$$ {\mathcal F}^{-1} {\mathbf Y}_{N,L}^{(j)} \, {\mathbf v}^{(\ell)} = \diag(\mm) \, \big(({\mathbf W} {\mathbf c}^{(j)}) \circ ({\mathbf W} {\mathbf v}^{(\ell)}) \big), \qquad j, \ell = 0, \ldots, N_c-1. $$
Thus, ${\mathbf A}_N {\mathbf v} = {\mathbf 0}$ is equivalent to 
\begin{align}\label{eig10}\diag (\mm)  \Big( -\big(\sum\limits_{\mycom{\ell=0}{\ell\neq j}}^{N_c-1} {\mathbf W} {\mathbf c}^{(\ell)} \big)  \circ ({\mathbf W} {\mathbf v}^{(j)})
+  ({\mathbf W} {\mathbf c}^{(j)}) \circ \big(\sum\limits_{\mycom{\ell=0}{\ell\neq j}}^{N_c-1} {\mathbf W} {\mathbf v}^{(\ell)} \big) \Big)= {\mathbf 0}
\end{align}
for $j=0, \ldots, N_c-1$. Using the Laurent polynomial notation in Lemma \ref{lemma1}, we have ${\mathbf W}{\mathbf c}^{(j)}= (c^{(j)}(\omega_N^{-\nn}))_{\nn \in \Lambda_N}$ and ${\mathbf W}{\mathbf v}^{(j)}= (v^{(j)}(\omega_N^{-\nn}))_{\nn \in \Lambda_N}$. We conclude from (\ref{eig10}) that 
\begin{align}\label{lau1}
m_{\nn} \Big( c^{(j)}(\omega_N^{-\nn}) \, \sum\limits_{\mycom{\ell=0}{\ell\neq j}}^{N_c-1} v^{(\ell)}(\omega_N^{-\nn}) - v^{(j)}(\omega_N^{-\nn}) \, \sum\limits_{\mycom{\ell=0}{\ell\neq j}}^{N_c-1}  c^{(\ell)}(\omega_N^{-\nn}) \Big) = 0
\end{align}
for all $j=0, \ldots , N_c-1$ and all  $\nn \in \Lambda_N$.
Since the Laurent polynomials $c^{(j)}(\zz)$ and $v^{(\ell)}(\zz)$  are  of degree at most $2(L-1)$, it follows that  the sum $c^{(j)}(\zz) \, \sum_{\mycom{\ell=0}{\ell\neq j}}^{N_c-1} v^{(\ell)}(\zz) - v^{(j)}(\zz) \, \sum_{\mycom{\ell=0}{\ell\neq j}}^{N_c-1}  c^{(\ell)}(\zz)$ has at most degree $4(L-1)$, 
 and by construction can only have nonzero coefficients  corresponding to the index set $\Lambda_{2L-1}$. 
By assumption, we have $m_{\nn} \neq 0$ for all $\nn \in  S_N(\mm)$,  and the matrix 
${\mathbf V}_{\mm} :=(\omega_N^{-\nn \cdot \rr})_{\nn \in S_N(\mm), \rr \in \Lambda_{2L-1}}$ has full rank $(2L-1)^2$. Therefore, the interpolation conditions in (\ref{lau1}) imply  the condition (\ref{lau}).
\end{proof}

\begin{theorem}\label{theoana1}
Let the assumptions of Lemma $\ref{lemma1}$ be satisfied and assume that ${\mathbf A}_N {\mathbf c} = {\mathbf 0}$  with ${\mathbf c}:=((\cc^{(0)})^T, \ldots , (\cc^{(N_c-1)})^T)^T \in {\mathbb C}^{N_c L^2}$,  where the partial vectors ${\mathbf c}^{(j)}$ satisfy $(\ref{bound})$. 
Then ${\mathbf A}_N$ in $(\ref{gross})$ has a nullspace of dimension $1$ almost surely.
\end{theorem}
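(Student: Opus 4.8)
The plan is to feed the polynomial identity of Lemma~\ref{lemma1} into a divisibility argument in the Laurent polynomial ring, and then to check that the two genericity conditions this argument needs hold off a set of measure zero. Throughout, $N_c\ge 2$ (for $N_c=1$ the matrix $\mathbf{A}_N$ is the zero matrix), and recall from Theorem~\ref{theoc} that $\mathbf{A}_N\mathbf{c}=\mathbf{0}$ with $\mathbf{c}\neq\mathbf{0}$ (by (\ref{bound})), so the nullspace has dimension at least $1$; it remains to bound it by $1$. Let $\mathbf{v}=((\mathbf{v}^{(0)})^T,\ldots,(\mathbf{v}^{(N_c-1)})^T)^T$ be any nullvector of $\mathbf{A}_N$. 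Writing $C(\zz):=\sum_{\ell=0}^{N_c-1}c^{(\ell)}(\zz)$ and $V(\zz):=\sum_{\ell=0}^{N_c-1}v^{(\ell)}(\zz)$ and replacing each $\sum_{\ell\neq j}$ in (\ref{lau}) by the total sum minus the $j$-th term, the mixed product $c^{(j)}v^{(j)}$ cancels and (\ref{lau}) collapses to
\[ c^{(j)}(\zz)\,V(\zz)=v^{(j)}(\zz)\,C(\zz),\qquad j=0,\ldots,N_c-1. \]
I will impose the genericity condition $C(\zz)\not\equiv 0$; the exceptional set $\{C\equiv 0\}$ is a proper linear subspace of the coefficient space $\mathbb{C}^{N_cL^2}$, so it meets the Zariski-open locus cut out by (\ref{bound}) in a null set. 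Since $\mathbb{C}[z_1^{\pm1},z_2^{\pm1}]$ is an integral domain, multiplying the $j$-th identity by $c^{(i)}$, the $i$-th by $c^{(j)}$, and cancelling $C$ yields $v^{(i)}c^{(j)}=v^{(j)}c^{(i)}$ for all $i,j$; hence, over the field $\mathbb{C}(z_1,z_2)$, the $2\times N_c$ matrix with rows $(c^{(0)},\ldots,c^{(N_c-1)})$ and $(v^{(0)},\ldots,v^{(N_c-1)})$ has all $2\times 2$ minors equal to zero and therefore rank at most $1$. Its first row is nonzero, so there is a rational function $\lambda(\zz)$ with $v^{(j)}=\lambda\,c^{(j)}$ for every $j$.

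Next I would show that $\lambda$ is a constant. Write $\lambda=P/Q$ in lowest terms. From $Qv^{(j)}=Pc^{(j)}$ and $\gcd(P,Q)=1$ we obtain $Q\mid c^{(j)}$ for each $j$, so $Q$ divides $\gcd(c^{(0)},\ldots,c^{(N_c-1)})$. Here I use the second genericity condition, that the $c^{(j)}$ have no common non-monomial factor; it suffices that $c^{(0)}$ and $c^{(1)}$ be coprime up to monomials, and the set of coefficient pairs for which two Laurent polynomials with support $\Lambda_L$ fail to be coprime is a proper Zariski-closed subset, hence a null set. Consequently $Q$ is a monomial, a unit of the Laurent ring, so $\lambda\in\mathbb{C}[z_1^{\pm1},z_2^{\pm1}]$. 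Now the support constraint is decisive: $v^{(j)}$ and $c^{(j)}$ are supported in $\Lambda_L=\{-n,\ldots,n\}^2$, while by (\ref{bound}) the Newton polygon of $c^{(j)}$ touches each of the four sides of the square $[-n,n]^2$. A leading-term argument (additivity of Newton polygons under products) then shows that if $\lambda$ had any term of nonzero $z_1$-degree, $\lambda c^{(j)}=v^{(j)}$ would have a term lying outside $\Lambda_L$ in the $z_1$-direction, and likewise for $z_2$; hence $\lambda=\mu\in\mathbb{C}$. Therefore $\mathbf{v}=\mu\,\mathbf{c}$, which gives $\ker\mathbf{A}_N=\mathbb{C}\,\mathbf{c}$, a one-dimensional space.

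Collecting things, the conclusion holds for every parameter tuple $(\mathbf{m},\cc^{(0)},\ldots,\cc^{(N_c-1)})$ that satisfies the standing hypotheses (in particular the full-rank assumption on $\mathbf{V}_\mathbf{m}$ from Lemma~\ref{lemma1}) and lies outside the union of the proper algebraic subsets $\{C\equiv 0\}$ and $\{c^{(0)},c^{(1)}\ \text{not coprime}\}$; this union has Lebesgue measure zero, so $\mathbf{A}_N$ has a one-dimensional nullspace almost surely. The step I expect to cause the most trouble is the bookkeeping at the two ends of the argument: turning the four separate edge conditions in (\ref{bound}) into the statement that $\lambda$ is forced all the way down to a scalar, rather than merely to a monomial or a low-degree factor; and pinning down precisely which coefficient configurations must be excluded, in particular making the genericity of joint coprimality rigorous for Laurent polynomials with a prescribed support set.
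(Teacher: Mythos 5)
Your proof is correct, and it follows the same overall strategy as the paper: both arguments start from the Laurent-polynomial identity (\ref{lau}) supplied by Lemma~\ref{lemma1} and conclude, under genericity of the coefficients, that the polynomials of any further nullvector must be proportional to the $c^{(j)}$ with a single constant of proportionality. The execution, however, is genuinely different and in one respect tighter than the paper's. The paper keeps the identity in the form $c^{(j)}\sum_{\ell\neq j}v^{(\ell)}=v^{(j)}\sum_{\ell\neq j}c^{(\ell)}$, divides to obtain equality of rational functions of type $(2(L-1),2(L-1))$, invokes the (generic) minimality of that type, and then asserts that the resulting constants $\mu$ agree across $j$. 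Your observation that the mixed term $c^{(j)}v^{(j)}$ cancels, collapsing (\ref{lau}) to $c^{(j)}V=v^{(j)}C$ with $C=\sum_\ell c^{(\ell)}$ and $V=\sum_\ell v^{(\ell)}$, produces a single multiplier $\lambda=V/C$ valid for all $j$ at once, so the independence of $\mu$ from $j$ --- the step the paper passes over most quickly --- comes for free. Your two genericity conditions ($C\not\equiv 0$ and joint coprimality of the $c^{(j)}$ in the Laurent ring) play the role of the paper's ``type is almost surely minimal'', and your Newton-polytope argument that (\ref{bound}) forces the Laurent-polynomial multiplier $\lambda$ down to a scalar is a rigorous version of the paper's degree count. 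The only points needing care are the ones you flag yourself: when $\lambda$ is not a monomial you must check that the extremal facet coefficients of $\lambda c^{(j)}$ do not cancel (they do not, being products of nonzero facet polynomials of the factors), and the properness of the non-coprimality locus requires exhibiting one coprime pair with support in $\Lambda_L$ satisfying (\ref{bound}), which is easy. Neither is a gap.
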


\begin{proof}
By  (\ref{gross}), the dimension of the nullspace of ${\mathbf A}_N$ is at least $1$ and we have ${\mathbf A}_N {\mathbf c} = {\mathbf 0}$ with ${\mathbf c}:=((\cc^{(0)})^T, \ldots , (\cc^{(N_c-1)})^T)^T \in {\mathbb C}^{N_c L^2}$.
By Lemma \ref{lemma1}, each further vector ${\mathbf v}= (({\mathbf v}^{(0)})^T, ({\mathbf v}^{(1)})^T, \ldots , ({\mathbf v}^{(N_c-1)})^T)^T$ with ${\mathbf v}^{(j)} =(v_{\rr}^{(j)})_{\rr \in \Lambda_L} \in {\mathbb C}^{L^2}$ in the nullspace of ${\mathbf A}_N$ satisfies (\ref{lau}).
Condition (\ref{bound}) ensures that $c^{(j)}(\zz)$, $j=0, \ldots , N_c-1$, have full degree $2(L-1)$, such that the (bivariate) rational functions
$  \sum_{\mycom{\ell=0}{\ell\neq j}}^{N_c-1}  \frac{c^{(\ell)}(\zz)}{c^{(j)}(\zz)} 
=  \sum_{\mycom{\ell=0}{\ell\neq j}}^{N_c-1}  \frac{z_1^n z_2^n c^{(\ell)}(\zz)}{z_1^n z_2^n c^{(j)}(\zz)}
$ are of type $(2(L-1), 2(L-1))$.
This type is almost surely minimal, i.e., $ \sum_{\mycom{\ell=0}{\ell\neq j}}^{N_c-1}  z_1^n z_2^n c^{(\ell)}(\zz)$ and $z_1^n z_2^n c^{(j)}(\zz)$ almost surely have no common zeros.
For each $j =0, \ldots , N_c-1$ we conclude: If $v^{(j)}(\zz)$ is not the zero polynomial, then (\ref{lau}) implies that 
$$ \sum\limits_{\mycom{\ell=0}{\ell\neq j}}^{N_c-1}  \frac{c^{(\ell)}(\zz)}{c^{(j)}(\zz)} = \sum\limits_{\mycom{\ell=0}{\ell\neq j}}^{N_c-1}  \frac{v^{(\ell)}(\zz)}{v^{(j)}(\zz)} $$
almost everywhere, and since $v^{(j)}(\zz)$ is  by construction a Laurent polynomial of degree at most $2(L-1)$,
it follows that $v^{(j)}(\zz)= \mu c^{(j)}(\zz)$ with some global constant  $\mu$ which does not depend on $j$. 
Hence, ${\mathbf A}_N$ in $(\ref{gross})$ has a nullspace of dimension 1. 
\end{proof}

\begin{remark}
The conditions of Lemma $\ref{lemma1}$ 
can be easily satisfied.
For example,  ${\mathbf V}_{\mm}$ already satisfies the rank assumption, if $m_{\nn} \neq 0$ for $\nn \in \Lambda_{2L-1}$.
If $m_{\nn} =0$ for some $\nn \in \Lambda_{2L-1}$ then it is sufficient to find an index $\nn+(2L-1)\nuu \in \Lambda_{N}$ with $m_{\nn+(2L-1)\nuu} \neq 0$ to satisfy the condition. Therefore, the nullspace condition is satisfied almost surely for ${\mathbf A}_M$ with $M \ge 2L$, if (\ref{bound}) holds and if the  ACS region is chosen large enough.
 If (\ref{bound}) is not satisfied, this is an indication that $L$ has been chosen too large in the model $(\ref{sjk})$ or the ACS region is too small.\\
In practice, the considered models (\ref{discmodel}) and (\ref{sjk}) are not exactly satisfied for MRI data, and the matrix ${\mathbf A}_M$ may have several small singular values of similar size. In this case, a suitable linear combination singular vectors corresponding to these singular values seems to be appropriate such that ${\mathbf d}$ in step 4 of Algorithm \ref{alg2} is well bounded away from zero, see Section \ref{sec:num}.
\end{remark}

\subsection{Analysis of Algorithm \ref{alg3}}
\label{sec:ana2}

In this subsection, we show the convergence of the iteration method in Algorithm \ref{alg3} to a solution of (\ref{riesig}).

\begin{theorem}\label{theoB}
If the model assumptions $(\ref{discmodel})$ and $(\ref{sjk})$ are satisfied with $\sum_{j=0}^{N_c-1} |s^{(j)}_{\nn}|^2 >0$ for all $\nn \in {S}_N({\mm})$ in $(\ref{sm})$,  then the iteration  scheme in Algorithm $\ref{alg3}$ converges  for $0 \le \beta < N^2$ to a solution $\tilde{\mm}$ of $(\ref{riesig})$.  In particular, for $\beta=0$, Algorithm  $\ref{alg3}$ always converges to a solution with minimal $2$-norm.
\end{theorem}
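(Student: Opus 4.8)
The plan is to rewrite the iteration of Algorithm~\ref{alg3} as an affine (Neumann‑type) fixed‑point iteration governed by a single Hermitian matrix, and then to control its spectrum. Substituting the update for $\yy_{\kappa+1}^{(j)}$ into that for $\tilde{\mm}_{\kappa+1}$ and using $\mathcal F^{-1}=\tfrac1{N^2}\mathcal F^*$, $\mathcal F^*\mathcal F=N^2\mathbf I$, $\mathcal P^*=\mathcal P=\mathcal P^2$, and $\diag(\overline{\tilde{\mathbf s}^{(j)}})\diag(\tilde{\mathbf s}^{(j)})=\diag(|\tilde{\mathbf s}^{(j)}|^2)$, one gets
\[
\tilde{\mm}_{\kappa+1}=\Big(\big(1-\tfrac{\beta}{N^2}\big)\mathbf Q-\tfrac1{N^2}\mathbf C\Big)\,\tilde{\mm}_{\kappa}+\tilde{\mm}_0=:\mathbf J\,\tilde{\mm}_{\kappa}+\tilde{\mm}_0,
\]
where $\mathbf C:=\sum_{j=0}^{N_c-1}(\mathbf B^{(j)})^*\mathbf B^{(j)}$ is the positive semidefinite coefficient matrix of $(\ref{riesig})$ for $\beta=0$, $\mathbf Q:=\diag\big((\sum_{j=0}^{N_c-1}|\tilde s_{\nn}^{(j)}|^2)_{\nn\in\Lambda_N}\big)$, and $\tilde{\mm}_0=\tfrac1{N^2}\sum_{j=0}^{N_c-1}(\mathbf B^{(j)})^*\mathcal P\yy^{(j)}$ is $\tfrac1{N^2}$ times the right‑hand side of $(\ref{riesig})$. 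By the normalization in step~5 of Algorithm~\ref{alg2} (with $\epsilon=0$ in $(\ref{sjn})$) one has $\sum_j|\tilde s_{\nn}^{(j)}|^2\in\{0,1\}$, so $\mathbf Q$ is an orthogonal projection; put $\mathcal H_1:=\mathrm{range}(\mathbf Q)$, $\mathcal H_0:=\ker(\mathbf Q)$. Since $\tilde{\mm}_{-1}=\mathbf 0$ enters only the stopping test, the relevant sequence starts at $\tilde{\mm}_0$ and satisfies $\tilde{\mm}_{\kappa}=\big(\sum_{i=0}^{\kappa}\mathbf J^i\big)\tilde{\mm}_0$.

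Next I would collect the spectral facts about $\mathbf J$. It is Hermitian, being a real linear combination of the Hermitian matrices $\mathbf Q$ and $\mathbf C$. From $\mathbf v^*\mathbf C\mathbf v=\sum_j\|\mathcal P\mathcal F\diag(\tilde{\mathbf s}^{(j)})\mathbf v\|_2^2\le\sum_j\|\mathcal F\diag(\tilde{\mathbf s}^{(j)})\mathbf v\|_2^2=N^2\sum_{\nn}\big(\sum_j|\tilde s_{\nn}^{(j)}|^2\big)|v_{\nn}|^2=N^2\,\mathbf v^*\mathbf Q\mathbf v$ we get $0\preceq\tfrac1{N^2}\mathbf C\preceq\mathbf Q\preceq\mathbf I$, hence $-\tfrac{\beta}{N^2}\mathbf I\preceq\mathbf J\preceq\big(1-\tfrac{\beta}{N^2}\big)\mathbf I$; thus the spectrum of $\mathbf J$ lies in $[-\tfrac{\beta}{N^2},\,1-\tfrac{\beta}{N^2}]\subseteq(-1,1]$ for $0\le\beta<N^2$, and this is exactly where the hypothesis $\beta<N^2$ enters, keeping the lower bound above $-1$. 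For $\mathbf v\in\mathcal H_0$ one has $\diag(\tilde{\mathbf s}^{(j)})\mathbf v=\mathbf 0$, so $\mathbf B^{(j)}\mathbf v=\mathbf 0$, $\mathbf C\mathbf v=\mathbf 0$, and $\mathbf J$ vanishes on $\mathcal H_0$; since each $(\mathbf B^{(j)})^*$ maps into $\mathcal H_1$ (its left factor $\diag(\overline{\tilde{\mathbf s}^{(j)}})$ is supported where $(\mathbf Q)_{\nn,\nn}=1$, as $\tilde s_{\nn}^{(j)}\neq0$ forces $\sum_\ell|s_{\nn}^{(\ell)}|^2>0$), also $\mathrm{range}(\mathbf C)\subseteq\mathcal H_1$ and $\tilde{\mm}_0\in\mathcal H_1$; hence $\mathcal H_0,\mathcal H_1$ are $\mathbf J$‑invariant and on $\mathcal H_1$ one has $\mathbf I-\mathbf J=\tfrac1{N^2}(\beta\mathbf I+\mathbf C)|_{\mathcal H_1}$. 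Consequently the eigenvalue $1$ of $\mathbf J$ can occur only when $\beta=0$, with eigenspace $E_1:=\ker(\mathbf I-\mathbf J)=\ker(\mathbf C)\cap\mathcal H_1$. Finally, for $\mathbf v\in\ker(\mathbf C)$ we have $\mathbf B^{(j)}\mathbf v=\mathbf 0$ for every $j$, whence $\langle\mathbf v,\tilde{\mm}_0\rangle=\tfrac1{N^2}\sum_j\langle\mathbf B^{(j)}\mathbf v,\mathcal P\yy^{(j)}\rangle=0$; in particular $\tilde{\mm}_0\perp E_1$.

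With these ingredients in hand I would finish as follows. Decompose $\mathbb C^{N^2}=E_1\oplus E_1^{\perp}$ orthogonally; both summands are $\mathbf J$‑invariant and $\mathbf J|_{E_1^{\perp}}$ has spectral radius strictly below $1$. Since $\tilde{\mm}_0\in E_1^{\perp}$, the series $\sum_{i\ge0}\mathbf J^i\tilde{\mm}_0$ converges, so $\tilde{\mm}_{\kappa}\to\tilde{\mm}^*:=\sum_{i\ge0}\mathbf J^i\tilde{\mm}_0$ with $(\mathbf I-\mathbf J)\tilde{\mm}^*=\tilde{\mm}_0$ (and $\|\tilde{\mm}_{\kappa}-\tilde{\mm}_{\kappa-1}\|_\infty=\|\mathbf J^{\kappa}\tilde{\mm}_0\|_\infty\to0$, so Algorithm~\ref{alg3} terminates). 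Splitting $\tilde{\mm}^*=\tilde{\mm}^*_0+\tilde{\mm}^*_1$ along $\mathcal H_0\oplus\mathcal H_1$ and using $(\mathbf I-\mathbf J)|_{\mathcal H_0}=\mathbf I$ together with $\tilde{\mm}_0\in\mathcal H_1$ forces $\tilde{\mm}^*_0=\mathbf 0$, so $\mathbf Q\tilde{\mm}^*=\tilde{\mm}^*$ and therefore $(\mathbf I-\mathbf J)\tilde{\mm}^*=\tfrac1{N^2}(\beta\mathbf I+\mathbf C)\tilde{\mm}^*=\tilde{\mm}_0$, i.e.\ $\tilde{\mm}^*$ solves $(\ref{riesig})$. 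For $\beta=0$, moreover $\tilde{\mm}^*\in\mathcal H_1\cap E_1^{\perp}=(\ker\mathbf C)^{\perp}$ (using $\ker\mathbf C=\mathcal H_0\oplus E_1$ with $\mathcal H_0\perp E_1$), so $\tilde{\mm}^*$ is the solution of $(\ref{riesig})$ orthogonal to $\ker\mathbf C$, i.e.\ the minimal‑$2$‑norm solution.

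The real work lies in the first two steps: pinning down the iteration matrix $\mathbf J$ and recognizing that the sensitivity normalization turns $\mathbf Q$ into an orthogonal projection — this is what makes $\mathbf J$ Hermitian with spectrum in $[-\beta/N^2,\,1-\beta/N^2]$ and fixes the sharp admissible range $0\le\beta<N^2$. The one genuine subtlety is the borderline case $\beta=0$, where $\mathbf J$ may really have the eigenvalue $1$; there convergence of the Neumann series relies on $\tilde{\mm}_0\perp\ker\mathbf C$ (a consequence of $N^2\tilde{\mm}_0$ being a normal‑equations right‑hand side), and the same orthogonality identifies the limit as the minimal‑norm solution. The model hypotheses $(\ref{discmodel})$, $(\ref{sjk})$ and $\sum_j|s_{\nn}^{(j)}|^2>0$ on $S_N(\mm)$ are not needed for convergence itself; they serve to guarantee that the limiting solution is the physically relevant one, supported on $S_N(\mm)$ and consistent with the acquired data.
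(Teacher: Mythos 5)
Your proof is correct, and its core is the same as the paper's: recast Algorithm \ref{alg3} as an affine fixed-point iteration with a Hermitian iteration matrix, use the sos normalization together with the unitarity of $\frac{1}{N}\mathcal{F}$ and the projection property of $\mathcal{P}$ to place the spectrum in $[-\beta/N^2,\,1-\beta/N^2]$, and in the singular case $\beta=0$ restrict to the orthogonal complement of the kernel, where the starting vector lives, to obtain convergence to the minimal-norm solution. You deviate in two ways that are worth recording. First, you keep the diagonal factor $\diag\big((\sum_{j}|\tilde s_{\nn}^{(j)}|^2)_{\nn}\big)$ explicit as an orthogonal projection rather than setting it equal to $\mathbf I$; the paper instead proves the result first under the assumption $\sum_j|s_{\nn}^{(j)}|^2>0$ for all $\nn\in\Lambda_N$ and then disposes of the degenerate index set $\Lambda_0$ in a separate final step by deleting the corresponding rows and columns, so your $\mathcal H_0\oplus\mathcal H_1$ splitting treats both cases uniformly. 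Second, and more substantively, you derive the key orthogonality $\tilde{\mm}_0\perp\ker\big(\sum_j(\mathbf B^{(j)})^*\mathbf B^{(j)}\big)$ purely from the adjoint structure, via $\langle\mathbf v,\tilde{\mm}_0\rangle=\frac{1}{N^2}\sum_j\langle\mathbf B^{(j)}\mathbf v,\mathcal P\yy^{(j)}\rangle=0$ for $\mathbf v$ in the kernel, whereas the paper obtains it from the model assumption by writing $\yy_0^{(j)}=\mathbf B^{(j)}\tilde{\mm}$ and concluding that $\tilde{\mm}_0$ lies in the range of the (Hermitian) coefficient matrix. Your route therefore establishes convergence, consistency with (\ref{riesig}), and the minimal-norm property for arbitrary acquired data, which substantiates your closing observation that the model hypotheses of the theorem are only needed to interpret the limit, not to obtain it --- a genuine, if modest, strengthening of the statement.
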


\begin{proof} 1. We first assume that $\sum_{j=0}^{N_c-1} |s^{(j)}_{\nn}|^2 >0$ for all $\nn \in \Lambda_N$.
Then the normalized vectors $\tilde{\mathbf s}^{(j)}$ defined in (\ref{sjn}) satisfy
$ \sum_{j=0}^{N_c-1}  |\tilde{s}_{\nn}^{(j)}|^2 =1$ for $\nn \in \Lambda_N$.
Since $\frac{1}{N} {\mathcal F}$ is a unitary matrix, i.e., ${\mathcal F}^{-1} = \frac{1}{N^2} {\mathcal F}^*$, and ${\mathcal P}$ a projection matrix,  we obtain for  the largest eigenvalue of the matrix ${\mathbf Q} :=\frac{1}{N^2} ( \beta {\mathbf I}  + \sum_{j=0}^{N_c-1} ({\mathbf B}^{(j)})^* {\mathbf B}^{(j)})$  with ${\mathbf B}^{(j)} = {\mathcal P} {\mathcal F} \diag(\tilde{\mathbf s}^{(j)})$ that
\begin{align*}  
\textstyle \max\limits_{\mycom{{\mathbf w} \in {\mathbb C}^{N^2}}{\|{\mathbf w}\|_2=1}} {\mathbf w}^*  {\mathbf Q} {\mathbf w} 
&=   \textstyle  \frac{\beta}{N^2} \!+\!
\max\limits_{\mycom{{\mathbf w} \in {\mathbb C}^{N^2}}{\|{\mathbf w}\|_2=1}} \sum\limits_{j=0}^{N_c-1} \|\frac{1}{N}{\mathbf B}^{(j)} {\mathbf w}\|_2^2
=  \textstyle   \frac{\beta}{N^2} \!+\! \max\limits_{\mycom{{\mathbf w} \in {\mathbb C}^{N^2}}{\|{\mathbf w}\|_2=1}} \sum\limits_{j=0}^{N_c-1} \|{\mathcal P} \frac{1}{N}{\mathcal F}  \diag(\tilde{\mathbf s}^{(j)} ) {\mathbf w}\|_2^2
\end{align*}
and therfore
\begin{align} \nonumber
\textstyle \max\limits_{\mycom{{\mathbf w} \in {\mathbb C}^{N^2}}{\|{\mathbf w}\|_2=1}} {\mathbf w}^*  {\mathbf Q} {\mathbf w} 
&\le  \textstyle   \frac{\beta}{N^2} + \max\limits_{\mycom{{\mathbf w} \in {\mathbb C}^{N^2}}{\|{\mathbf w}\|_2=1}} \sum\limits_{j=0}^{N_c-1} \|\diag(\tilde{\mathbf s}^{(j)} ) {\mathbf w}\|_2^2 
=     \frac{\beta}{N^2} + \max\limits_{\mycom{{\mathbf w} \in {\mathbb C}^{N^2}}{\|{\mathbf w}\|_2=1}} \sum\limits_{j=0}^{N_c-1} \sum\limits_{\nn \in \Lambda_N} |\tilde{s}_{\nn}^{(j)}|^2 |w_{\nn}|^2 
\\
\label{eigab}
&=  \textstyle  \frac{\beta}{N^2} +  \max\limits_{\mycom{{\mathbf w} \in {\mathbb C}^{N^2}}{\|{\mathbf w}\|_2=1}}  \sum\limits_{\nn \in \Lambda_N} \big( \sum\limits_{j=0}^{N_c-1}  |\tilde{s}_{\nn}^{(j)}|^2 \big) |w_{\nn}|^2 
= 1  +  \frac{\beta}{N^2}.
\end{align}
The iteration procedure in step 2 of Algorithm \ref{alg3} implies 
\begin{align}\nonumber
\textstyle 
\tilde{\mm}_{\kappa+1} \!&=  \textstyle  \! \Big( \sum\limits_{j=0}^{N_c-1} \!\!\! \diag (\tilde{\mathbf s}^{(j)})^* {\mathcal F}^{-1} (  (1- \frac{\beta}{N^2}) {\mathbf I} - {\mathcal P})
{\mathcal F}  \diag (\tilde{\mathbf s}^{(j)}) \! \Big)  \tilde{\mm}_{\kappa} \!+ \!\!  \sum\limits_{j=0}^{N_c-1} \!\! \diag (\tilde{\mathbf s}^{(j)})^*  {\mathcal F}^{-1} {\mathbf y}_0^{(j)}\\
&= \textstyle   \Big((1- \frac{\beta}{N^2}) {\mathbf I} - \frac{1}{N^2} \big( \sum\limits_{j=0}^{N_c-1}  ({\mathbf B}^{(j)})^* {\mathbf B}^{(j)} \big)\Big) \tilde{\mm}_{\kappa} +  \frac{1}{N^2}   \sum\limits_{j=0}^{N_c-1} ({\mathbf B}^{(j)})^*  {\mathbf y}_0^{(j)}. 
\label{iteration}
\end{align}
Convergence of this iteration scheme is ensured if   the spectral radius of the iteration matrix 
\begin{align} \textstyle   (1- \frac{\beta}{N^2}) {\mathbf I} - \frac{1}{N^2} \Big( \sum\limits_{j=0}^{N_c-1}  ({\mathbf B}^{(j)})^* {\mathbf B}^{(j)} \Big) =  {\mathbf I} - {\mathbf Q}
\end{align}
 is smaller than $1$.
The matrix ${\mathbf Q}$ is by construction  positive semidefinite, and by (\ref{eigab}), all its eigenvalues are in the interval  $[\frac{\beta}{N^2}, 1+\frac{\beta}{N^2}]$. Therefore, ${\mathbf I} - {\mathbf Q}$ has all its eigenvalues in $[-\frac{\beta}{N^2}, 1-\frac{\beta}{N^2}]$.

\noindent
2. Let ${\mathbf Q}$ be  invertible  and $0 \le \beta < N^2$. Then $\|{\mathbf I} - {\mathbf Q}\|_2 < 1$ and  (\ref{iteration}) converges. Its  fixed point satisfies 
$$  \textstyle \tilde{\mm}=  \Big( (1- \frac{\beta}{N^2}){\mathbf I} - \frac{1}{N^2} \Big( \sum\limits_{j=0}^{N_c-1}  ({\mathbf B}^{(j)})^* {\mathbf B}^{(j)} \Big)  \Big) \tilde{\mm} +  \frac{1}{N^2}  \sum\limits_{j=0}^{N_c-1} ({\mathbf B}^{(j)})^*  {\mathbf y}_0^{(j)}, $$
i.e., $\big(\beta {\mathbf I} + \sum_{j=0}^{N_c-1}  ({\mathbf B}^{(j)})^* {\mathbf B}^{(j)}\big) \tilde{\mm} = \sum_{j=0}^{N_c-1}  ({\mathbf B}^{(j)})^* {\mathbf y}^{(j)}$.

\noindent
3. Assume now that ${\mathbf Q}$ is not invertible, i.e., $\beta=0$ and $\sum_{j=0}^{N_c-1}  ({\mathbf B}^{(j)})^* {\mathbf B}^{(j)}$ is singular. Then
from the definitions of 
${\mathbf y}^{(j)}_0= {\mathcal P} {\mathbf y}^{(j)} = {\mathcal P} {\mathcal F} \diag(\tilde{\mathbf s}^{(j)}) \tilde{\mm} = {\mathbf B}^{(j)} \tilde{\mm}$  and of $\tilde{\mm}_0$ in Algorithm \ref{alg3} it follows with ${\mathcal P}= {\mathcal P}^* = {\mathcal P}^* {\mathcal P}$
that
\begin{align*}  \textstyle \tilde{\mm}_0 =  
\sum\limits_{j=0}^{N_c-1} \overline{{\tilde{\mathbf s}^{(j)}} } \circ  ({\mathcal F}^{-1} \yy^{(j)}_0)
= \frac{1}{N^2}  \sum\limits_{j=0}^{N_c-1}  ({\mathbf B}^{(j)})^* \yy^{(j)}_0
= \frac{1}{N^2} \sum\limits_{j=0}^{N_c-1}  ({\mathbf B}^{(j)})^* {\mathbf B}^{(j)} \tilde{\mm} = {\mathbf Q} \tilde{\mm}
\end{align*} 
is in the image space of ${\mathbf Q}= {\mathbf Q}^*$ and therefore orthogonal to the
 nullspace of ${\mathbf Q}$. In particular,  we conclude that the system (\ref{riesig}) has always solutions.
 The iteration scheme (\ref{iteration}) implies that also each further vector $\mm_{\kappa}$ is in the image space of ${\mathbf Q}$. Restricted to the image space of ${\mathbf Q}$, the iteration matrix in (\ref{iteration}) has again a spectral radius smaller than $1$.
 Hence, the  fixed point of (\ref{iteration}) exists also in this case and is a solution of (\ref{riesig}). Moreover, this solution is of minimal 2-norm, since it is orthogonal to the nullspace of $ {\mathbf Q} $.
 
 \noindent
4. Finally, let $\Lambda_0 \neq \emptyset$ be the index set with  $\sum_{j=0}^{N_c-1} | s_{\nn}^{(j)}|^2 =0 $ for $\nn \in \Lambda_0$. By assumption, $\Lambda_0  \cap   {\mathcal S}_N(\mm) =\emptyset$. Then the vectors $\tilde{\mathbf s}^{(j)}$, $\yy_0^{(j)}$ and $\tilde{\mm}_{\kappa}$  all have zero components for $\nn \in \Lambda_0$, the matrices ${\mathbf B}^{(j)}$ have zero columns, and the matrix ${\mathbf Q}$ has zero columns and zero rows for all indices $\nn \in \Lambda_0$. In this case the proof applies as before for the reduced vectors and matrices, where all components of the vectors and all columns or columns/rows of the matrices corresponding to the index set $\Lambda_0$ are removed.
\end{proof}

\subsection{Derivation of Algorithm \ref{alg4} and Unique Solvability of (\ref{riesig})}
\label{sec:ana3}

In this section we derive Algorithm \ref{alg4}. Furthermore, we study the invertibility of $\sum_{j=0}^{N_c-1}  ({\mathbf B}^{(j)})^* {\mathbf B}^{(j)}$, which depends on the index set $\Lambda_{\mathcal P} \subset \Lambda_N$ of acquired data.

Assume that we have computed $\tilde{\mathbf s}^{(j)}$, $j=0, \ldots , N_c-1$, as in step 5 of Algorithm \ref{alg2}. 
To derive a direct efficient algorithm to solve  (\ref{riesig}), we require some notations. 
To avoid confusion,  in this section, we explicitly distinguish between the $N \times N$ matrices ${\mathbf y}^{(j)}$, ${\mathbf s}^{(j)}$, $\tilde{\mathbf s}^{(j)}$, $\mm$, ${\mathcal P}$ and their columnwise vectorized counterparts $\vec({\mathbf y}^{(j)})$, $\vec({\mathbf s}^{(j)})$, $\vec(\tilde{\mathbf s}^{(j)})$, $\vec(\mm)$, $\vec({\mathcal P})$ in ${\mathbb C}^{N^2}$.
The Kronecker product of  two $N \times N$ matrices ${\mathbf A}=(a_{k,\ell})_{k,\ell=-\frac{N}{2}}^{\frac{N}{2}-1} \in {\mathbb C}^{N \times N}$  and ${\mathbf B} \in {\mathbb C}^{N \times N}$ is given as 
$$ {\mathbf A} \otimes {\mathbf B} := (a_{k,\ell} {\mathbf B} )_{k,\ell=-\frac{N}{2}}^{\frac{N}{2}-1} \in {\mathbb C}^{N^2 \times N^2}. $$
For properties of the Kronecker product of matrices and the discrete Fourier transform, we refer to Section 3.4 in \cite{PPST23}.
In matrix form, the Fourier operator ${\mathcal F}$  denotes the centered 2D DFT, and in vector form it can be written as ${\mathcal F} = {\mathbf F}_N \otimes {\mathbf F}_N$, where ${\mathbf F}_N = (\omega_N^{k \ell})_{k,\ell=-\frac{N}{2}}^{\frac{N}{2}-1}$ is the  (centered) Fourier matrix of size $N \times N$, which satisfies ${\mathbf F}_{N}^{*} = N {\mathbf F}_{N}^{-1}$.
With  these notations, (\ref{discmodel})  with incomplete $k$-space data  has the matrix form 
$${\mathcal P} \circ {\mathbf y}^{(j)} = {\mathcal P} \circ( {\mathbf F}_N ( \mm \circ {\mathbf s}^{(j)}) {\mathbf F}_N)$$
and the vector form 
$\vec({\mathcal P}) \circ \vec({\mathbf y}^{(j)}) = \vec({\mathcal P}) \circ ({\mathbf F}_N \otimes {\mathbf F}_N) ( \vec(\mm) \circ \vec({\mathbf s}^{(j)}) ).$ 

We present exemplarily the case, where $N$ is a multiple of $8$, and $\Lambda_{\mathcal P}$ contains 
beside the ACS region $\Lambda_{L+M-1}$ the subset  
$\Lambda_{{\mathcal P}_1} := \{k: \, -\frac{N}{2} \le k \le  \frac{N}{2}-1\} \times \{4k: \, -\frac{N}{8} \le k \le  \frac{N}{8}-1\} $, with $\frac{N^2}{4}$ components,
where,  besides the ACS region, only every fourth column of the $N \times N$ image representation ${\mathbf y}^{(j)}$ is acquired. Let ${\mathcal P}_{1}$ denote the corresponding projection matrix, i.e., 
${\mathcal P}_{1} = (p_{k,\ell})_{(k,\ell) \in \Lambda_N}$ with $p_{k,\ell} = 1$ if $\ell \equiv 0 \, \mod \, 4$ and $p_{k,\ell} = 0$ otherwise.
Then $\diag(\vec({\mathcal P}_{1}))$ is of the form
$$ \diag(\vec({\mathcal P}_{1})) = {\mathbf P}_1 \otimes {\mathbf I}_N \quad \text{with} {\mathbf P}_1
= \diag((1,0,0,0,1,0,0,0, \ldots , 1,0,0,0)) \in {\mathbb C}^{N \times N}, $$
where ${\mathbf  I}_N$ is the $N \times N$ unitary matrix. For the $N^2 \times N^2$ coefficient matrix of (\ref{riesig}), $\sum_{j=0}^{N_c-1} ({\mathbf B}^{(j)})^* {\mathbf B}^{(j)}$, we obtain by $\diag(\vec({\mathcal P}_{1}))^*\diag(\vec({\mathcal P}_{1}))= \diag(\vec({\mathcal P}_{1}))={\mathbf P}_1 \otimes {\mathbf I}_N$
\begin{align*} 
\sum_{j=0}^{N_c-1} ({\mathbf B}^{(j)})^* {\mathbf B}^{(j)} &= \textstyle  \sum\limits_{j=0}^{N_c-1}
\diag(\vec(\tilde{\mathbf s}^{(j)}))^* ({\mathbf F}_N \otimes {\mathbf F}_N)^* ({\mathbf P}_1 \otimes {\mathbf I}_N) ({\mathbf F}_N \otimes {\mathbf F}_N) \diag(\vec(\tilde{\mathbf s}^{(j)})) \\
&= \textstyle  \sum\limits_{j=0}^{N_c-1}
\diag(\vec(\tilde{\mathbf s}^{(j)}))^* (({\mathbf F}_N^*  {\mathbf P}_1 {\mathbf F}_N) \otimes ({\mathbf F}_N)^* {\mathbf F}_N )) \diag(\vec(\tilde{\mathbf s}^{(j)})),
\end{align*}
where $({\mathbf F}_N)^* {\mathbf F}_N  = N \, {\mathbf I}_N$ and $({\mathbf F}_N)^{*}  {\mathbf P}_1 {\mathbf F}_N
= \frac{N}{4} (\delta_{k,\ell  \, \mod \frac{N}{4}})_{k,\ell=-\frac{N}{2}}^{\frac{N}{2}-1} =  \frac{N}{4} {\mathbf E}_4 \otimes {\mathbf I}_{\frac{N}{4}}$, where
$ {\mathbf E}_4$ is a $4 \times 4$ matrix containing only ones.
Therefore, using that ${\mathbf I}_{\frac{N}{4}} \otimes {\mathbf I}_N = {\mathbf I}_{\frac{N^2}{4}}$, 
\begin{align*} 
\textstyle \sum\limits _{j=0}^{N_c-1} ({\mathbf B}^{(j)})^* {\mathbf B}^{(j)} &= \textstyle  \frac{N^2}{4} \sum\limits_{j=0}^{N_c-1}
\diag(\vec(\overline{\tilde{\mathbf s}^{(j)}})) ( {\mathbf E}_4 \otimes {\mathbf I}_{\frac{N^2}{4}}) \diag(\vec(\tilde{\mathbf s}^{(j)}))
\end{align*}
is a block diagonal matrix with only $4$ entries per row and per column. The right-hand-side of (\ref{riesig}) is given by
\begin{align*} &  \textstyle \sum\limits_{j=0}^{N_c-1} ({\mathbf B}^{(j)})^* \diag( \vec({\mathcal P}_1)) \vec({\yy}^{(j)})
=  \textstyle \sum\limits_{j=0}^{N_c-1} \diag(\vec(\overline{\tilde{\mathbf s}^{(j)}})) ({\mathbf F}_N^* \otimes {\mathbf F}_N^*) ({\mathbf P}_1 \otimes {\mathbf I}_N) \vec({\yy}^{(j)}) \\
&=   \textstyle N^2 \sum\limits_{j=0}^{N_c-1} \vec(\overline{\tilde{\mathbf s}^{(j)}}) \circ \vec( {\mathbf F}_N^{-1} ({\yy}^{(j)} {\mathbf P}_1)  {\mathbf F}_N^{-1}) = N^2 \sum\limits_{j=0}^{N_c-1} \vec(\overline{\tilde{\mathbf s}^{(j)}})  \circ  \vec(\check{\yy}_{{\mathbf P}_1}^{(j)}),
\end{align*}
where $\check{\yy}_{{\mathbf P}_1}^{(j)} = (\check{y}_{{\mathbf P}_1,(k,\ell)}^{(j)})_{(k,\ell) \in \Lambda_N}$ denotes the inverse 2D FFT of the incomplete $k$-space data ${\mathcal P}_1 \circ {\mathbf y}^{(j)}$.
The structure of $\sum_{j=0}^{N_c-1} ({\mathbf B}^{(j)})^* {\mathbf B}^{(j)}$ implies that the large system (\ref{riesig}) falls apart into $\frac{N^2}{4}$ small systems of size $4 \times 4$. For 
$k=-\frac{N}{2}, \ldots , \frac{N}{2}-1$ and $\ell= -\frac{N}{8}, \ldots , \frac{N}{8}-1$, we have to solve 
\begin{align*}
\textstyle \frac{1}{4}\left( \frac{4}{N^{2}}\, \beta \, {\mathbf I}_4 + \sum\limits_{j=0}^{N_c-1}  \overline{\mathbf s}^{(j)}_{k,\ell}  \, ({\mathbf s}^{(j)}_{k,\ell})^T \right) \, \left(\begin{array}{c}{m}_{k,\ell-\frac{3N}{8}} \\
 {m}_{k,\ell-\frac{N}{8}}\\
 {m}_{k,\ell+\frac{N}{8}}\\
 {m}_{k,\ell+\frac{3N}{8}} \end{array} \right)
 =  \left( \sum\limits_{j=0}^{N_c-1}     \overline{\mathbf s}^{(j)}_{k,\ell} \circ   \left( \begin{array}{c} \check{y}_{{\mathbf P}_1,k,\ell-\frac{3N}{8}}^{(j)} \\ \check{y}_{{\mathbf P}_1,k,\ell-\frac{N}{8}}^{(j)} \\ \check{y}_{{\mathbf P}_1,k,\ell+\frac{N}{8}}^{(j)} \\\check{y}_{{\mathbf P}_1,k,\ell+\frac{3N}{8}}^{(j)} \end{array} \right)
 \,  \right),
\end{align*}
with ${\mathbf s}^{(j)}_{k,\ell}  =(s_{k,\ell-\frac{3N}{8}}^{(j)}, s_{k,\ell-\frac{N}{8}}^{(j)} , s_{k,\ell+\frac{N}{8}}^{(j)}, s_{k,\ell+\frac{3N}{8}}^{(j)})^T \in {\mathbb C}^4$ to recover $\mm$. These equation systems are implemented in Algorithm \ref{alg4}.
For other patterns one can similarly find a decomposition of $\sum_{j=0}^{N_c-1} ({\mathbf B}^{(j)})^* {\mathbf B}^{(j)}$.

\noindent
Finally, we employ the obtained insights on the structure of the 
matrix $\sum\limits_{j=0}^{N_c-1} ({\mathbf  B}^{(j)} )^* \, {\mathbf  B}^{(j)} $ in (\ref{riesig}) (with $\beta=0$) to find simple conditions for invertibility of this matrix, such that the regularization with $\beta>0$ is not needed. First, we give an invertibility condition in terms of  the vectors ${\mathbf c}^{(j)}$ obtained in the steps 1-3 of the MOCCA Algorithm \ref{alg2}.

\begin{theorem}\label{invert1}
Let  ${\mathbf c}^{(j)} = (c_{\rr}^{(j)})_{\rr \in \Lambda_{L}} \in {\mathbb C}^{L^2}$, $j=0, \ldots , N_c-1$, be the vectors determining the (vectorized) coil sensitivities $ \vec({\mathbf s}^{(j)})= {\mathbf W} {\mathbf c}^{(j)}$ in $(\ref{sj})$ and let $({c}_{\nn}^{(j)})_{\nn \in \Lambda_{N}} \in {\mathbb C}^{N^2}$ be the extensions of ${\mathbf c}^{(j)}$ to $\Lambda_{N}$ with  ${c}_{\nn}^{(j)} = 0$ for $\nn \in \Lambda_{N} \setminus\Lambda_{L}$. Further,  let $\sum_{j=0}^{N_c-1} |s^{(j)}_{\nuu}|^2 >0$ for all $\nuu \in \Lambda_N$.
Then $\sum_{j=0}^{N_c-1} ({\mathbf  B}^{(j)} )^* \, {\mathbf  B}^{(j)} $ is invertible if and only if the sparse matrix
\begin{align}\label{rankcon} 
\left( \begin{array}{c}  (\hat{s}^{(0)}_{(\nuu - \nn)\mod \Lambda_N})_{\nuu \in \Lambda_{\mathcal P}, \nn \in \Lambda_{N}} \\
(\hat{s}^{(1)}_{(\nuu - \nn)\mod \Lambda_N})_{\nuu \in \Lambda_{\mathcal P}, \nn \in \Lambda_{N}} \\ 
\vdots \\
(\hat{s}^{(N_c-1)}_{(\nuu - \nn)\mod \Lambda_N})_{\nuu \in \Lambda_{\mathcal P}, \nn \in \Lambda_{N}} \end{array} \right)  := 
N^2 \left( \begin{array}{c}  ({c}^{(0)}_{(\nuu - \nn)\mod \Lambda_N})_{\nuu \in \Lambda_{\mathcal P}, \nn \in \Lambda_{N}} \\
({c}^{(1)}_{(\nuu - \nn)\mod \Lambda_N})_{\nuu \in \Lambda_{\mathcal P}, \nn \in \Lambda_{N}} \\ 
\vdots \\
({c}^{(N_c-1)}_{(\nuu - \nn)\mod \Lambda_N})_{\nuu \in \Lambda_{\mathcal P}, \nn \in \Lambda_{N}} \end{array} \right) \in {\mathbb C}^{N_c |\Lambda_{\mathcal P}| \times N^2}
\end{align}
has rank $N^{2}$. In particular, if $\sum_{j=0}^{N_c-1} ({\mathbf  B}^{(j)} )^* \, {\mathbf  B}^{(j)} $  is invertible, then $N_c |\Lambda_{\mathcal P}| \ge N^2$ and  $\Lambda_{N} \subset (\nuu + \Lambda_{L})_{\nuu \in \Lambda_{\mathcal P}}$.
\end{theorem}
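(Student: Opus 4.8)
The plan is to convert the invertibility of the Hermitian positive semidefinite matrix $\sum_{j=0}^{N_c-1}({\mathbf B}^{(j)})^*{\mathbf B}^{(j)}$ into a rank statement about a stacked matrix, and then to identify that matrix — after a harmless invertible transformation — with the matrix in $(\ref{rankcon})$ by means of the convolution theorem. First I would set ${\mathbf B}:=(({\mathbf B}^{(0)})^T,\dots,({\mathbf B}^{(N_c-1)})^T)^T$, so that $\sum_{j}({\mathbf B}^{(j)})^*{\mathbf B}^{(j)}={\mathbf B}^*{\mathbf B}$ is invertible if and only if $\ker{\mathbf B}=\bigcap_{j}\ker{\mathbf B}^{(j)}=\{{\mathbf 0}\}$, i.e.\ if and only if the $N_cN^2\times N^2$ matrix ${\mathbf B}$ has rank $N^2$. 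Since ${\mathcal P}$ only zeroes out the rows indexed by $\Lambda_N\setminus\Lambda_{\mathcal P}$, deleting those rows changes neither $\ker{\mathbf B}^{(j)}$ nor $\ker{\mathbf B}$, so it suffices to study the rank of the stacked row-restricted blocks $({\mathcal F}\diag(\tilde{\mathbf s}^{(j)}))_{\nuu\in\Lambda_{\mathcal P},\,\nn\in\Lambda_N}$. Next I would remove the normalization: by step 5 of Algorithm~\ref{alg2} we have $\tilde{\mathbf s}^{(j)}={\boldsymbol\gamma}\circ{\mathbf s}^{(j)}$ with $\gamma_\nn=(\sum_{\ell}|s^{(\ell)}_\nn|^2)^{-1/2}$, and the hypothesis $\sum_{\ell}|s^{(\ell)}_\nuu|^2>0$ for all $\nuu\in\Lambda_N$ makes $\diag({\boldsymbol\gamma})$ a strictly positive, hence invertible, diagonal matrix; thus $\diag(\tilde{\mathbf s}^{(j)})=\diag({\mathbf s}^{(j)})\diag({\boldsymbol\gamma})$, and right-multiplying the whole stack by the nonsingular $\diag({\boldsymbol\gamma})$ does not change its rank, so I may drop the tilde and work with ${\mathbf s}^{(j)}$.

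The key step is the convolution theorem. For any ${\mathbf w}$ one has ${\mathcal F}\diag({\mathbf s}^{(j)}){\mathbf w}={\mathcal F}({\mathbf s}^{(j)}\circ{\mathbf w})=\tfrac{1}{N^2}({\mathcal F}{\mathbf s}^{(j)})\ast({\mathcal F}{\mathbf w})$, where $\ast$ denotes cyclic convolution over $\Lambda_N$; and ${\mathcal F}{\mathbf s}^{(j)}={\mathcal F}({\mathbf W}{\mathbf c}^{(j)})=N^2\tilde{\mathbf c}^{(j)}=\hat{\mathbf s}^{(j)}$, the zero extension of ${\mathbf c}^{(j)}$ to $\Lambda_N$ scaled by $N^2$ (the vector $\tilde{\mathbf c}^{(j)}$ introduced after $(\ref{sj})$, coinciding with the $\hat{s}^{(j)}$ of $(\ref{rankcon})$). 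Hence ${\mathcal F}\diag({\mathbf s}^{(j)})=\big(c^{(j)}_{(\nuu-\nn)\mod\Lambda_N}\big)_{\nuu,\nn\in\Lambda_N}{\mathcal F}$, and after restricting the rows to $\Lambda_{\mathcal P}$ this block equals $\tfrac{1}{N^2}\big(\hat{s}^{(j)}_{(\nuu-\nn)\mod\Lambda_N}\big)_{\nuu\in\Lambda_{\mathcal P},\,\nn\in\Lambda_N}{\mathcal F}$. Stacking over $j$ and pulling out the common invertible right factor ${\mathcal F}$, the stack has rank $N^2$ if and only if the block matrix $\big(\hat{s}^{(j)}_{(\nuu-\nn)\mod\Lambda_N}\big)_{\nuu\in\Lambda_{\mathcal P},\,\nn\in\Lambda_N}$ — which is precisely $(\ref{rankcon})$ — has rank $N^2$. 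Chaining this with the two reductions above gives the claimed equivalence.

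For the ``in particular'' part: a matrix of rank $N^2$ must have at least $N^2$ rows, so $N_c|\Lambda_{\mathcal P}|\ge N^2$; moreover all $N^2$ of its columns must be nonzero. The column of $(\ref{rankcon})$ indexed by $\nn$ can be nonzero only if $(\nuu-\nn)\mod\Lambda_N\in\supp\hat{s}^{(j)}\subseteq\Lambda_L$ for some $\nuu\in\Lambda_{\mathcal P}$ and some $j$, i.e.\ $\nn\in(\nuu-\Lambda_L)\mod\Lambda_N=(\nuu+\Lambda_L)\mod\Lambda_N$ since $\Lambda_L=-\Lambda_L$; hence every $\nn\in\Lambda_N$ lies in $\bigcup_{\nuu\in\Lambda_{\mathcal P}}(\nuu+\Lambda_L)$, the stated covering condition. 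The step I expect to be most delicate is the convolution identity — keeping the $N^2$ normalizing factors, the forward/inverse DFT convention and the ``$\mod\Lambda_N$'' index arithmetic mutually consistent — together with the routine but genuinely needed verification that discarding the zeroed rows does not enlarge the kernel.
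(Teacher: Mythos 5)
Your proposal is correct and follows essentially the same route as the paper's proof: reduce invertibility of $\sum_j({\mathbf B}^{(j)})^*{\mathbf B}^{(j)}$ to triviality of the common kernel of the ${\mathbf B}^{(j)}$, strip off the pointwise normalization as an invertible diagonal factor (using $\sum_j|s^{(j)}_{\nuu}|^2>0$), and apply the convolution theorem to identify ${\mathcal P}{\mathcal F}\diag(\vec({\mathbf s}^{(j)}))$ with the row-restricted convolution matrix built from the zero-extended $\cc^{(j)}$ times ${\mathcal F}$. The only differences are presentational (rank language for the stacked matrix versus the paper's explicit kernel-vector $\tilde{\mathbf g}$), and your spelled-out justification of the ``in particular'' clause via nonzero columns and the symmetry $\Lambda_L=-\Lambda_L$ matches what the paper leaves implicit.
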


\begin{proof}
Let ${\mathbf g} \in {\mathbb C}^{N^2}$ be a vector with ${\mathbf g}^* \big(\sum_{j=0}^{N_c-1} ({\mathbf  B}^{(j)} )^* \, {\mathbf  B}^{(j)} \big) {\mathbf g} =0$.
Since $({\mathbf  B}^{(j)} )^* \, {\mathbf  B}^{(j)}$ is positive semidefinite for all $j=0, \ldots , N_c-1$, it follows that 
$$ {\mathbf g}^* ({\mathbf  B}^{(j)} )^* \, {\mathbf  B}^{(j)} {\mathbf g} = \|{\mathbf  B}^{(j)} {\mathbf g}\|_2^2 =0, \, \qquad j=0, \ldots , N_c-1, $$
i.e., ${\mathbf  B}^{(j)} {\mathbf g} ={\mathbf 0}$ for $j=0, \ldots , N_c-1$.
By definition, we have   ${\mathbf  B}^{(j)}= {\mathcal P} {\mathcal F} \diag(\vec(\tilde{\mathbf s}^{(j)}))
= {\mathcal P} {\mathcal F}  \diag(\vec(({\mathbf d}^+)^{\frac{1}{2}} \circ {\mathbf s}^{(j)}))$ and defining $\tilde{\mathbf g} := \vec(({\mathbf d}^+)^{\frac{1}{2}} )\circ {\mathbf g} = \diag(\vec(({\mathbf d}^+)^{\frac{1}{2}})) {\mathbf g}$ with ${\mathbf d}^+$ as in step 4 of Algorithm  \ref{alg2} we find
$$  \textstyle {\mathbf B}^{(j)} {\mathbf g} =  {\mathcal P}{\mathcal F} \big(\diag(\vec({\mathbf s}^{(j)}  \circ ({\mathbf d}^+)^{\frac{1}{2}} )) {\mathbf g} \big) = {\mathcal P}{\mathcal F} \, \vec({\mathbf s}^{(j)}) \circ \tilde{\mathbf g}
=  \frac{1}{N^2} {\mathcal P} \big(({\mathcal F} \vec({\mathbf s}^{(j)})) * ({\mathcal  F} \tilde{\mathbf g}) \big) 
={\mathbf 0},$$
where $*$ denotes the 2D discrete $N$-periodic convolution.
From  $\vec({\mathbf s}^{(j)})= {\mathbf W} \cc^{(j)}$ we obtain 
 $\vec(\hat{\mathbf s}^{(j)}) := {\mathcal F} \vec({\mathbf s}^{(j)}) = N^2({c}^{(j)}_{\nuu})_{\nuu \in \Lambda_N}$ is the zero-extension of $\cc^{(j)}$ from $\Lambda_L$ to $\Lambda_N$. 
Hence, 
\begin{align*}
{\mathcal P} ({\mathcal F} \vec({\mathbf s}^{(j)})* {\mathcal  F} \tilde{\mathbf g} ) 
&= \big(\hat{s}_{(\nuu-\rr) \mod \Lambda_N}^{(j)}\big)_{\nuu \in \Lambda_{\mathcal P}, \rr \in \Lambda_N} \, ({\mathcal  F} \tilde{\mathbf g})\\
& = 
N^2 \big({c}_{(\nuu-\rr) \mod \Lambda_N}^{(j)}\big)_{\nuu \in \Lambda_{\mathcal P}, \rr \in \Lambda_N} \, ({\mathcal  F} \tilde{\mathbf g}) = 
{\mathbf 0}
\end{align*}
for $j=0, \ldots, N_c-1$.
Thus,  $\tilde{\mathbf g} ={\mathbf 0}$, if and only if the matrix in (\ref{rankcon})
has full rank $N^2$. Therefore, also  ${\mathbf g} ={\mathbf 0}$, since ${\mathbf d}^+$ has only positive components. In particular, the invertibility of $\sum_{j=0}^{N_c-1} ({\mathbf  B}^{(j)} )^* \, {\mathbf  B}^{(j)} $ implies $N_c |\Lambda_{\mathcal P}| \ge N^2$ and
$\Lambda_{N} \subset (\nuu + \Lambda_{L})_{\nuu \in \Lambda_{\mathcal P}}$. 
\end{proof}

Similarly, we obtain a  condition for other regular patterns of acquired data, as e.g. for the special example that each second row and each second column of the $k$-space data is acquired. 

\begin{corollary}
Assume that the index set of acquired measurements $\Lambda_{\mathcal P}$ contains beside the ACS region index set $\Lambda_{M+L-1}$ the set $2\Lambda_{N/2}$, i.e., every second index in $x$-direction and $y$-direction. Then, the matrix 
$\sum_{j=0}^{N_c-1} ({\mathbf  B}^{(j)} )^* \, {\mathbf  B}^{(j)}$ is invertible, if 
the $(N_c \times 4)$-matrices
\begin{align}\label{rank4} {\mathbf G}_{\nn} := \left( \begin{array}{cccc} s_{\nn + (\frac{N}{4}, \frac{N}{4})}^{(0)} &  s_{\nn + (\frac{N}{4}, -\frac{N}{4})}^{(0)} &  s_{\nn + (-\frac{N}{4}, \frac{N}{4})}^{(0)} &  s_{\nn + (-\frac{N}{4}, -\frac{N}{4})}^{(0)}  \\
s_{\nn + (\frac{N}{4}, \frac{N}{4})}^{(1)} &  s_{\nn + (\frac{N}{4}, -\frac{N}{4})}^{(1)} &  s_{\nn + (-\frac{N}{4}, \frac{N}{4})}^{(1)} &  s_{\nn + (-\frac{N}{4}, -\frac{N}{4})}^{(1)}  \\
\vdots & \vdots & \vdots & \vdots \\
s_{\nn + (\frac{N}{4}, \frac{N}{4})}^{(N_c-1)} &  s_{\nn + (\frac{N}{4}, -\frac{N}{4})}^{(N_c-1)} &  s_{\nn + (-\frac{N}{4}, \frac{N}{4})}^{(N_c-1)} &  s_{\nn + (-\frac{N}{4}, -\frac{N}{4})}^{(N_c-1)}  \end{array} \right)
\end{align}
have full rank $4$ for each $\nn \in \Lambda_{\frac{N}{2}}$.
\end{corollary}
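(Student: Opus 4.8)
The plan is to follow the decoupling derivation used for Algorithm~\ref{alg4} (in the variant described in the Remark after it), and then read off invertibility from the resulting $4\times4$ blocks. First I would strip off the ACS part: write $\Lambda_{\mathcal P}$ as the disjoint union of $2\Lambda_{N/2}$ and $\Lambda_{\mathcal P}\setminus 2\Lambda_{N/2}$, and accordingly split the diagonal $0/1$ matrix as $\diag(\vec({\mathcal P}))=\diag(\vec({\mathcal P}_0))+\diag(\vec({\mathcal P}'))$, where ${\mathcal P}_0$ is the projection onto $2\Lambda_{N/2}$. Using ${\mathbf B}^{(j)}={\mathcal P}{\mathcal F}\diag(\vec(\tilde{\mathbf s}^{(j)}))$ together with ${\mathcal P}^*{\mathcal P}={\mathcal P}$ for each of the two $0/1$ projections, this gives
\[
\sum_{j=0}^{N_c-1}({\mathbf B}^{(j)})^*{\mathbf B}^{(j)}=\sum_{j=0}^{N_c-1}\diag(\vec(\tilde{\mathbf s}^{(j)}))^*\,{\mathcal F}^*\diag(\vec({\mathcal P}_0))\,{\mathcal F}\,\diag(\vec(\tilde{\mathbf s}^{(j)}))\;+\;{\mathbf R},
\]
with ${\mathbf R}=\sum_j\diag(\vec(\tilde{\mathbf s}^{(j)}))^*{\mathcal F}^*\diag(\vec({\mathcal P}'))\,{\mathcal F}\,\diag(\vec(\tilde{\mathbf s}^{(j)}))$ positive semidefinite. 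Hence it suffices to prove that the first sum, the coefficient matrix for the ``pure'' pattern $2\Lambda_{N/2}$, is positive definite under the rank hypothesis.

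For ${\mathcal P}_0$ I would exploit the Kronecker structure exactly as in Section~\ref{sec:ana3}: here $\diag(\vec({\mathcal P}_0))={\mathbf P}_0\otimes{\mathbf P}_0$ with ${\mathbf P}_0=\diag((1,0,1,0,\ldots,1,0))\in{\mathbb C}^{N\times N}$, and ${\mathbf F}_N^*{\mathbf P}_0{\mathbf F}_N=\tfrac N2{\mathbf C}_0$ where $({\mathbf C}_0)_{k,\ell}=1$ iff $k\equiv\ell\pmod{N/2}$ and $0$ otherwise. Consequently ${\mathcal F}^*({\mathbf P}_0\otimes{\mathbf P}_0){\mathcal F}=\tfrac{N^2}{4}({\mathbf C}_0\otimes{\mathbf C}_0)$, and the nonzero pattern of ${\mathbf C}_0\otimes{\mathbf C}_0$ couples a pixel only with the three others obtained by adding $(\tfrac N2,0)$, $(0,\tfrac N2)$, $(\tfrac N2,\tfrac N2)$ modulo $\Lambda_N$. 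As $\nn$ runs over $\Lambda_{N/2}$, the four-tuples $\{\nn+(\tfrac N4,\tfrac N4),\,\nn+(\tfrac N4,-\tfrac N4),\,\nn+(-\tfrac N4,\tfrac N4),\,\nn+(-\tfrac N4,-\tfrac N4)\}$ are exactly these coupling classes and partition $\Lambda_N$. Conjugating $\tfrac{N^2}{4}({\mathbf C}_0\otimes{\mathbf C}_0)$ by $\diag(\vec(\tilde{\mathbf s}^{(j)}))$ and summing over $j$, the matrix for ${\mathcal P}_0$ becomes, after the permutation that groups these four-tuples, block diagonal with $\tfrac{N^2}{4}$ blocks, each a positive multiple of $\sum_{j=0}^{N_c-1}\overline{\tilde{\mathbf s}^{(j)}_{\nn}}\,(\tilde{\mathbf s}^{(j)}_{\nn})^T$, where $\tilde{\mathbf s}^{(j)}_{\nn}:=(\tilde s^{(j)}_{\nn+(\frac N4,\frac N4)},\tilde s^{(j)}_{\nn+(\frac N4,-\frac N4)},\tilde s^{(j)}_{\nn+(-\frac N4,\frac N4)},\tilde s^{(j)}_{\nn+(-\frac N4,-\frac N4)})^T\in{\mathbb C}^4$; this is the $\beta=0$ case of the $4\times4$ systems in the variant of Algorithm~\ref{alg4}.

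It remains to connect the block condition with the rank statement for ${\mathbf G}_{\nn}$. Let $\tilde{\mathbf G}_{\nn}\in{\mathbb C}^{N_c\times4}$ have $j$-th row $(\tilde{\mathbf s}^{(j)}_{\nn})^T$; then $\sum_j\overline{\tilde{\mathbf s}^{(j)}_{\nn}}(\tilde{\mathbf s}^{(j)}_{\nn})^T=\tilde{\mathbf G}_{\nn}^*\tilde{\mathbf G}_{\nn}$, which is positive definite iff $\rank\tilde{\mathbf G}_{\nn}=4$. Since $\tilde s^{(j)}_{\nuu}=\sqrt{d^+_{\nuu}}\,s^{(j)}_{\nuu}$ with $d^+_{\nuu}>0$ whenever $\sum_j|s^{(j)}_{\nuu}|^2>0$ (step~4 of Algorithm~\ref{alg2}), we have $\tilde{\mathbf G}_{\nn}={\mathbf G}_{\nn}{\mathbf D}_{\nn}$ with ${\mathbf D}_{\nn}\in{\mathbb C}^{4\times4}$ a positive definite diagonal matrix, so $\rank\tilde{\mathbf G}_{\nn}=\rank{\mathbf G}_{\nn}$; moreover $\rank{\mathbf G}_{\nn}=4$ rules out a zero column of ${\mathbf G}_{\nn}$, which forces $\sum_j|s^{(j)}_{\nuu}|^2>0$ at every $\nuu\in\Lambda_N$ (each such $\nuu$ being one column of a unique ${\mathbf G}_{\nn}$), so the rescaling is legitimate. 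Putting everything together: if all ${\mathbf G}_{\nn}$, $\nn\in\Lambda_{N/2}$, have full rank $4$, then each block and hence the whole block-diagonal matrix for ${\mathcal P}_0$ is positive definite, and adding the positive semidefinite ${\mathbf R}$ preserves positive definiteness, so $\sum_{j=0}^{N_c-1}({\mathbf B}^{(j)})^*{\mathbf B}^{(j)}$ is invertible.

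The only genuinely delicate part is the index bookkeeping in the second step: verifying that ${\mathbf F}_N^*{\mathbf P}_0{\mathbf F}_N$ has the claimed sparsity for the centered indexing $k,\ell\in\{-\tfrac N2,\ldots,\tfrac N2-1\}$, that the four-tuples $\{\nn+(\pm\tfrac N4,\pm\tfrac N4):\nn\in\Lambda_{N/2}\}$ are exactly the residue classes modulo $\tfrac N2$ in each coordinate and tile $\Lambda_N$ without overlap (here one uses that $N$ is a multiple of $4$), and that the chosen permutation genuinely block-diagonalizes the conjugated matrix with the blocks written above. Everything else is immediate from positive semidefiniteness and the identity $\sum_j\overline{\tilde{\mathbf s}^{(j)}_{\nn}}(\tilde{\mathbf s}^{(j)}_{\nn})^T=\tilde{\mathbf G}_{\nn}^*\tilde{\mathbf G}_{\nn}$. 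One could instead specialize the convolution argument in the proof of Theorem~\ref{invert1} to this $\Lambda_{\mathcal P}$, but the Kronecker route is what directly yields the stated $4\times4$ form.
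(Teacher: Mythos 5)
Your proposal is correct and follows essentially the same route as the paper, whose proof is only a pointer to the Kronecker-structure derivation of Section 4.3 and the variant in Remark 3.3: split off the pure $2\Lambda_{N/2}$ pattern (the ACS remainder being positive semidefinite), use ${\mathbf F}_N^*{\mathbf P}_0{\mathbf F}_N=\tfrac N2 {\mathbf C}_0$ to block-diagonalize into $\tfrac{N^2}{4}$ Gram matrices $\tilde{\mathbf G}_{\nn}^*\tilde{\mathbf G}_{\nn}$, and identify full rank of these with full rank of ${\mathbf G}_{\nn}$ via the positive diagonal rescaling $({\mathbf d}^+)^{1/2}$. You have simply supplied the details (including the tiling of $\Lambda_N$ by the four-tuples and the observation that $\rank\,{\mathbf G}_{\nn}=4$ forces $d_{\nuu}>0$ at the four pixels) that the paper leaves implicit.
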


\begin{proof}
The proof follows directly from the observations  in this section and in Section  \ref{sec:incompletefast}, Remark 3.3,  where the matrices ${\mathbf G}_{\nn} = {\mathbf G}_{(k,\ell)}$ with $k,\ell=-\frac{N}{4}, \ldots , \frac{N}{4}-1$ need to be invertible to recover the components of $\mm$.
\end{proof}

\section{Numerical Results}
\label{sec:num}

 In this section we test the Algorithms \ref{alg2}-\ref{alg4} with parallel MRI data from two different brain images. 
The data sets brain$\_8$ch and brain$\_32$ch (using only 8 coils) are in the ESPIRiT toolbox and have been also used to in \cite{ESPIRIT}.
The  magnitudes of the coil images  $|\mathcal{F}^{-1}\yy^{(j)}|$ for these data sets
are illustrated in Figure \ref{coils}.

\begin{figure}[h!]
\begin{center}\hspace*{-5mm}
	\includegraphics[width=19mm,height=19mm]{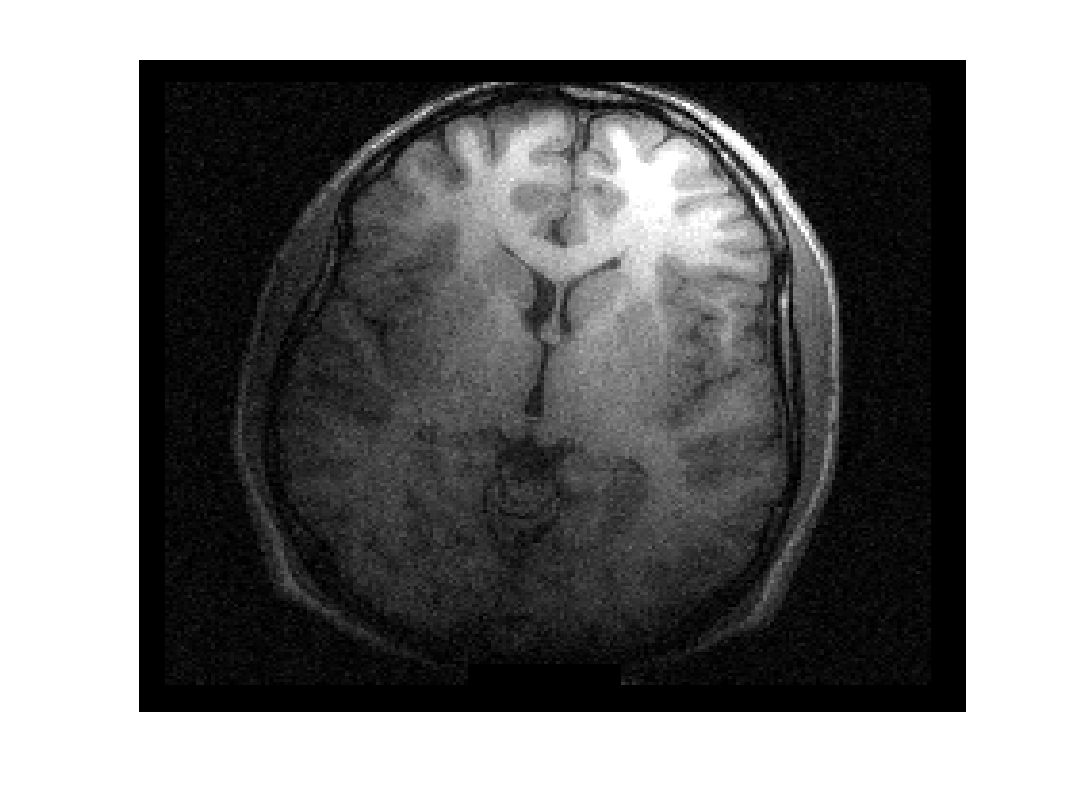} \hspace*{-5mm}
	\includegraphics[width=19mm,height=19mm]{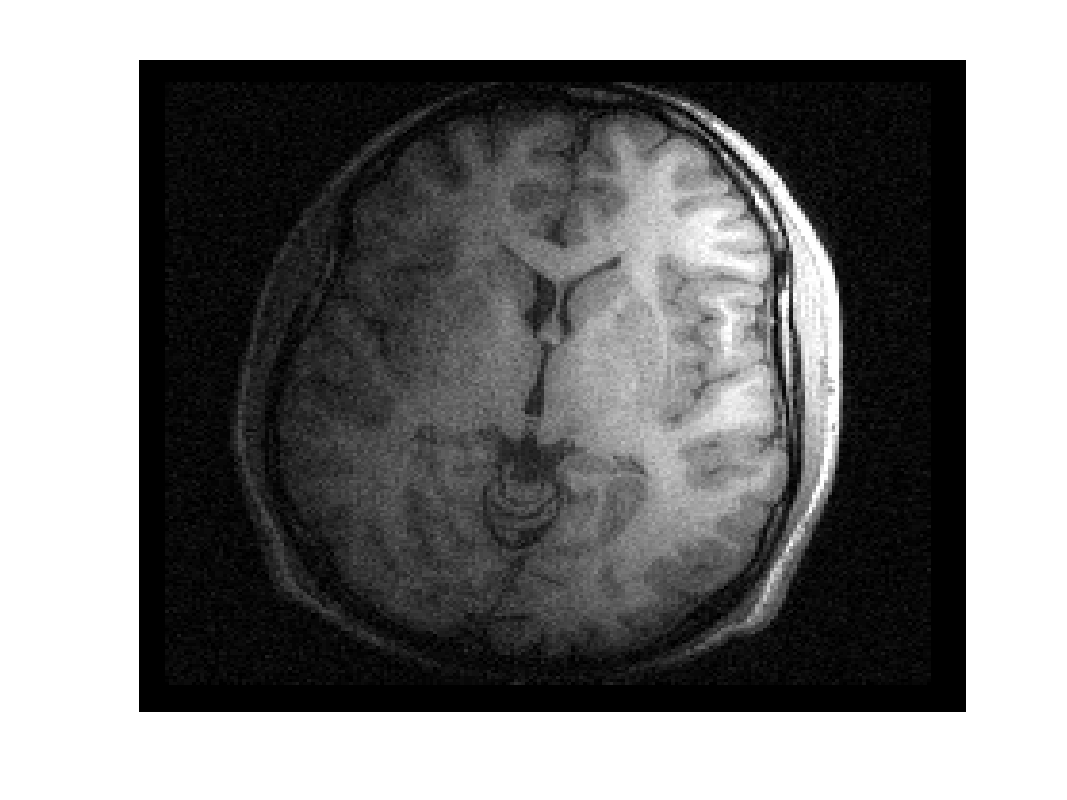} \hspace*{-5mm}
	\includegraphics[width=19mm,height=19mm]{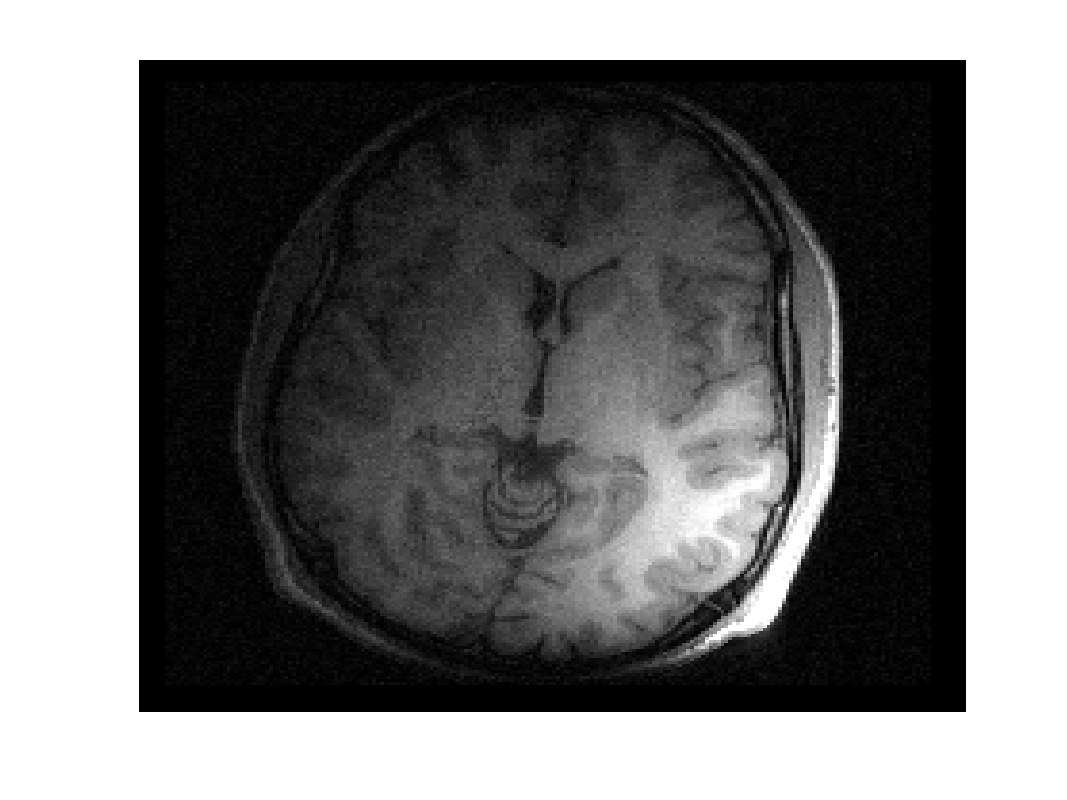} \hspace*{-5mm}
	\includegraphics[width=19mm,height=19mm]{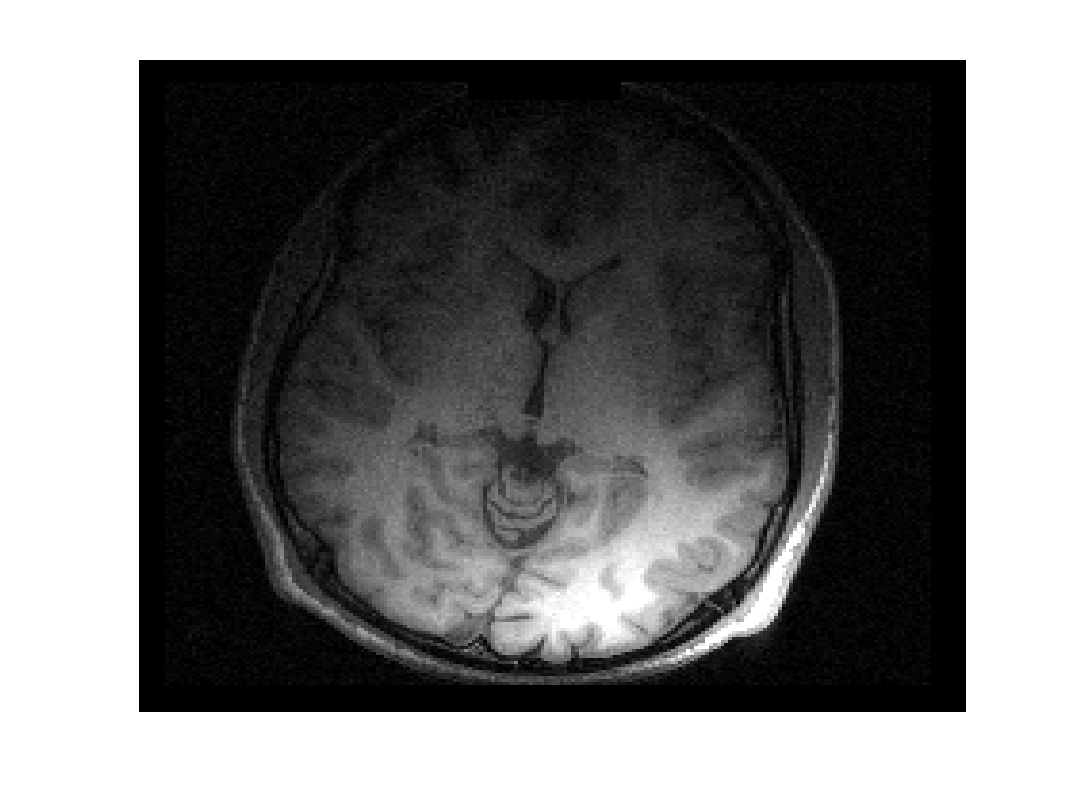} \hspace*{-5mm}
	\includegraphics[width=19mm,height=19mm]{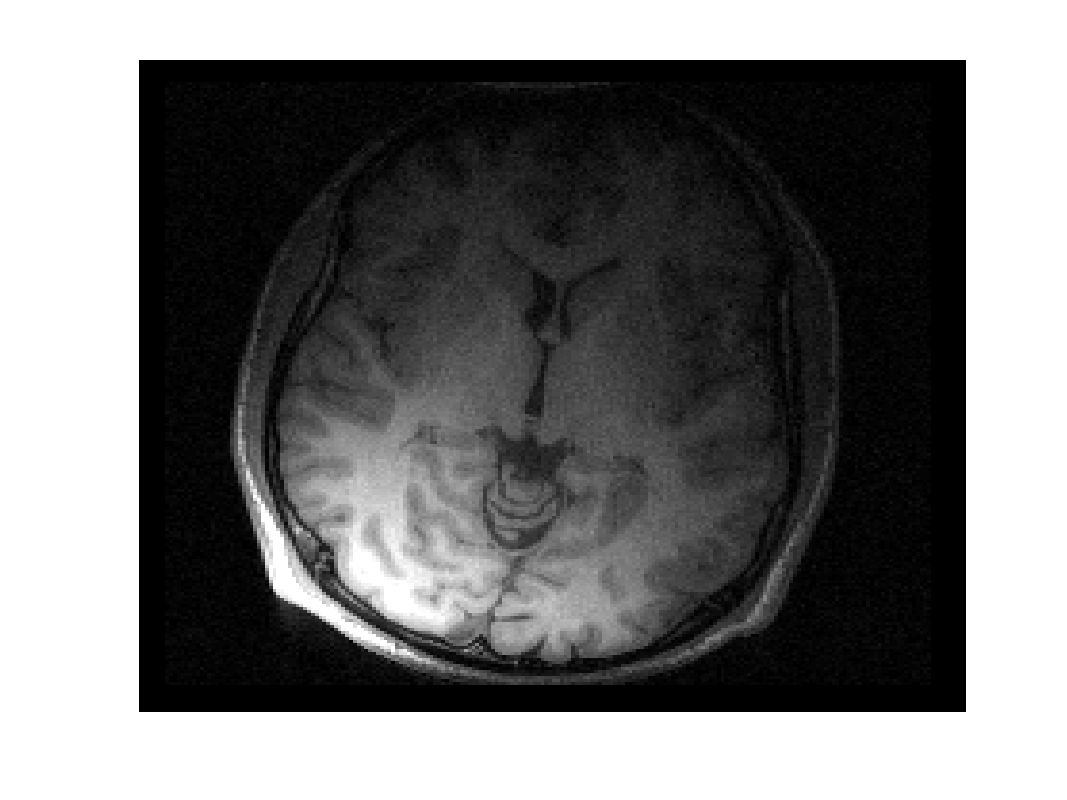} \hspace*{-5mm}
	\includegraphics[width=19mm,height=19mm]{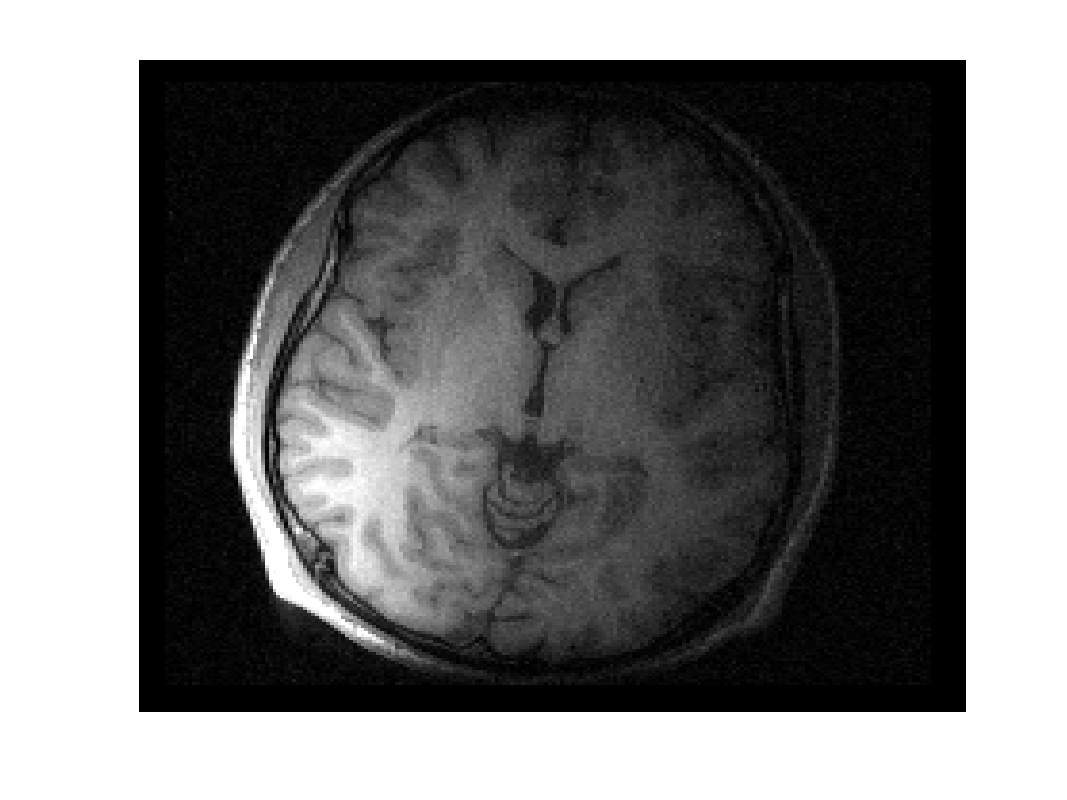} \hspace*{-5mm}
	\includegraphics[width=19mm,height=19mm]{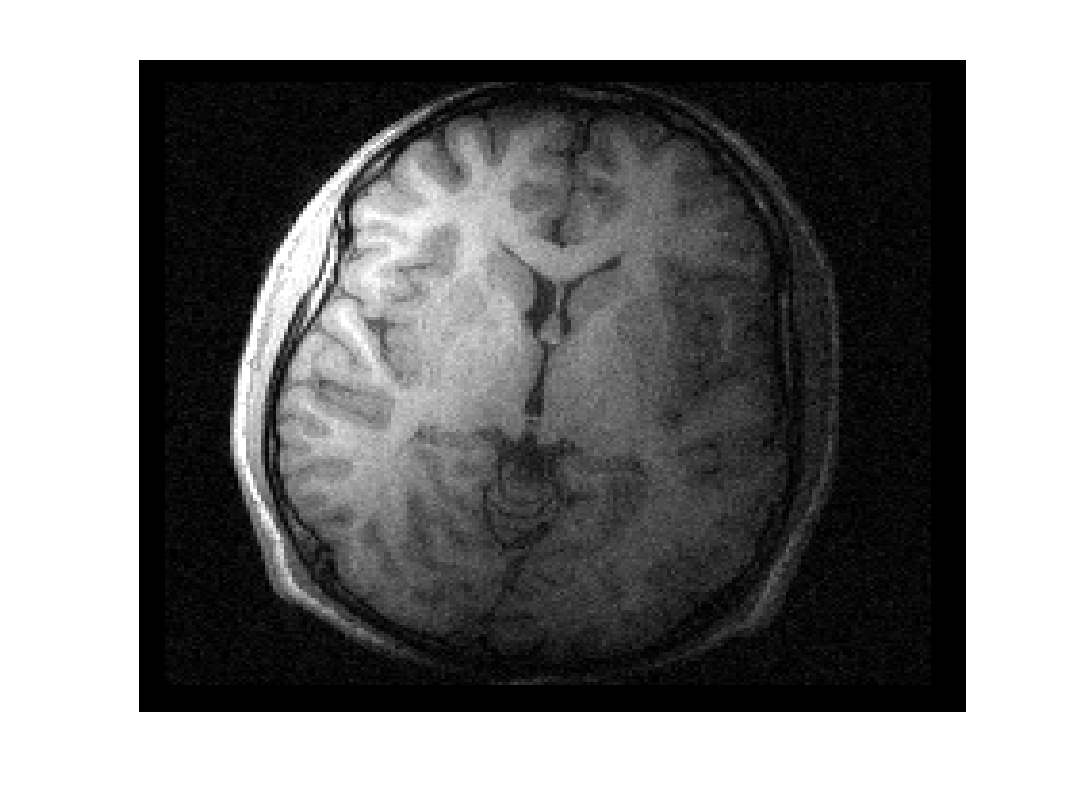} \hspace*{-5mm}
	\includegraphics[width=19mm,height=19mm]{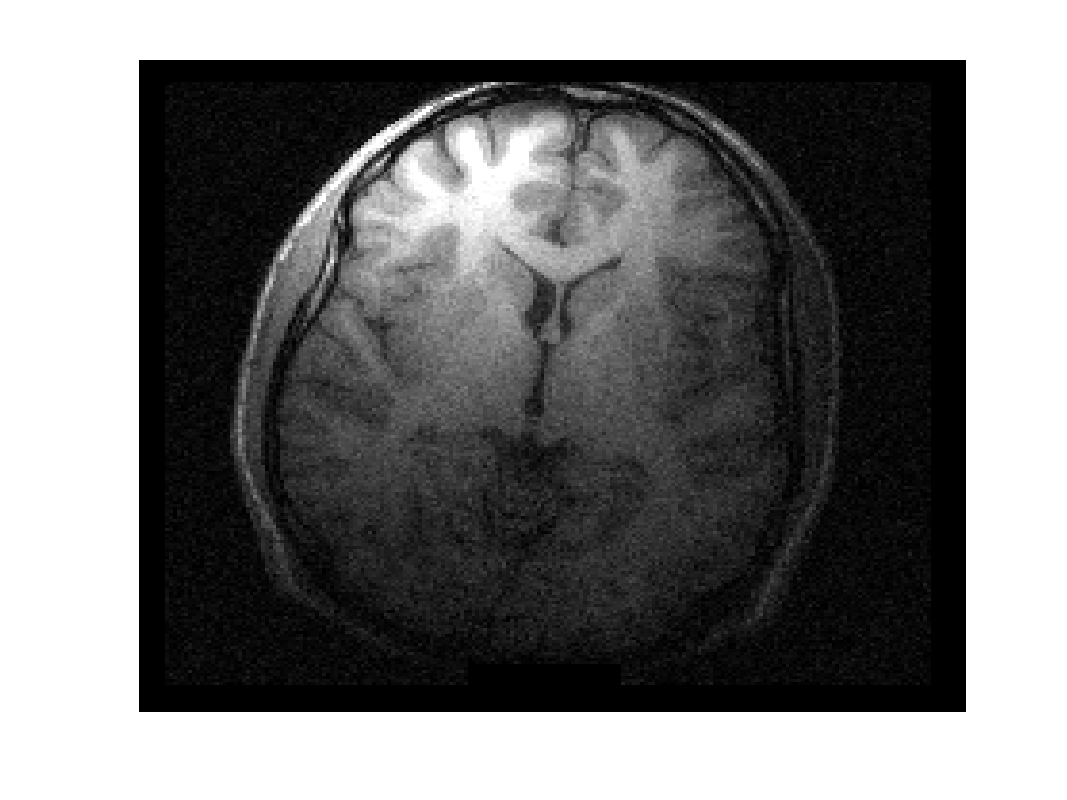} \hspace*{-5mm}
	\includegraphics[width=19mm,height=19mm]{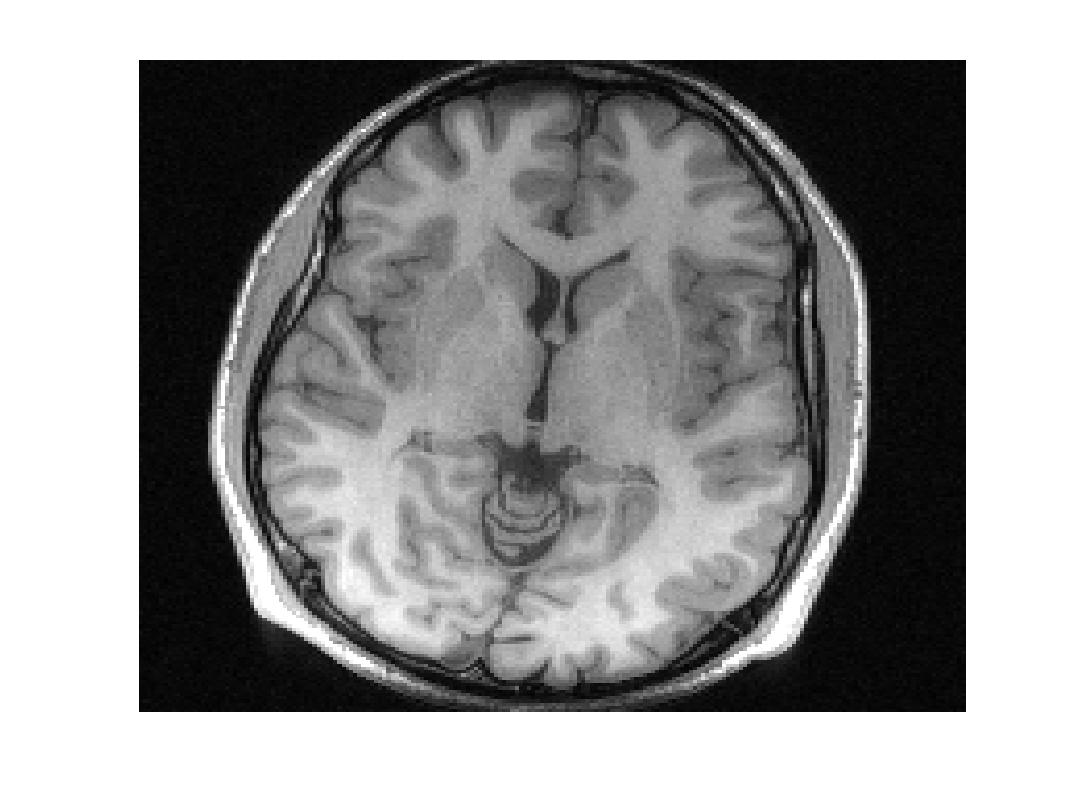} \hspace*{-5mm}
\end{center}
\begin{center}\hspace*{-5mm}
	\includegraphics[width=19mm,height=19mm]{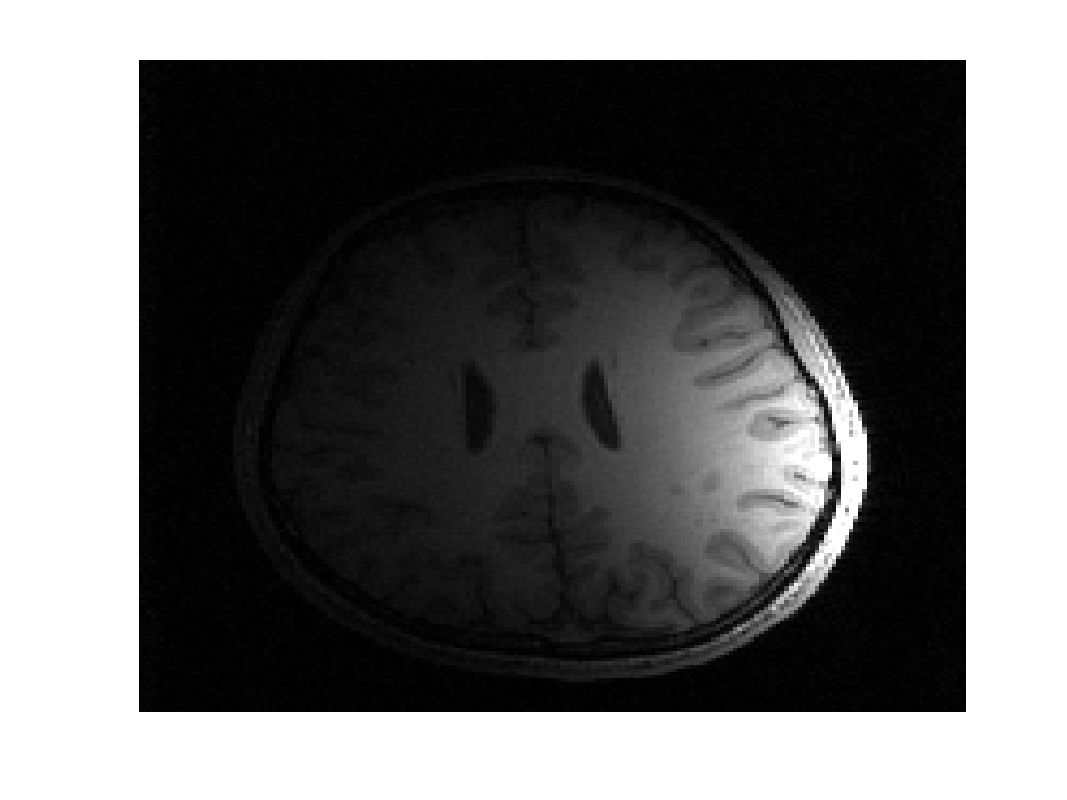} \hspace*{-5mm}
	\includegraphics[width=19mm,height=19mm]{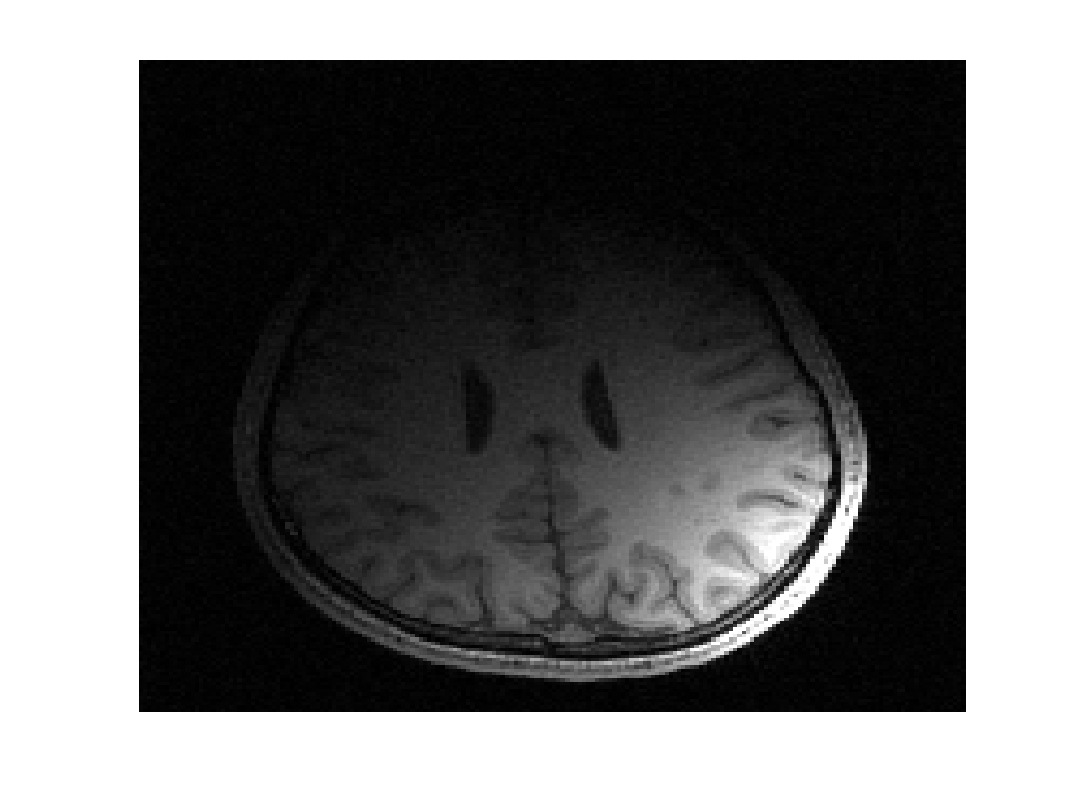} \hspace*{-5mm}
	\includegraphics[width=19mm,height=19mm]{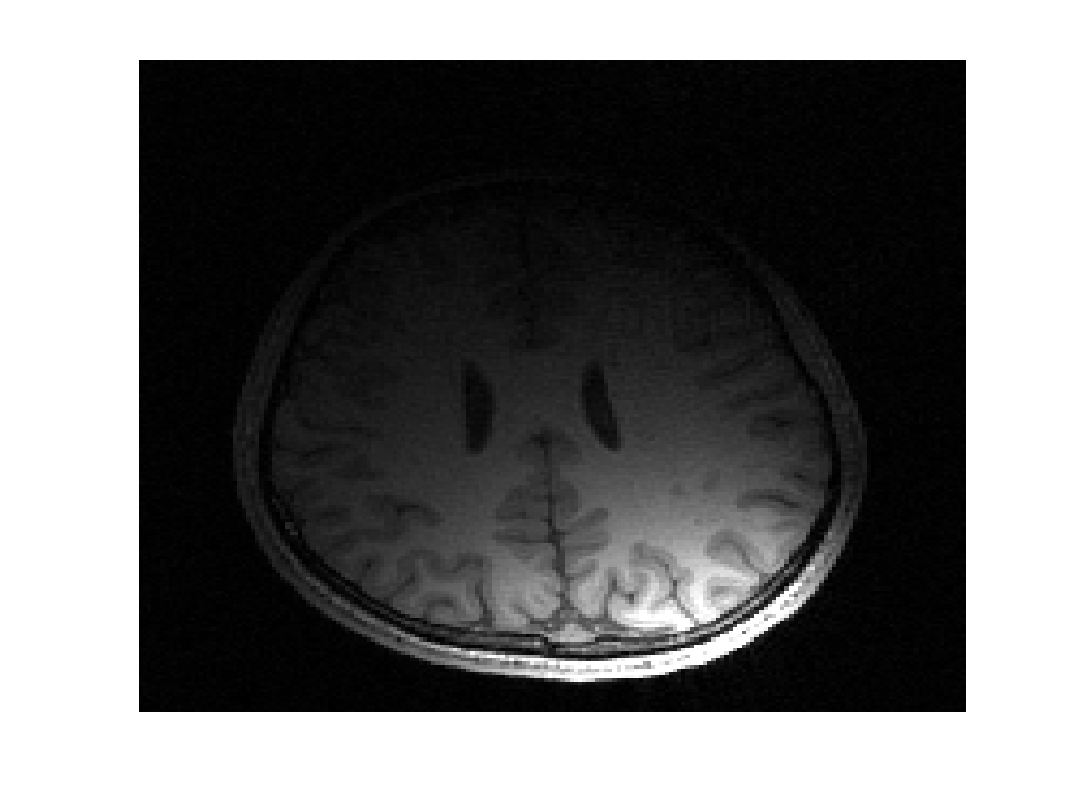} \hspace*{-5mm}
	\includegraphics[width=19mm,height=19mm]{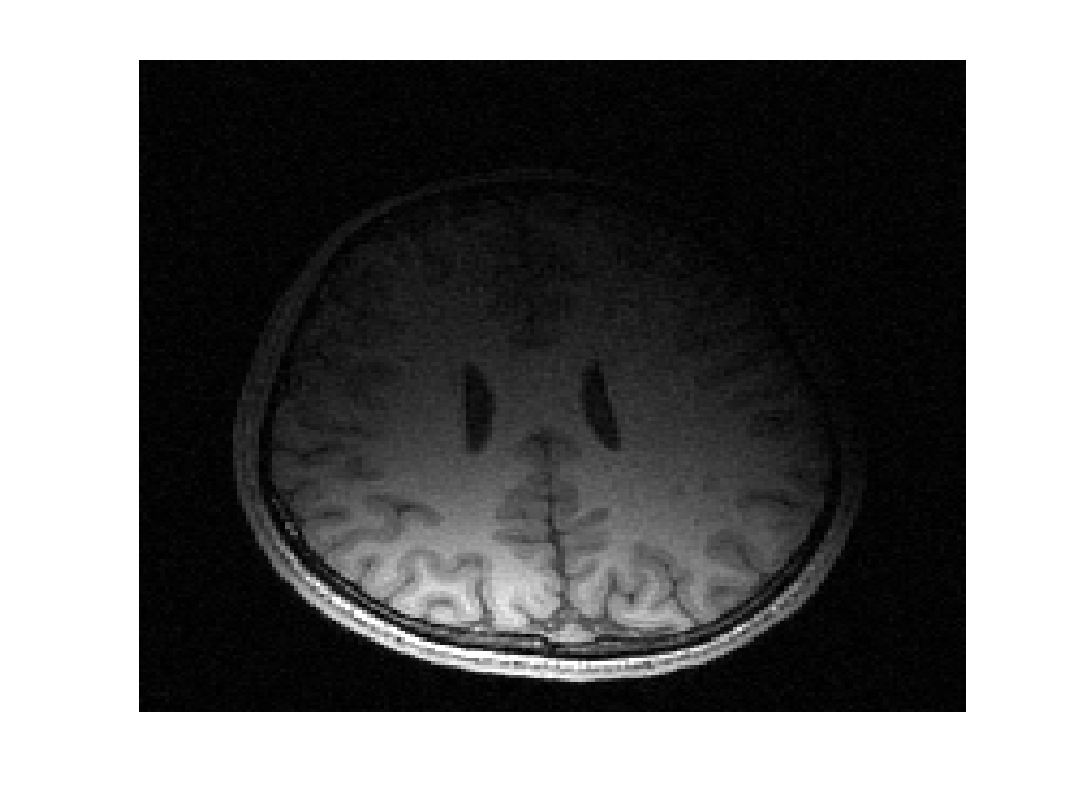} \hspace*{-5mm}
	\includegraphics[width=19mm,height=19mm]{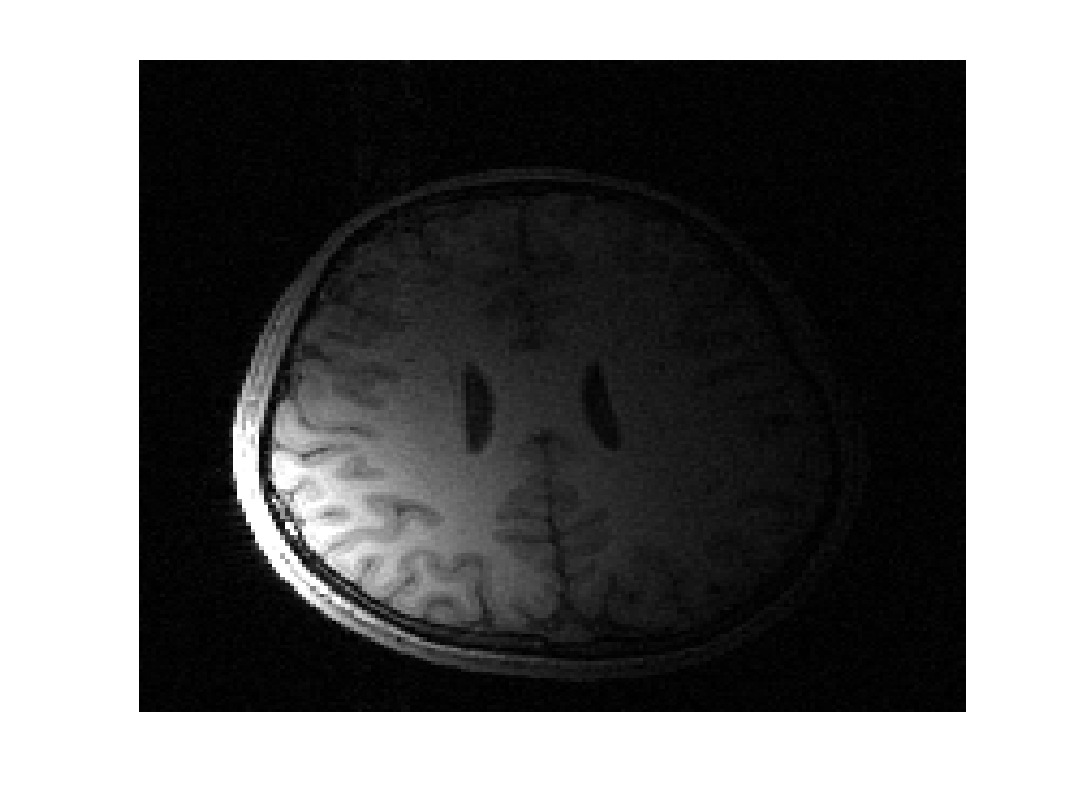} \hspace*{-5mm}
	\includegraphics[width=19mm,height=19mm]{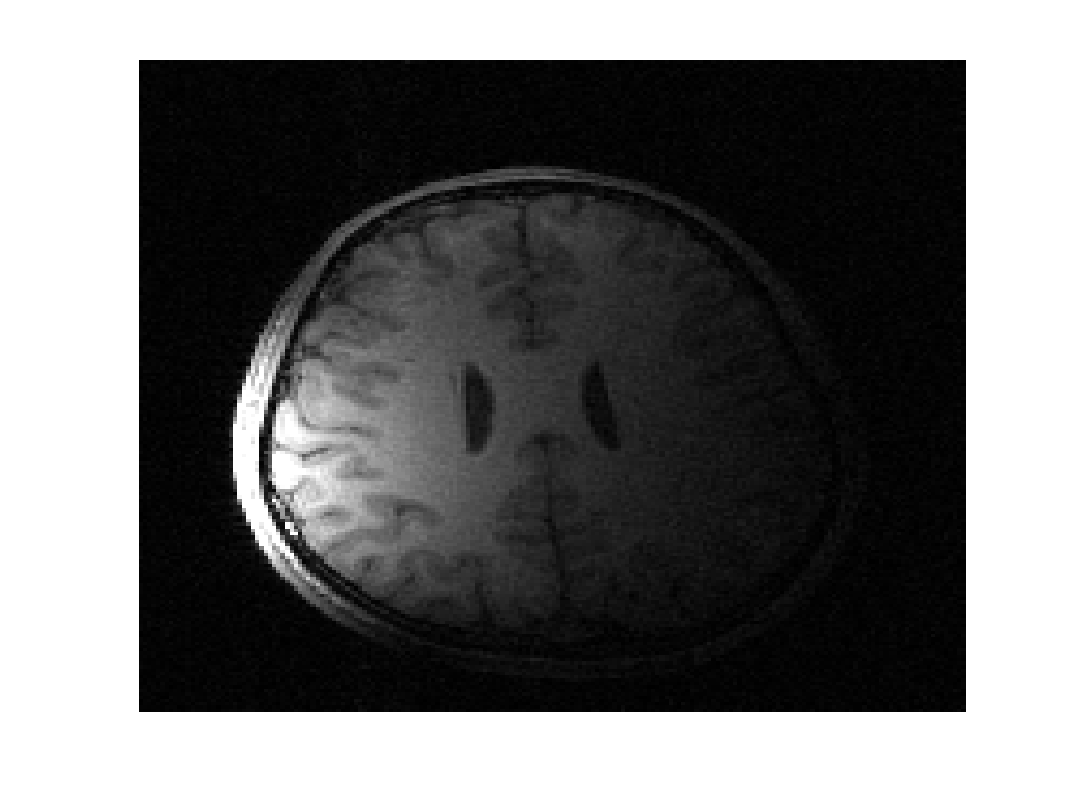} \hspace*{-5mm}
	\includegraphics[width=19mm,height=19mm]{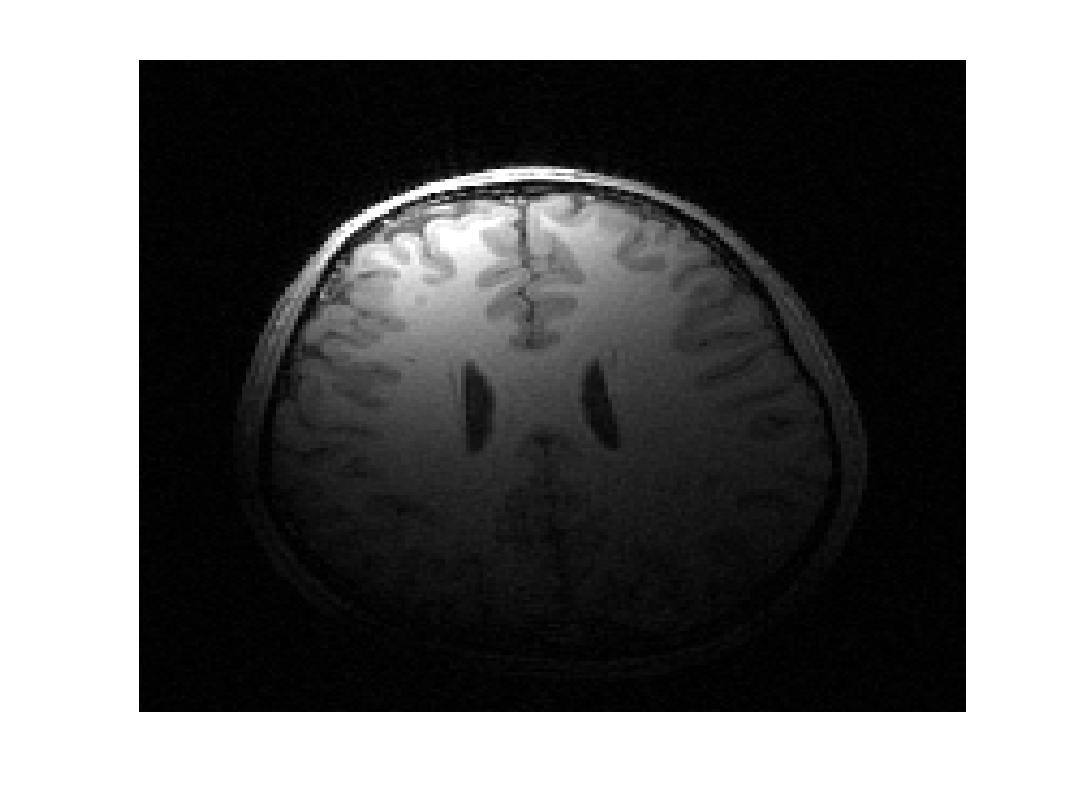} \hspace*{-5mm}
	\includegraphics[width=19mm,height=19mm]{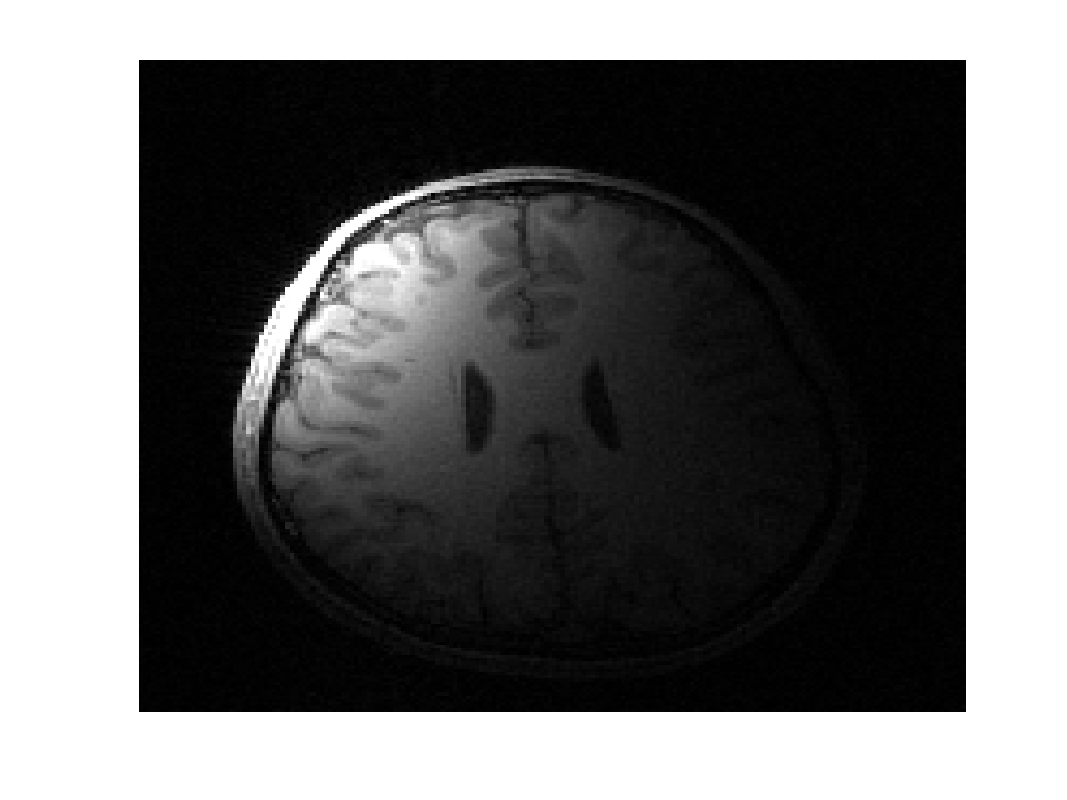} \hspace*{-5mm}
	\includegraphics[width=19mm,height=19mm]{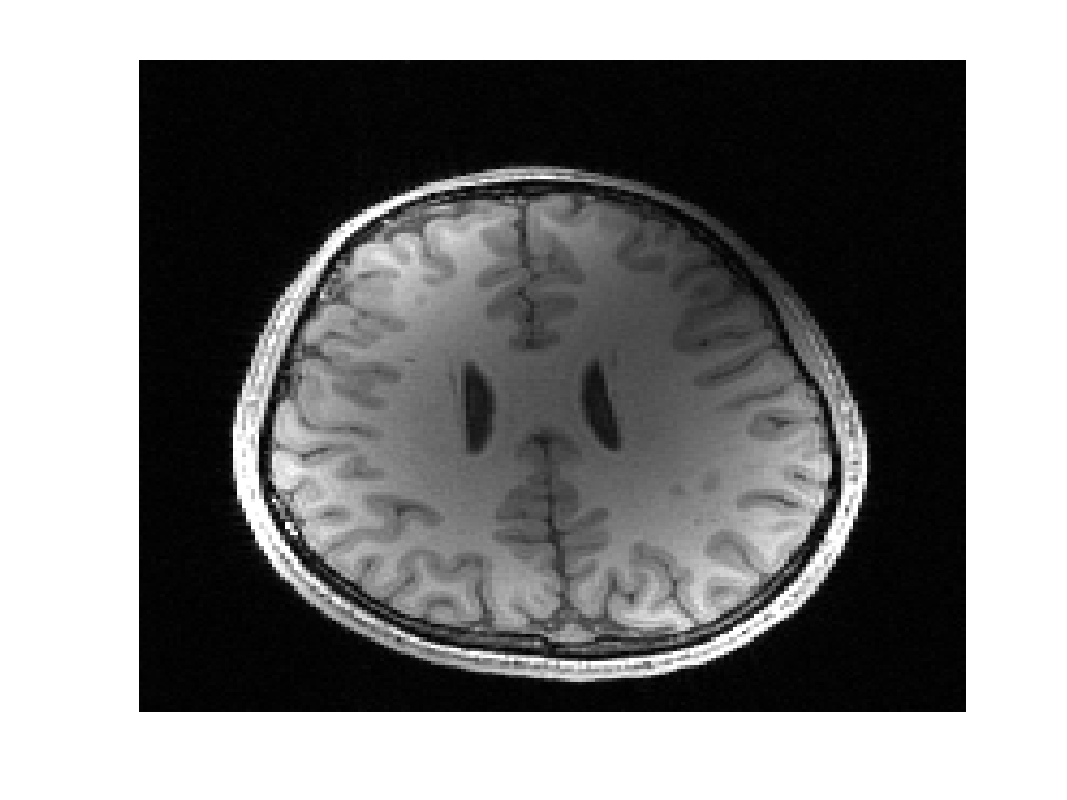} \hspace*{-5mm}
\end{center}
\caption{Magnitudes of the coil images for the two used test data sets together with the full sos reconstruction at the end.}
\label{coils}
\end{figure}

The $8$ coil images in the first data set are of size $200 \times 200$, i.e., $N=200$. From  the 
second data set we have taken $8$ coils of size $192 \times 192$.
We compare the reconstruction results of our MOCCA algorithm with the results of GRAPPA, SPIRiT, ESPIRiT, L1-ESPIRiT and JSENSE using the peak-signal-to-noise ratio (PSNR) and the structural similarity index measure (SSIM) (using the internal functions in Matlab), where the sos reconstruction from complete $k$-space data is taken as the ground truth.

The performance of the MOCCA  algorithm depends on the chosen support $\Lambda_L$ of the trigonometric polynomials determining the coil sensitivities.  If we take the iterative Algorithm \ref{alg3} to solve the least squares problem, we can profit from using a suitable number of  iteration steps. 
Further, the results of the MOCCA Algorithm \ref{alg2} can be improved by employing the smoothing procedure  proposed in Section \ref{smoothing}.
The size of the calibration area is always taken similarly as for the other algorithms. 

\noindent
 We start with reconstructions from $k$-space data of the first data set, where outside of the ACS region the following scheme for acquired incomplete data with reduction rate $R$ is taken:\\
every second column, $R=2, \,  (1, 1/2)$,\\
every third column, $R=3, \,  (1, 1/3)$,\\  
every fourth column, $R=4, \, (1,1/4)$, \\
every second row and every second column, $R=4,\,  (1/2, 1/2)$, \\
every  second row and every third column, $R=6,\,  (1/2, 1/3)$. \\
The results are summarized in Table 1.

For  MOCCA we have used $\beta=0.001$ and  a fixed number of iterations in Algorithm \ref{alg3}. 
Further, the support $\Lambda_L$ of the coil sensitivities  in $k$-space is given by 
$\Lambda_5= \{-2, -1,0,1,2\} \times \{-2, -1,0,1 , 2\}$, i.e., $L^2=25$ parameters are used per coil sensitivity. 
In Figure \ref{figsing1}(left), we  display the singular values of the MOCCA-matrix ${\mathbf A}_M$ in (\ref{AM}) obtained for the first data set with $L=5$. The decay of the singular values of ${\mathbf A}_M$ shows that, corresponding to the theoretical observations in Sections \ref{sec:incomplete} and \ref{sec:ana}, the smallest singular value of ${\mathbf A}_M$ is close to $0$. 
For this first data set and $L=5$, the  singular vector corresponding to the smallest singular value 
can be directly taken to reconstruct  the coil sensitivities. 
In our results given in Table \ref{tab:1}, we have used a linear combination of two singular vectors ${\mathbf c} = {\mathbf c}_1 +\alpha {\mathbf c}_2$ in order to improve the reconstruction result,
where  ${\mathbf c}_1$ corresponds to the smallest singular value $0.1537$ and ${\mathbf c}_2$  to  the second smallest value $0.1803$. For $R=2,3,4$ we have employed $\alpha=0.2$ and for $R=6$, we used $\alpha={\mathrm i}$.

In Table \ref{tab:1} we present the  results for MOCCA and  MOCCA-S (MOCCA with a smoothing step as in Section \ref{smoothing})  obtained by  Algorithm \ref{alg4} and by the iterative Algorithm \ref{alg3}, where the employed number of iterations depends on $R$ and
is indicated in Table \ref{tab:1} with the numbers in brackets. For example, for $R=2$, we used 12 iterations. 
For the smoothing step, we used $\lambda=0.00045$ for $R=2$, $\lambda=0.0018$ for $R=3$ and $R=4\,  (1,\frac{1}{4})$, $\lambda=0.0015$ for $R=4 \, (\frac{1}{2},\frac{1}{2})$, and $\lambda= 0.0035$ for $R=6$.
Interestingly, the recovery results for the direct fast MOCCA Algorithm  \ref{alg4} outperform the iterative  MOCCA Algorithm \ref{alg3}  for the cases $R=2$, $R=3$, and works equally well for $R=4 (\frac{1}{2}, \frac{1}{2})$, while being much worse for the other cases. We argue that, since the direct algorithm solves small independent linear systems, and does not directly exploit the full data knowledge in the ACS region, local model inconsistencies in the data directly lead to larger local solution errors for higher reduction factors. Note that the performance of MOCCA further improves for larger $k$-space support of sensitivities, i.e.,  $L>7$.

For GRAPPA and SPIRiT we applied the  implementation {\tt demo\_l1\_spirit\_pocs.m} in the ESPIRiT Matlab toolbox 
with parameters given there for this data set, i.e., SPIRiT kernel size $kSize = [5,5]$, 
$nIter = 20$ iterations,
Tykhonov regularization parameter $CalibTyk = 0.01$  in the calibration, and 
$wavWeight = 0.0015$ for wavelet soft-thresholding regularization in the reconstruction.
For ESPIRiT, we have taken {\tt demo\_ESPIRiT\_recon.m} from the ESPIRiT toolbox with 
$ncalib = 24$ ($24 \times 24$ calibration area), kernel window size $ksize = [6,6]$,  $eigThresh\_k = 0.02$, 
$eigThresh\_im = 0.9$, and two sets of eigenmaps.
For L1-ESPIRiT, we used {\tt demo\_ESPIRiT\_L1\_recon.m} from the ESPIRiT toolbox, which had been created for  this data set with the ESPIRiT parameters as above and the setting for the $L1$-reconstruction with $nIterCG = 5$  CG iterations for the PI part,
$nIterSplit = 15$ splitting iterations for CS part,
$splitWeight = 0.4$, and  
$lambda = 0.0025$ for $R=2$ and optimized parameters $splitWeight = 0.2$, $lambda = 0.7$ for $R>2$.
For JSENSE we used the software  by the authors of \cite{JSENSE} and tried to adapt the number of iterations for alternating minimization as well as the  polynomial degree to approximate the sensitivities for optimal reconstruction results  in Table \ref{tab:1}. The original implementation only works, if the  image dimension is  a multiple of the considered reduction rate, and if the data in only one direction are non-acquired. For $R=3$ we have therefore adapted the number of columns on the coil data ${\mathbf y}^{(j)}$ from $200$ to $198$. Unfortunately, due to the required alternating minimization steps, JSENSE is more time consuming than the other algorithms.

\begin{table}[htbp]
\scriptsize
\caption{Comparison of the reconstruction performance for the incomplete data from the first data set.}\label{tab:1}
\begin{center}
\begin{tabular}{rrrrrrr}
\hline
method & measure & $R=2, (1,\frac{1}{2})$ & $R=3, (1,\frac{1}{3})$  & $R=4, (1,\frac{1}{4})$ & $R=4, (\frac{1}{2},\frac{1}{2})$ & $R=6, (\frac{1}{2},\frac{1}{3})$ \\
\hline
GRAPPA & PSNR &  {41.4367} & 36.2093 & 30.3212 &34.8741 &  28.9493 \\
 & SSIM & {0.9638} & 0.9039 & 0.7561 & 0.8821 &  0.7236 \\
SPIRiT & PSNR & 26.7296 & 28.3496 & 26.7414 & 30.8745 &  29.6801 \\
  & SSIM & 0.9441 & 0.8794 & 0.7192 & 0.8871 &  0.7307 \\
ESPIRiT & PSNR & 37.2218  & 35.1977 & 32.0629 & 35.4497 & 32.6245 \\
 & SSIM & 0.8138  & 0.7808 & 0.7022 & 0.7848 &  0.7164 \\
L1-ESPIRiT & PSNR & 37.3810  
& 35.9555 &  \textbf{34.1941} & 35.9088 & 33.5887 \\
 & SSIM &  0.8279 
& 0.7689 & 0.7612 & 0.7714 & 0.7589 \\
JSENSE & PSNR & 33.5558 & 34.0555 & 30.8670 & & \\
 & SSIM & 0.8723 & 0.8665 & 0.7818 & & \\
 \hline
MOCCA & PSNR & (12)38.7136& (40)35.1875  & (75)32.0755 & (50)35.6563 & (90)32.0203 \\
(L=5) & SSIM & 0.9119 & 0.8795 &  0.8111 & 0.8896 & 0.7995\\
MOCCA-S & PSNR & (12)39.4055 & (40){36.7334} & (75){33.2165} & (50)\textbf{37.2826} & (90)\textbf {33.6516} \\
(L=5) & SSIM          & 0.9241           & {0.9301}      & \textbf{0.8845} & \textbf{0.9354}       & \textbf{0.8897}\\
\hline
MOCCA & PSNR     & 41.3746.        & 35.1676               & 29.1279 & 35.0675         & 27.8404 \\
direct (L=5) & SSIM & 0.9643          & 0.8859                  &  0.6491 & 0.8815            & 0.6078\\
MOCCA-S & PSNR & \textbf{42.1886}        & \textbf{37.4517}    & 32.2398& 36.7925           & 29.3610 \\
direct (L=5) & SSIM & \textbf{0.9717} & \textbf{0.9369}              & 0.7876   & {0.9294} & 0.7119\\
 \hline
\end{tabular}
\end{center}
\end{table}

In Figure \ref{figure_Bild1}, we exemplarily represent the reconstruction results for the reduction factor $R=3$ (every third column acquired) and the corresponding error maps. Further, in  Figure \ref{fig:coil1}, we illustrate the magnitude and the phase of the 8 normalized coil sensitivities $\tilde{\mathbf s}^{(j)}$ obtained for the first data set.  Before pointwise multiplication with $\sign(m_{\nn})$, the sensitivities have smooth magnitude and phase, since they are samples of pointwisely normalized trigonometric polynomials of small degree, i.e., samples of trigonometric rational functions.

\begin{figure}[h!]
\begin{center}
        \includegraphics[width=28mm,height=28mm]{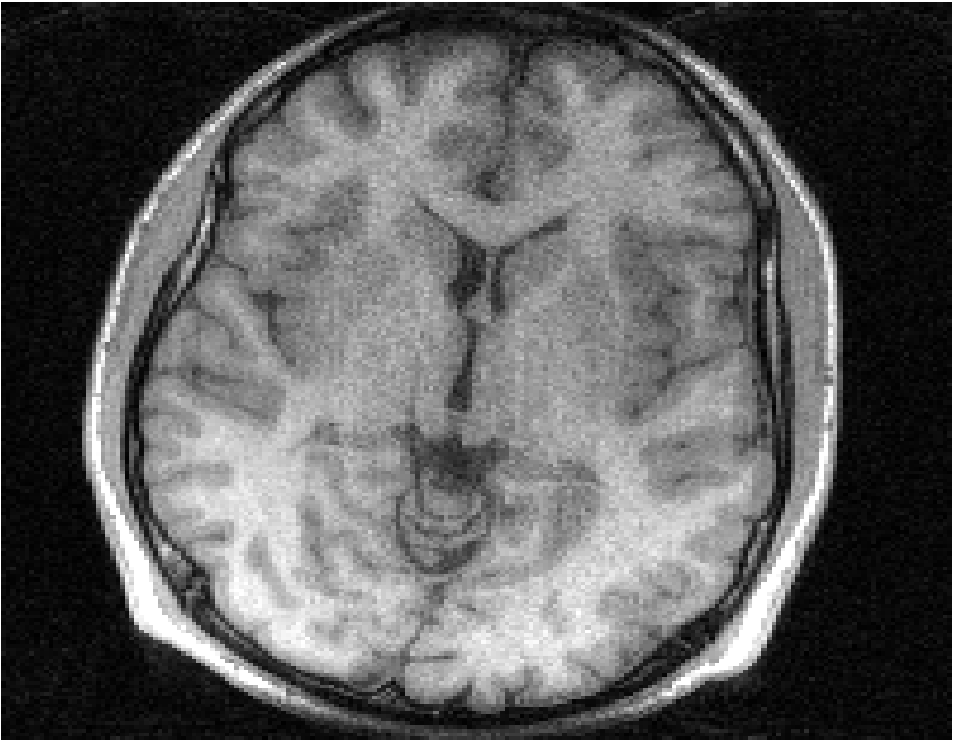}
	\includegraphics[width=28mm,height=28mm]{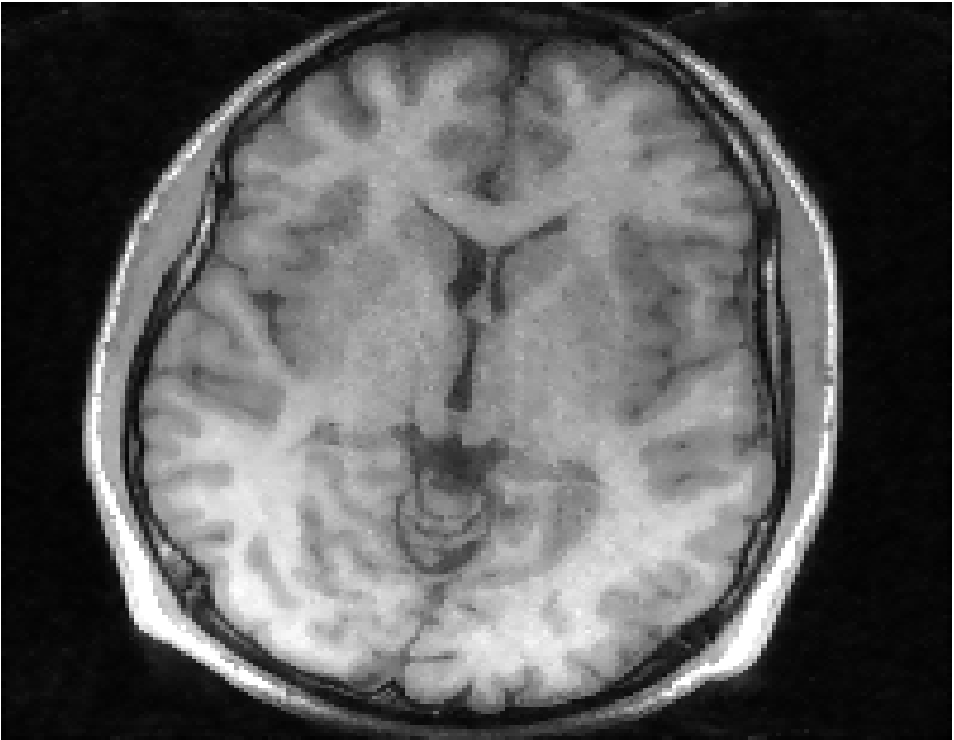}
	\includegraphics[width=28mm,height=28mm]{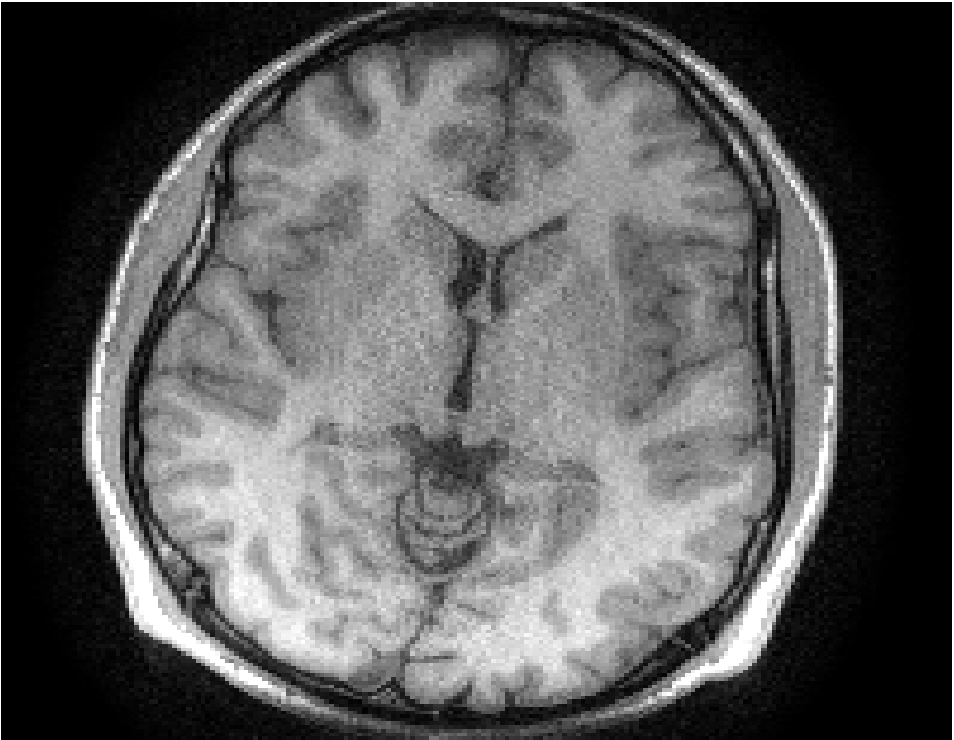}
	\includegraphics[width=28mm,height=28mm]{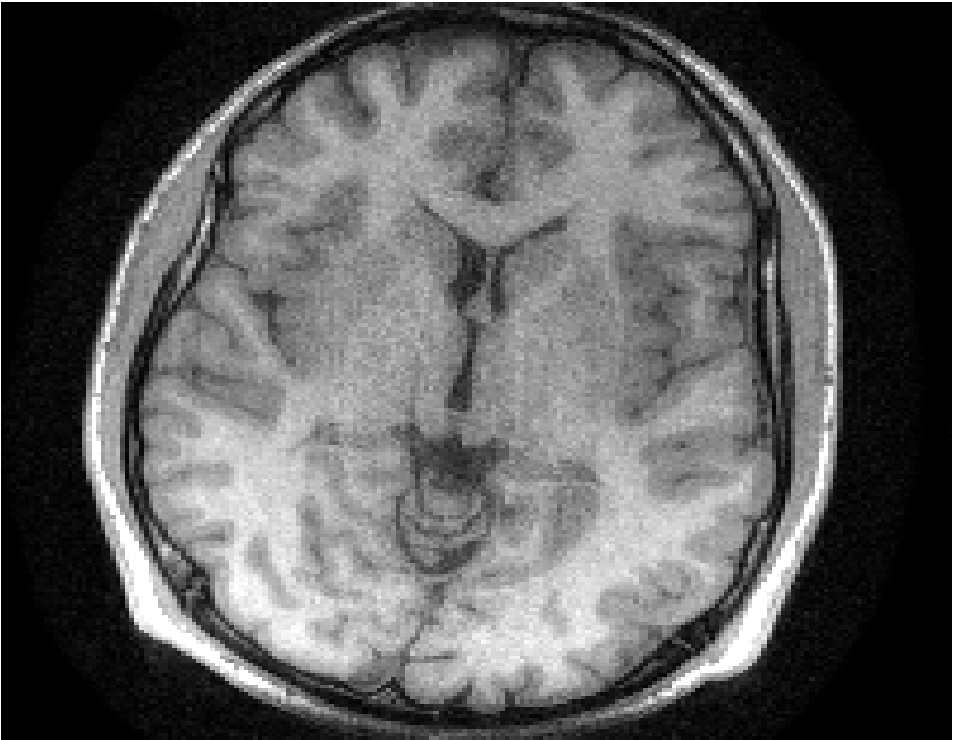}
	\includegraphics[width=28mm,height=28mm]{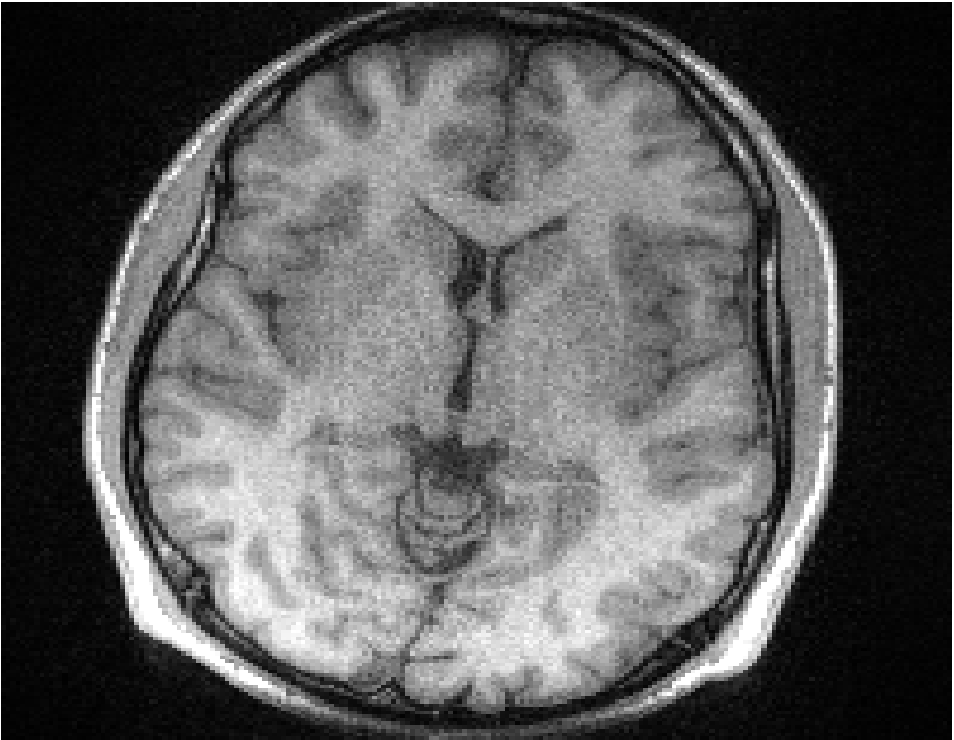}\\
        \includegraphics[width=28mm,height=28mm]{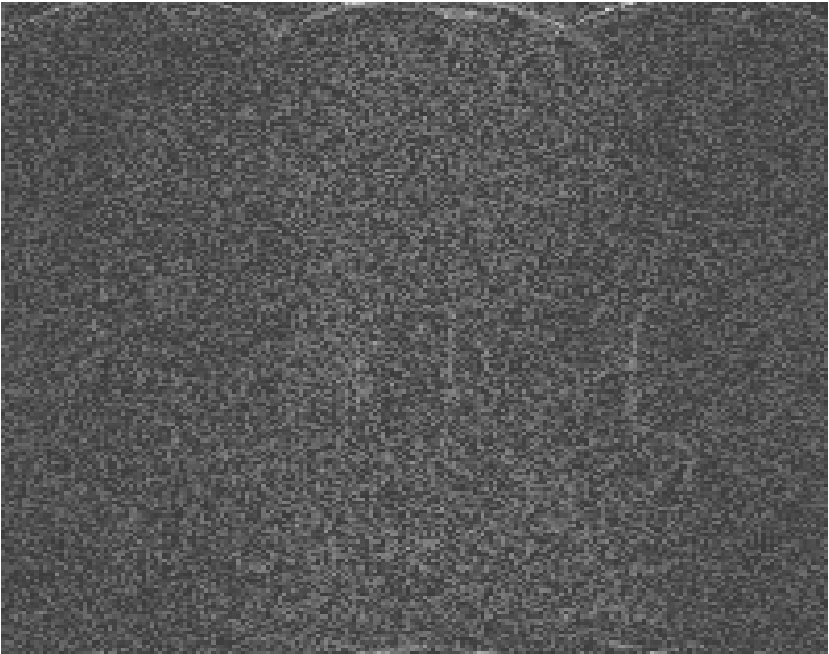} 
	\includegraphics[width=28mm,height=28mm]{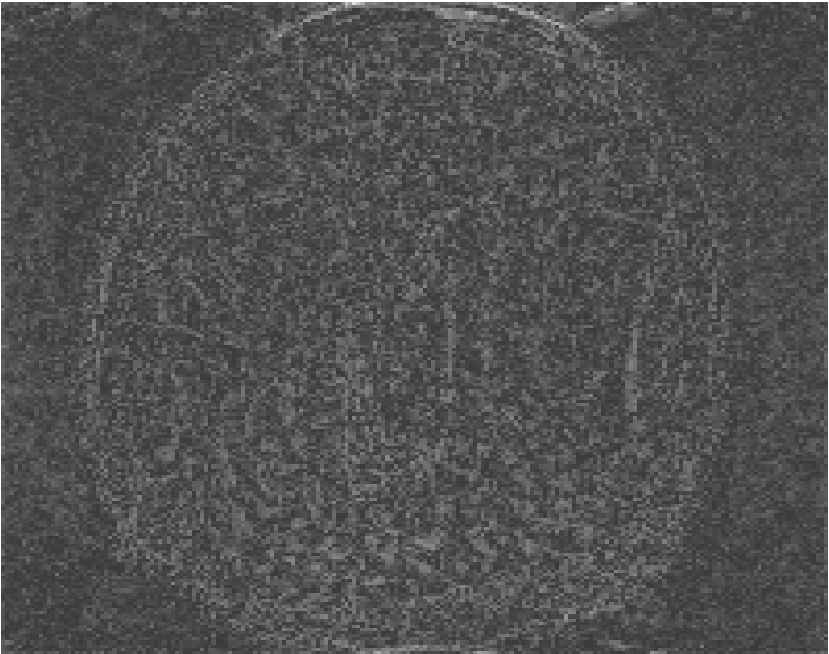}
	\includegraphics[width=28mm,height=28mm]{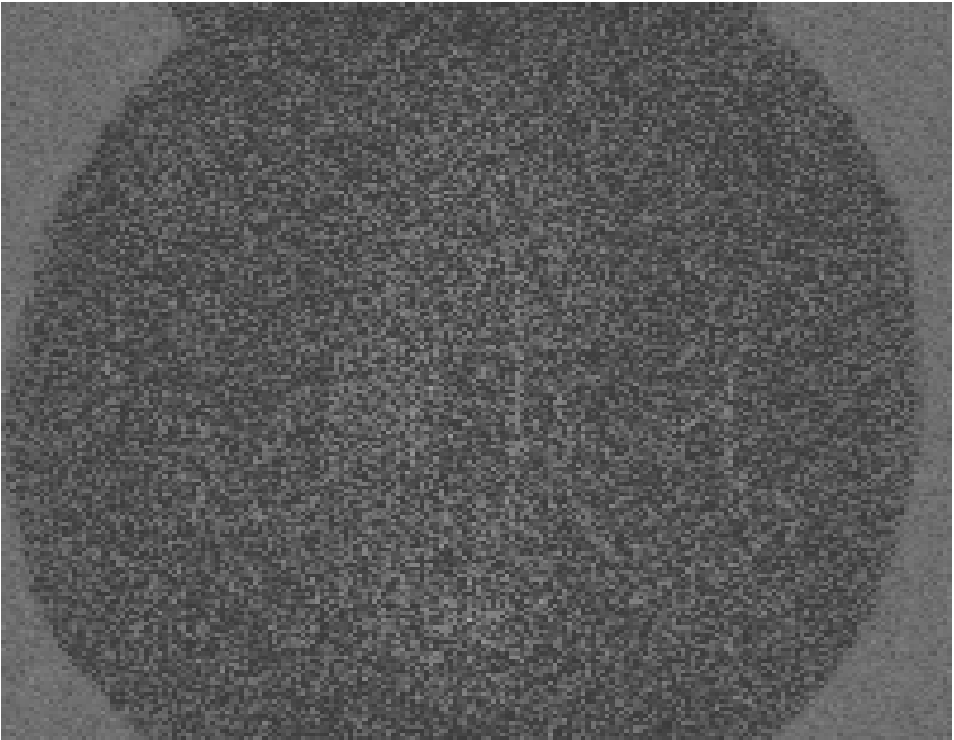}
	\includegraphics[width=28mm,height=28mm]{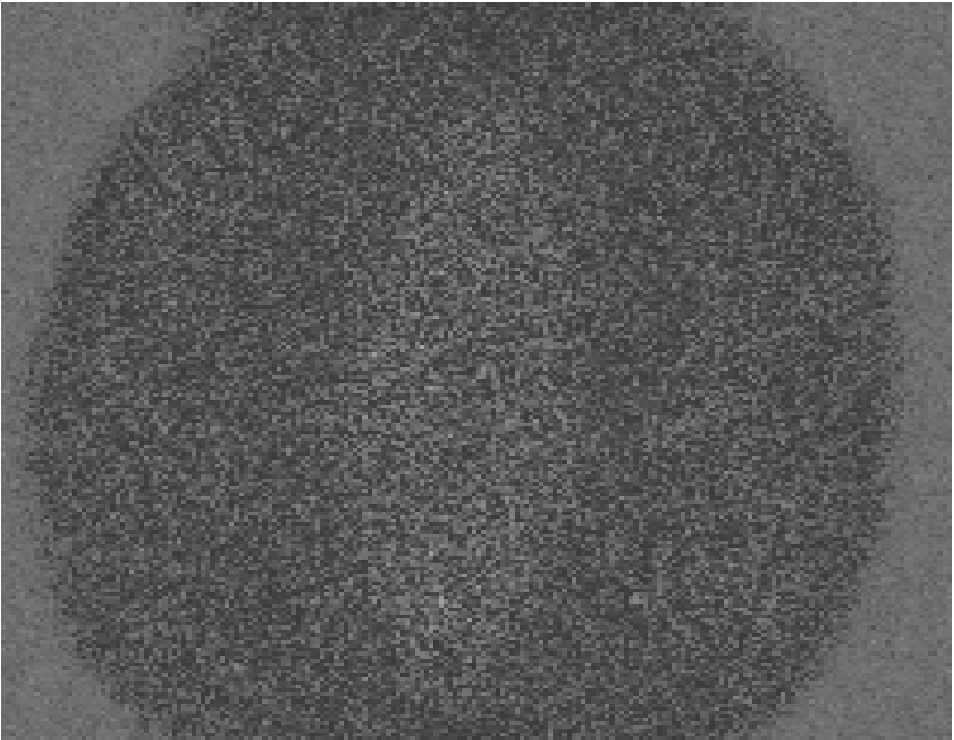}
	\includegraphics[width=28mm,height=28mm]{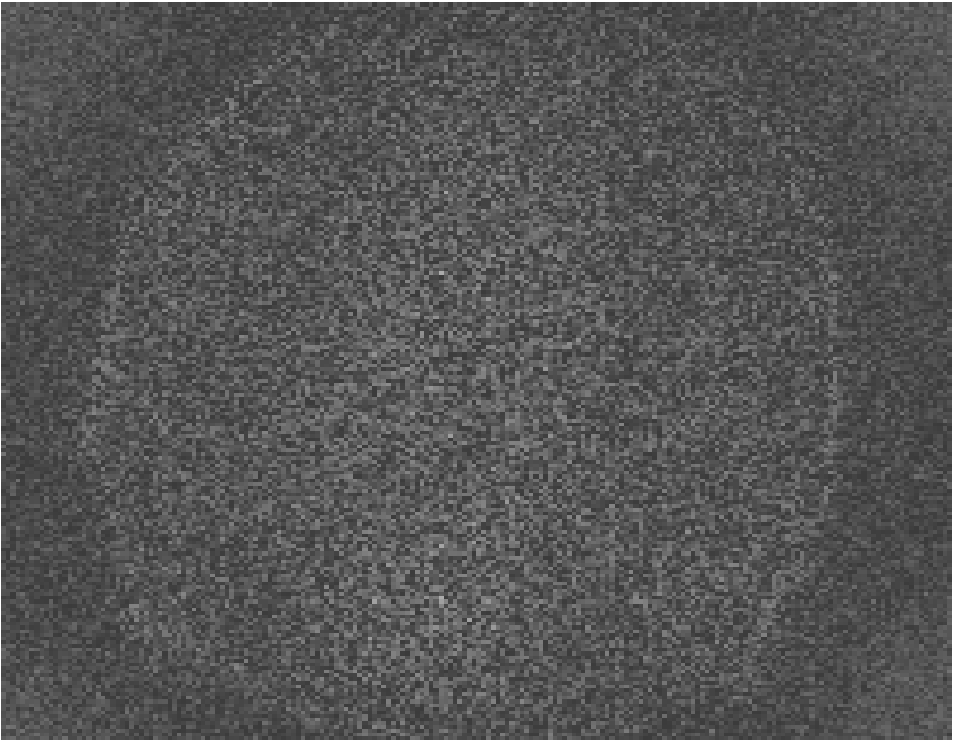}
	\end{center}
\caption{Reconstruction results for the first data set obtained  from a third of the $k$-space data of 8 coils (every third column acquired) for different methods. From left to right: (1)  MOCCA  using Algorithm \ref{alg2} (taking Algorithm \ref{alg3} with 40 iterations),  (2) MOCCA-S (with 40 iterations and smoothing), (3) L1-ESPIRiT, (4) ESPIRiT and (5) GRAPPA. Corresponding  error maps are given  below. All error images use the same scale with relative error in $[0,0.12]$, where $0$ corresponds to black and $0.12$ to white.}
\label{figure_Bild1}
\end{figure}

\begin{figure}[h!]
\begin{center}	
        \includegraphics[width=65mm,height=26mm]{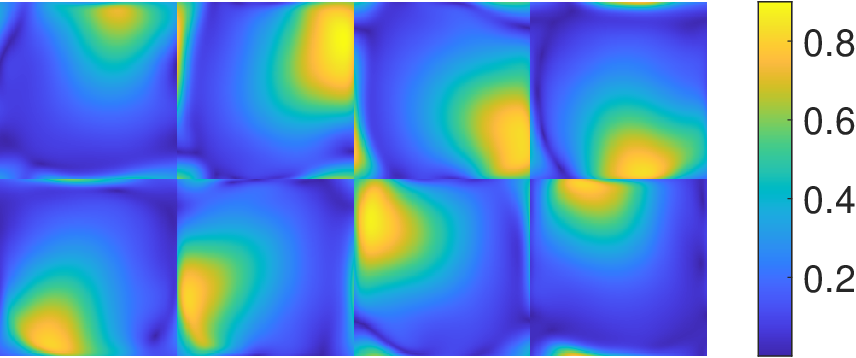} \hspace{5mm}
	\includegraphics[width=65mm,height=26mm]{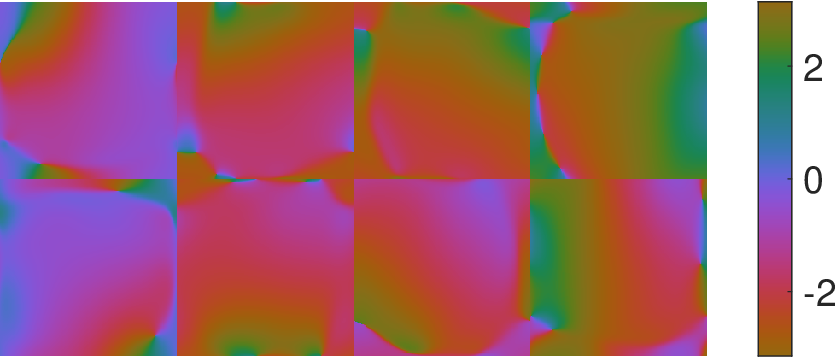}
	 \includegraphics[width=65mm,height=26mm]{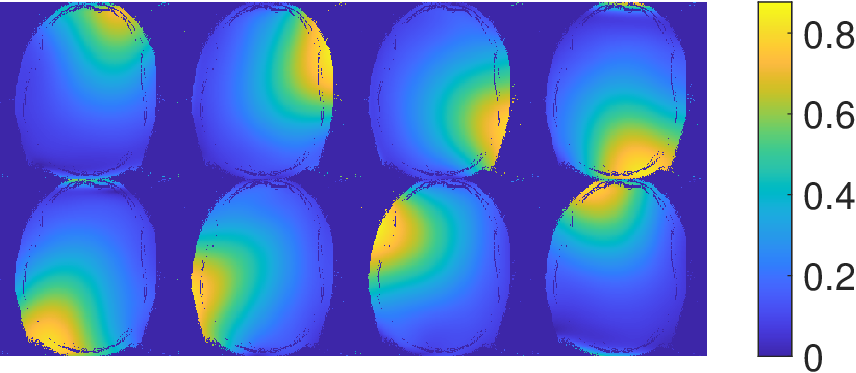} \hspace{5mm}
	\includegraphics[width=65mm,height=26mm]{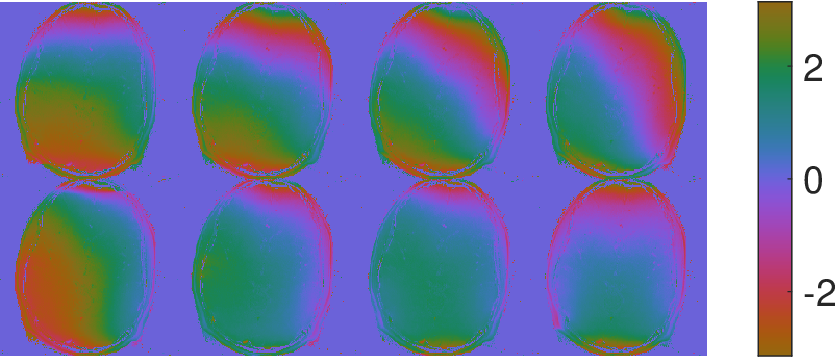}
\end{center}
\caption{Magnitude (left) and phase (in $[-\pi,\pi]$) (right) of the $8$ coil sensitivities obtained for MRI reconstruction for $L=5$ for the first data set before and after multiplication with $\sign(\mm)$, see step 7 of Algorithm \ref{alg2}. The normalized sensitivities  are samples of bivariate trigonometric polynomials with $L^2=25$ nonzero coefficients, pointwisely multiplied with the normalization factors $({\mathbf d}^{+})^{\frac{1}{2}}$ to ensure the sos condition. 
}
\label{fig:coil1}
\end{figure}

For the second data set, we want to illustrate  that the chosen support size of the trigonometric polynomials to evaluate the coil sensitivities plays an important role for obtaining very good reconstruction results. We compare with GRAPPA, ESPIRiT 
with parameters fixed as for the first data set, for L1-ESPIRiT with optimized parameters $splitWeight = 0.1$, $lambda = 0.0002$, 
and also show the results of MOCCA for different support sizes in Table \ref{tab:2}. For MOCCA, the numbers in brackets give the numbers of  iterations in Algorithm \ref{alg3}. 
 As before, MOCCA-S employs the smoothing procedure in Section \ref{smoothing} as a post-processing step, where the parameter $\lambda$ is taken in dependence of $R$. We used $\lambda=0.00015$ for $R=2$, $\lambda=0.0005$ for $R=3$ and $R=4 (\frac{1}{2},  \frac{1}{2})$, 
$\lambda=0.0013$ for  $R=4 (1,  \frac{1}{4})$ and $R=6$. The results MOCCA direct  and MOCCA-S direct are obtained by Algorithm \ref{alg4}. 
As before, Algorithm \ref{alg4} outperforms Algorithm \ref{alg3} for  $R=2, 3$, and $R=4 (\frac{1}{2}, \frac{1}{2})$ and is worse for the other cases. 

Since the parallel MRI data do not exactly fit our model with the chosen support sizes, the MOCCA matrix ${\mathbf A}_M$ obtained from the second data set possesses several very small singular values, see Figure \ref{figsing1}.
While for the first data set with $L=5$ and $M=20$ (i.e., ACS area of size $24 \times 24$), the matrix
${\mathbf A}_{M}$ contains only two singular values being smaller than $\frac{\sigma_1}{100}$ (where $\sigma_1$ denotes the largest singular value), we observe for the second data set (with $M=20$ and $L=5$) that already $11$ values are below this bound. For support size $L=7$ we have  $45$ values smaller than $\frac{\sigma_1}{100}$, and for $L=9$, even $139$ values. 

\begin{figure}[h!]
\begin{center}
        \includegraphics[width=35mm,height=30mm]{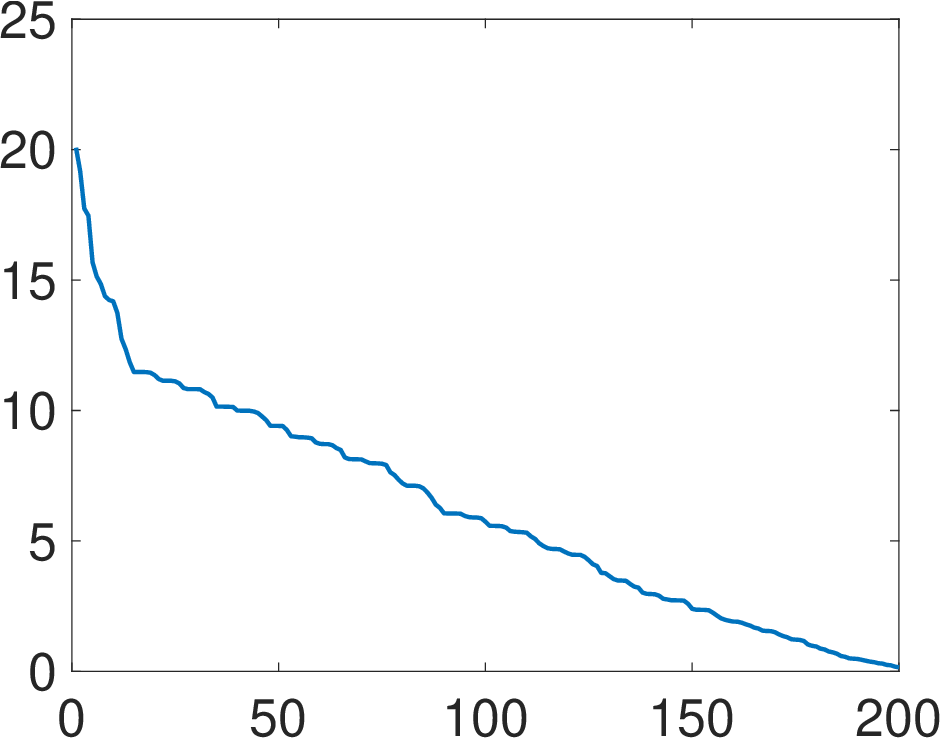}
        \includegraphics[width=35mm,height=30mm]{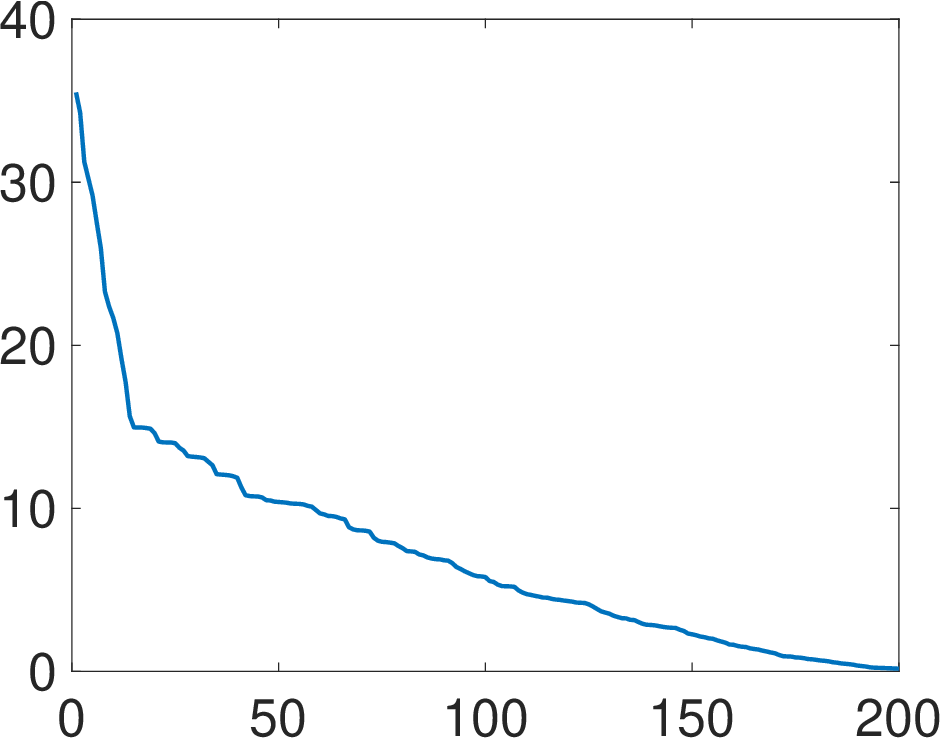}
         \includegraphics[width=35mm,height=30mm]{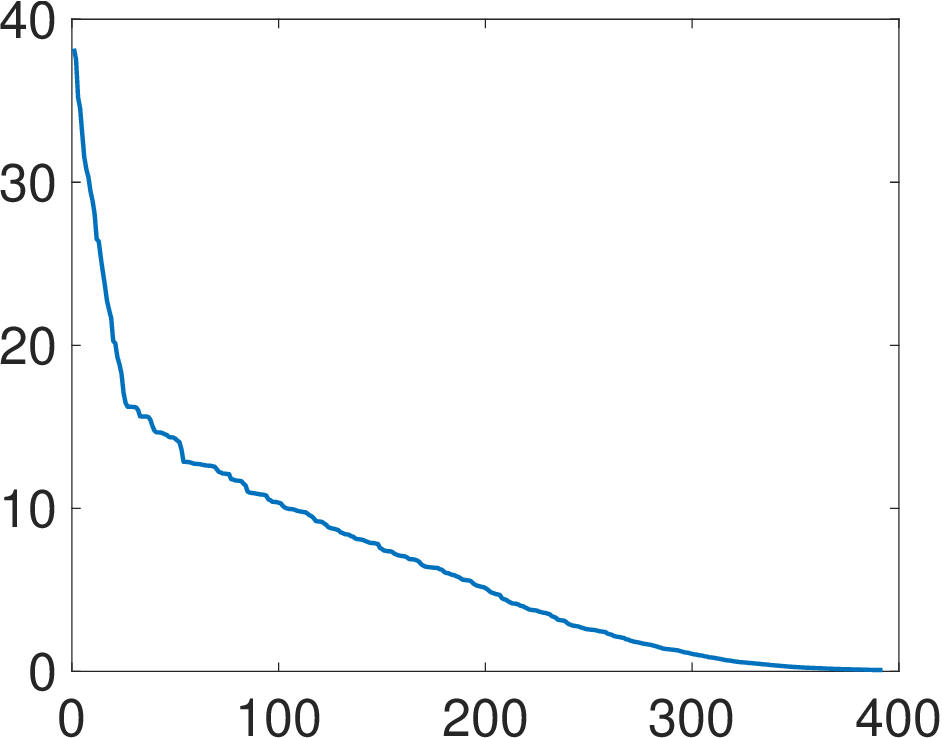}
	\includegraphics[width=35mm,height=30mm]{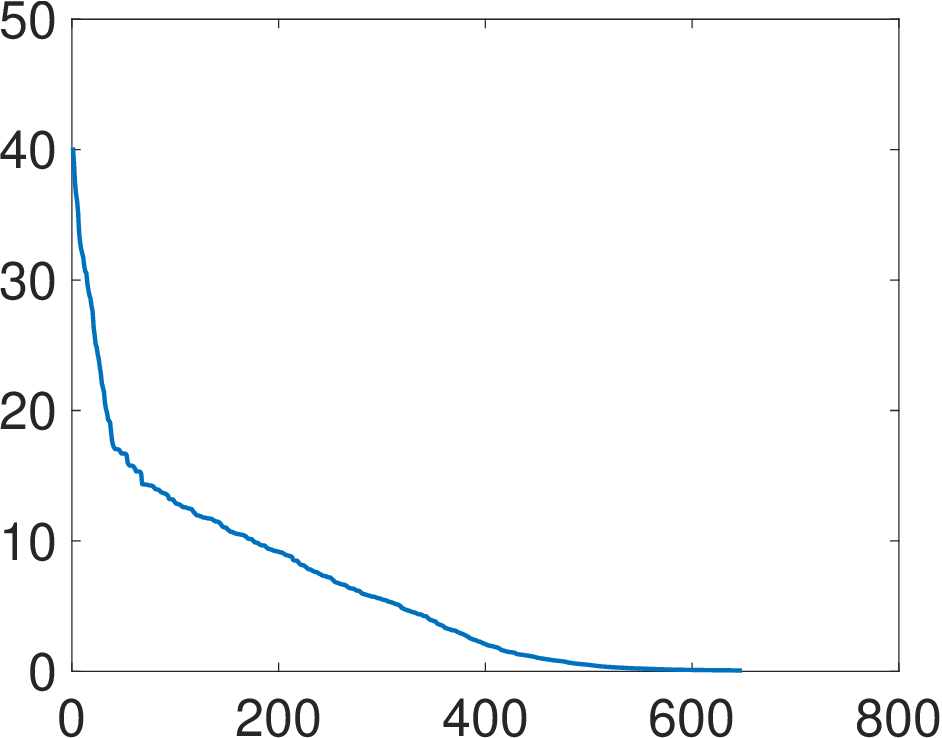}
\end{center}	
\caption{Illustration of the singular values of the matrix ${\mathbf A}_M$ for  the first and the second data set.
From left to right: (1) Singular values of ${\mathbf A}_M$ with $L=5$ and $25\cdot 8=200$ columns for the first data set. 
(2) Singular values of ${\mathbf A}_M$  with  $L=5$ and $25\cdot 8=200$ columns, (3) with $L=7$ and $49\cdot8= 392$ columns, and (4) with 
$L=9$ and $81\cdot 8=648$ columns 
for the second data set.
 }
\label{figsing1}
\end{figure}

Therefore, to compute the coefficient vector ${\mathbf c}$ (see step 2 of Algorithm \ref{alg2}), we propose to take a suitable linear combination of several singular vectors corresponding to the smallest singular values of ${\mathbf A}_M$. 
More exactly, for reconstruction from the second data set, we employ a linear combination of the form 
\begin{align}\label{cc} \textstyle {\mathbf c} = \sum\limits_{\nu=1}^{N_s} \alpha_{\nu} {\mathbf c}_{\nu}
\end{align}
with $N_s \ge 1$, $\alpha_{\nu} \in {\mathbb C}$ and with ${\mathbf c}_{\nu}$ being a singular vector of ${\mathbf A}_M$ corresponding to its $\nu$-th smallest singular value. 
Our numerical experiments have shown that this linear combination should be taken such that the final vector 
${\mathbf c} = ({\mathbf c}^{(j)})_{j=0}^{N_c-1}$ leads  via step 3 of Algorithm \ref{alg2} to sensitivities ${\mathbf s}^{(j)}$, 
where the matrix ${\mathbf d}= (d_{\nn})_{\nn \in \Lambda_N} = \sum_{j=0}^{N_c-1} \overline{\mathbf s}^{(j)} \circ  {\mathbf s}^{(j)}$ 
does not have very small entries. In other words, one should exploit the freedom to choose ${\mathbf c}= ({\mathbf c}^{(j)})_{j=0}^{N_c-1}$ in (\ref{cc}) to achieve sensitivities ${\mathbf s}^{(j)}$, which are already close to satisfying  the 
sos condition, such that all components $d_{\nn}$, $\nn \in \Lambda_N$ are of similar size.

 We have implemented the following method to compute ${\boldsymbol \alpha} = (\alpha_{\nu})_{\nu=1}^{N_s}$ determining ${\mathbf c}$ in (\ref{cc}). Let ${\mathbf A}_M= {\mathbf U} {\mathbf \Sigma} {\mathbf V}$ denote the SVD of the MOCCA-matrix ${\mathbf A}_M$ in (\ref{AM}), which is computed in step 2 of Algorithm \ref{alg2}.
 Further, let ${\mathbf V}_{L^2N_c,N_s}$ be the partial matrix of ${\mathbf V}$  containing the last 
 $N_s$ eigenvectors of ${\mathbf A}_{M}$ and  ${\mathbf w} := ({\mathbf e}_0^T,
 {\mathbf e}_0^T, \ldots ,{\mathbf e}_0^T)^T \in {\mathbb C}^{L^2N_c}$, with ${\mathbf e}_0 =(\delta_{0,k})_{k=-(L^2-1)/2}^{(L^2-1)/2}$ and the Kronecker symbol $\delta_{0,k} := 0$ for $k \neq 0$ and $\delta_{0,0} := 1$.
Then we take ${\boldsymbol \alpha}= {\mathbf V}_{L^2N_c,N_s}^* {\mathbf w}$.
The question of how to take $N_s$ suitably  and the problem of finding  better methods to choose $\alpha_{\nu}$ is still open und requires further investigations.

In Table \ref{tab:2} we have used this approach for $L=5$ with $N_s=15$ for $R=4$, $(\frac{1}{2},\frac{1}{2})$, $R=6$, and $N_s=30$ otherwise, for $L=7$ with $N_s=47$,
and for $L=9$ with  $N_s=45$ 
to achieve the presented results. 
We observe very high SSIM values for the MOCCA algorithm, which are particularly much higher than for ESPIRiT. This may be due to the fact that the errors occurring for the MOCCA reconstructions are very small also outside the boundary of the brain.

\begin{table}[htbp]
\scriptsize
\caption{Comparison of the reconstruction performance for the incomplete data from the second data set.}\label{tab:2}
\begin{center}
\begin{tabular}{rrrrrrr}
\hline
method & measure & $R=2, (1,\frac{1}{2})$ & $R=3, (1,\frac{1}{3})$  & $R=4, (1,\frac{1}{4})$ & $R=4, (\frac{1}{2},\frac{1}{2})$ & $R=6, (\frac{1}{2},\frac{1}{3})$ \\
\hline
GRAPPA & PSNR &  {47.2028} & 42.9704 & 37.6036 & 41.5961 &38.5314\\
 & SSIM &  {0.9796} & {0.9573}  & 0.9167  & 0.9498 & 0.9244\\
 \hline
ESPIRiT & PSNR & 40.3668 &39.7082 & 37.1418 & 39.6082 & 38.4716\\
                & SSIM & 0.7490 & 0.7462 & 0.7243 & 0.7461 & 0.7354 \\
                \hline
L1-ESPIRiT & PSNR 
& 40.2750 & 39.7584 & 38.1926 & 39.6515 & 38.8431 \\
 & SSIM  
  & 0.7465 & 0.7520  &0.7455 & 0.7521 & 0.7503 \\
 \hline
MOCCA & PSNR              & (10)42.6611   & (50)40.3835 & (70)34.8179 & (50)40.6865 & (90)35.9914 \\
 
 (L=5)& SSIM             & 0.9258          & 0.9474       & 0.8769          & 0.9472               & 0.8752 \\
     
 MOCCA-S & PSNR           & (10)43.2025  & (50)41.7574 & (70)35.8326 & (50)42.2273 & (90)37.5114 \\
 
 (L=5)  & SSIM                   & 0.9314        & 0.9675            & 0.9192         & 0.9679        & 0.9239   \\
 \hline
 MOCCA & PSNR      & (10)43.5172 & (50)41.9112           & (70)37.3703 & (50)41.7795 &(90)37.4211\\
 (L=7)     &  SSIM.      & 0.9251        &  0.9461                    & 0.9143          & 0.9534          & 0.9011\\
  MOCCA-S & PSNR & (10)44.0399 & (50){43.5151} & (70){38.9200} & (50){43.5679} &(90){39.3194}\\
 (L=7)& SSIM             & 0.9299         &{0.9630}        &  0.9537         & {0.9729}          &{0.9463}\\
 \hline
 MOCCA & PSNR      & (10)43.2813 & (50)41.7241 & (70)38.0292 & (50)41.2087& (90)37.9957 \\
 (L=9)& SSIM             & 0.9199        & 0.9437          & 0.9214         & 0.9489         & 0.9127\\
 MOCCA-S  & PSNR & (10)43.7645 &(50){43.1347}  & (70)\textbf{39.7013} & (50){42.6955} & (90)\textbf{40.0862}\\
 (L=9)& SSIM            &  0.9245          & 0.9597           & \textbf{0.9579}        & {0.9675}          & \textbf{0.9571}\\
 \hline
  MOCCA & PSNR      & 47.4233               & 42.2531                   & 35.8053       & 42.1402           &34.4736\\
 direct (L=7)     &  SSIM.      & 0.9844        &  0.9615                    &0.8838          & 0.9616              & 0.8293\\
  MOCCA-S & PSNR & \textbf{47.9400} & \textbf{43.8822}         & 38.0142       & \textbf{43.9457} &36.4445\\
direct (L=7)& SSIM    & \textbf{0.9866}         &\textbf{0.9764}        &  0.9374       & \textbf{0.9792}          &{0.8954}\\
 \hline
\end{tabular}
\end{center}
\end{table}

In Figure \ref{figure_Bild2}, we exemplarily represent the reconstruction results for $R=4$ (every fourth column acquired) and the corresponding error maps for the second data set. 
The  MOCCA-S  reconstructions for $L=9$ and $L=11$ in Figure \ref{figure_Bild2} do  not contain obviously visible aliasing artifacts. The GRAPPA and ESPIRiT reconstructions in Figure \ref{figure_Bild2} contain slightly visible alising artifacts.
Note that a high PSNR value not always implies a very good visual reconstruction result  since pointwise large errors are not strongly punished by this measure.

\begin{figure}[h!]
\begin{center}
 \includegraphics[width=28mm,height=28mm]{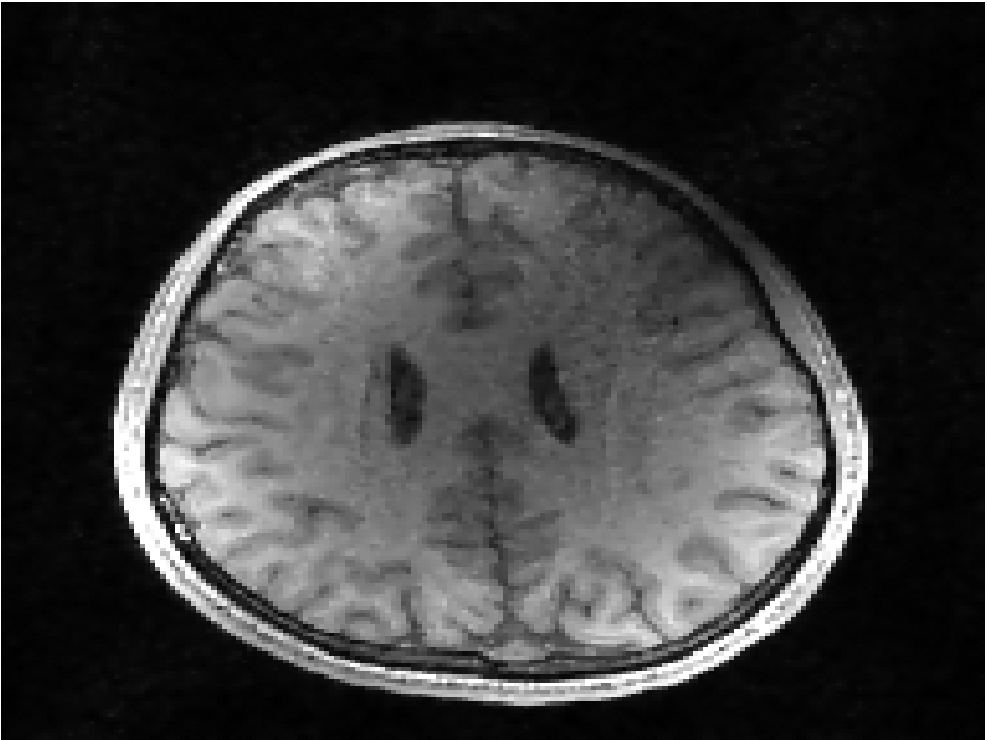}
  \includegraphics[width=28mm,height=28mm]{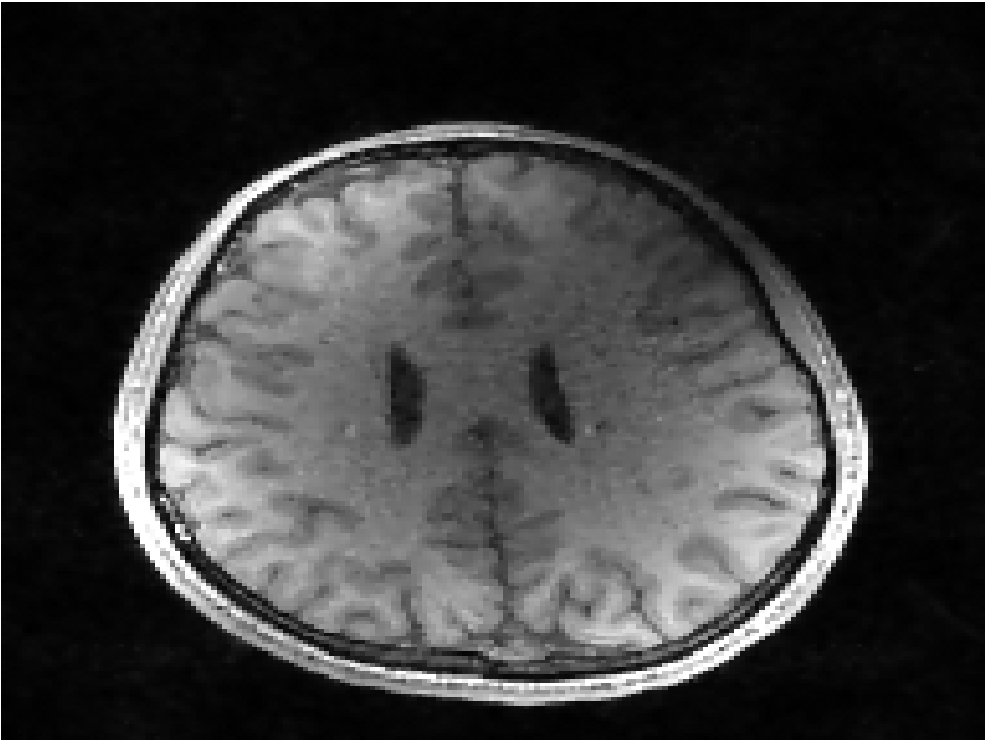}
  \includegraphics[width=28mm,height=28mm]{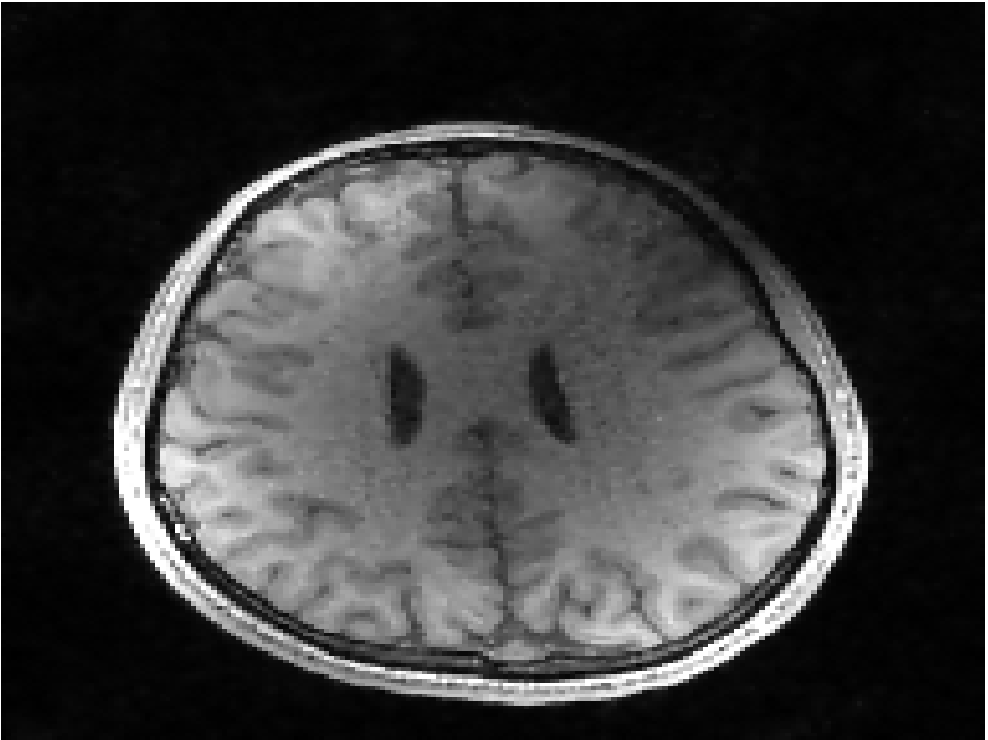}
 \includegraphics[width=28mm,height=28mm]{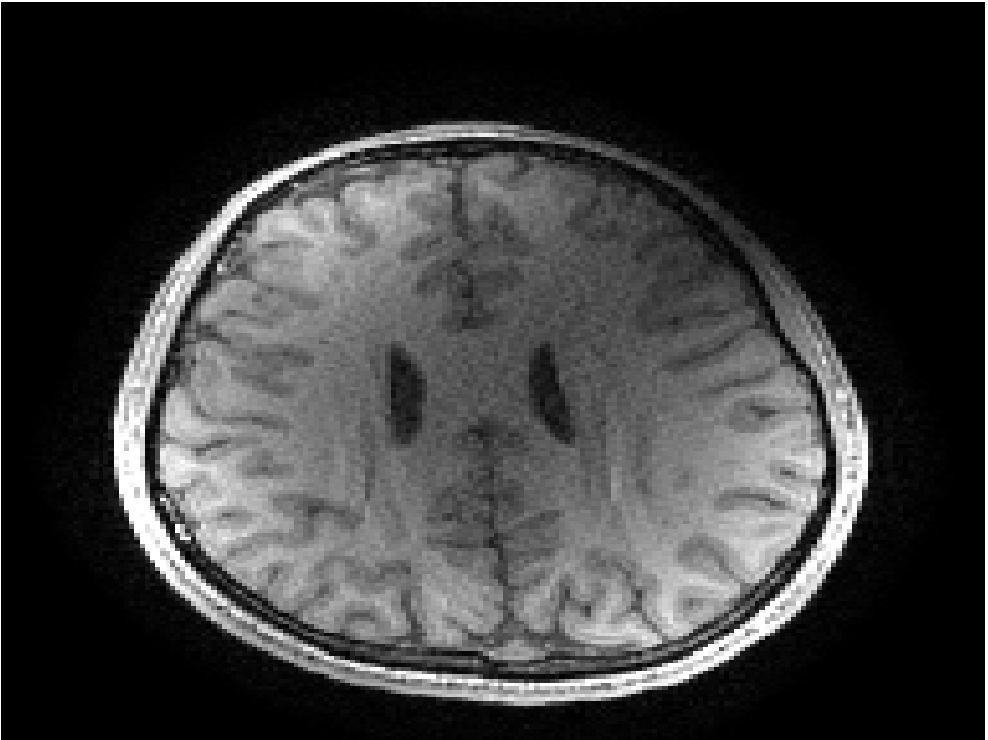}
 \includegraphics[width=28mm,height=28mm]{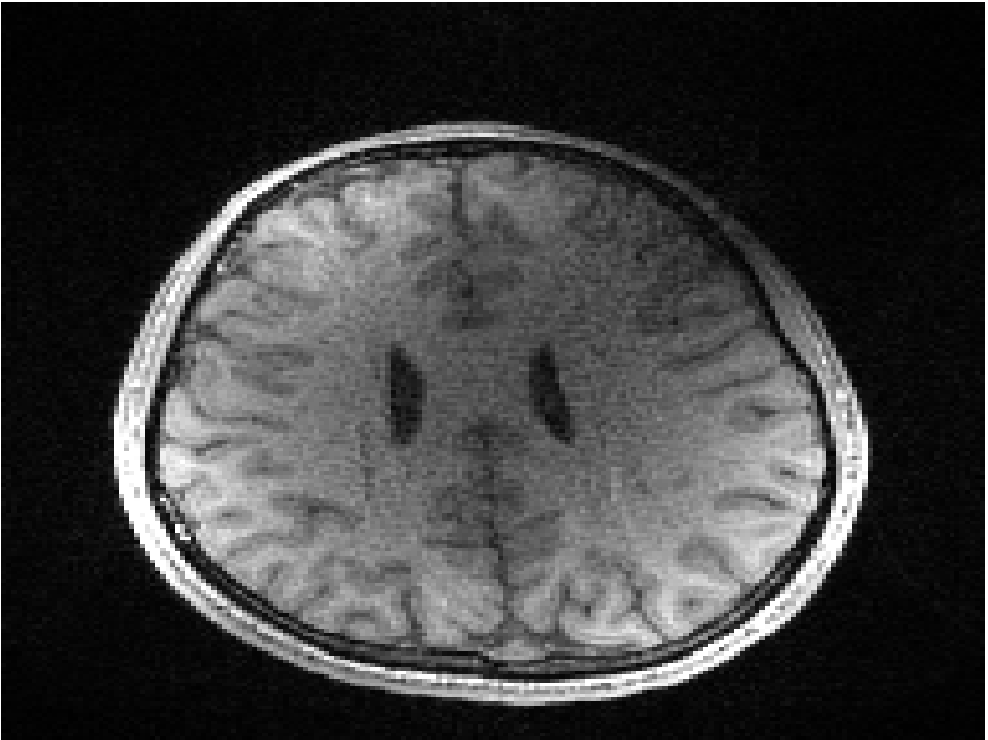}
 
 \includegraphics[width=28mm,height=28mm]{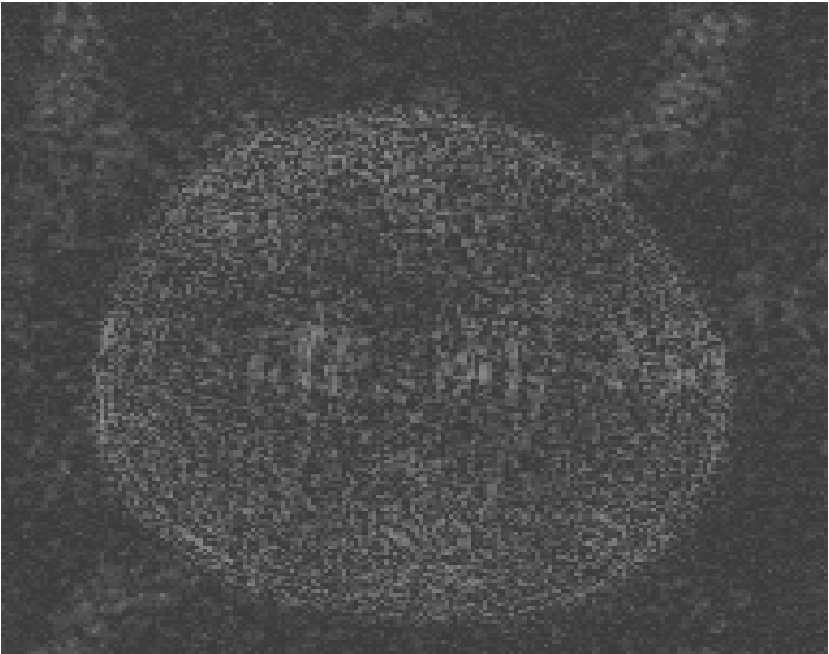}
  \includegraphics[width=28mm,height=28mm]{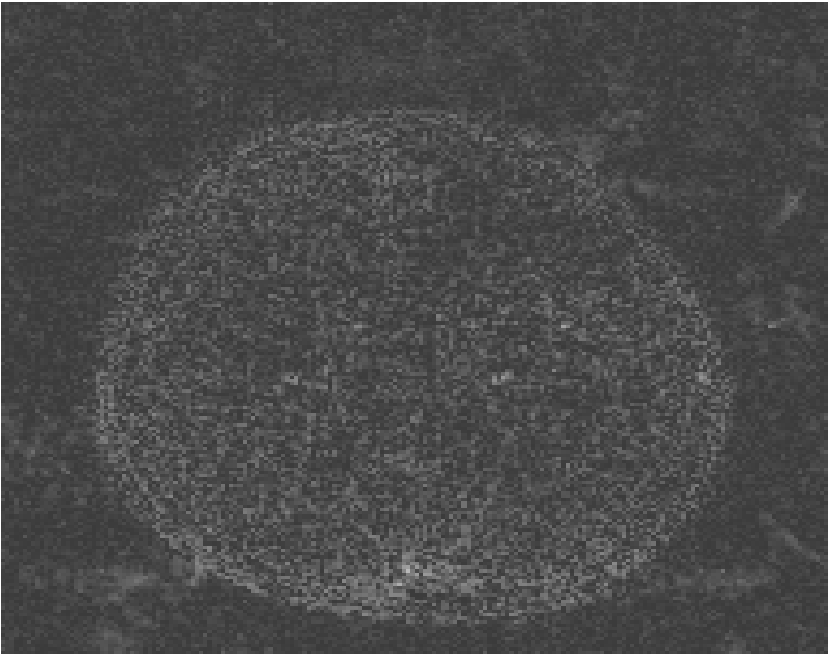}
  \includegraphics[width=28mm,height=28mm]{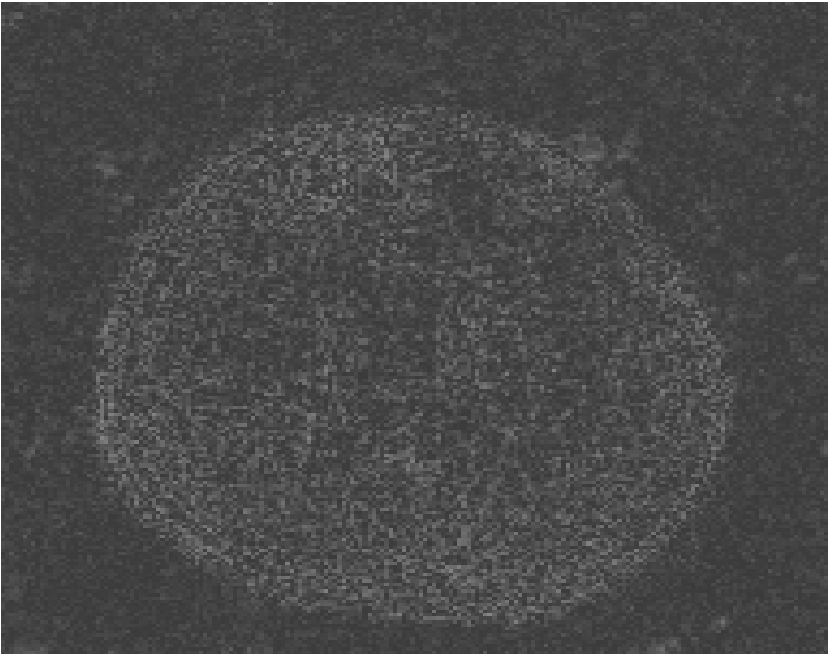}
	\includegraphics[width=28mm,height=28mm]{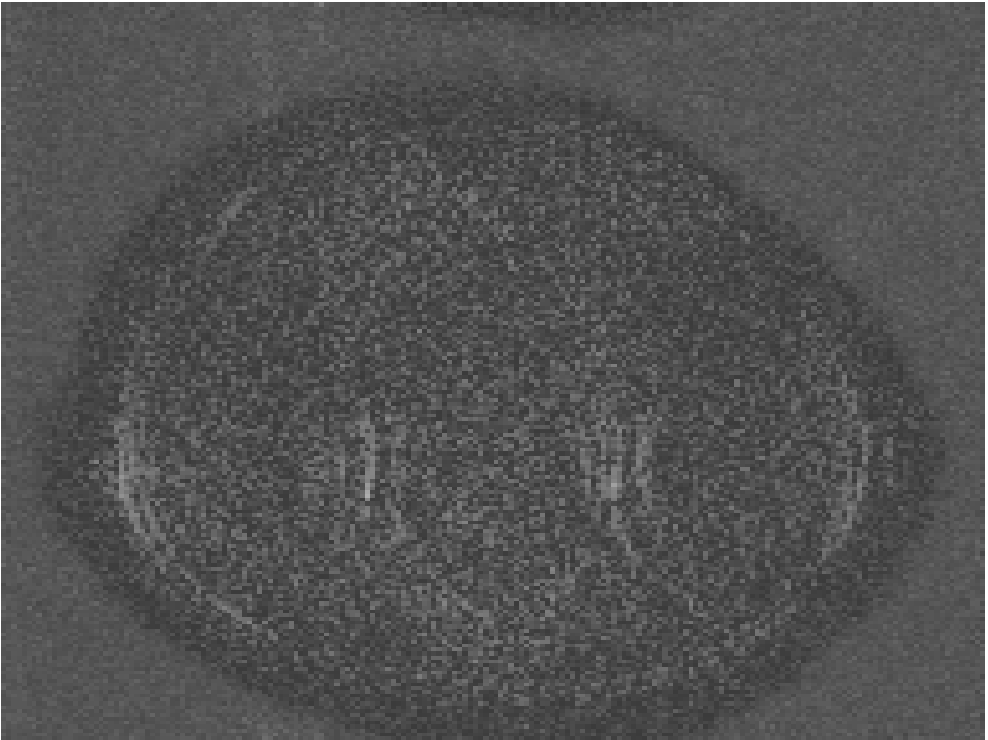}
	\includegraphics[width=28mm,height=28mm]{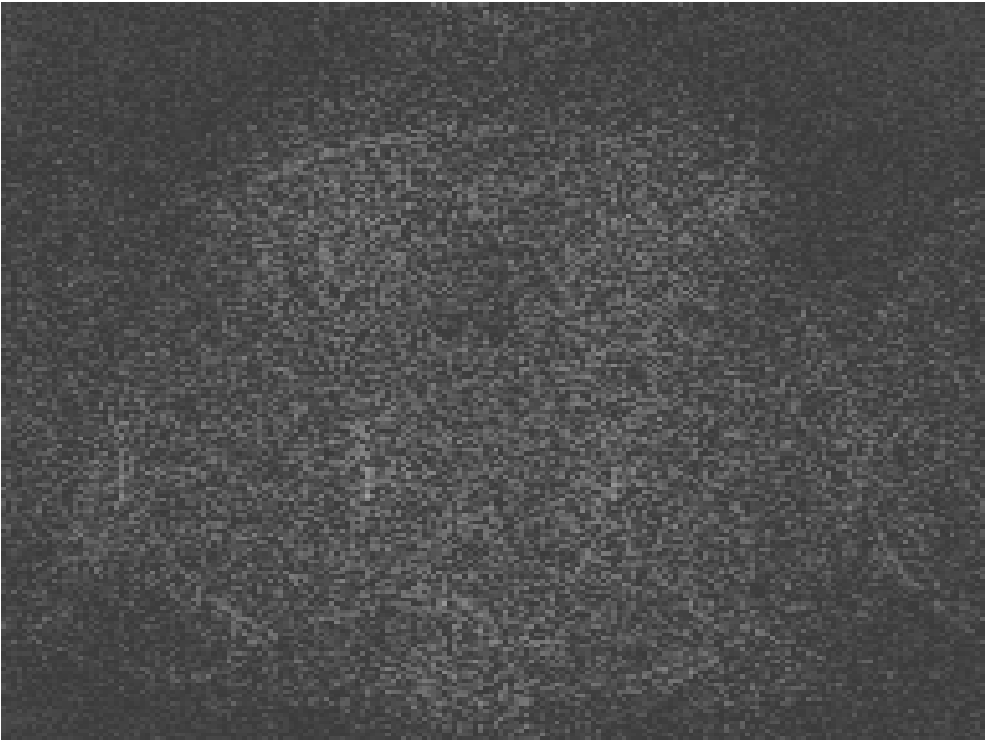}

	\end{center}
\caption{Reconstruction results obtained from a fourth of the $k$-space data of the second data set with $8$ coils (every fourth column acquired) for MOCCA-S, ESPIRiT, and GRAPPA. From left to right: 
(1)  MOCCA-S with $L=7$,   (2) MOCCA-S with $L=9$,  and (3) MOCCA-S with $L=11$,  all using Algorithm \ref{alg2} with Algorithm \ref{alg3} and smoothing with $\lambda=0.002$, (4) ESPIRiT, (5) GRAPPA.
Corresponding  error maps are given  below, where darker means smaller error. All error images use the same scale with relative error in $[0,0.14]$, where $0$ corresponds to black and $0.14$ to white.}
\label{figure_Bild2}
\end{figure}

Finally, in  Figure \ref{fig:coil2}, we illustrate the magnitude and the phase of the 8 normalized coil sensitivities $\tilde{\mathbf s}^{(j)}$ obtained for the second data set with $L=9$.  Since these  sensitivities  are constructed from bivariate trigonometric polynomials of higher degree (with $81$ nonzero coefficients) they possess a more oscillatory behavior in magnitude and phase than the sensitivities for the first data set for $L=5$. 

\begin{figure}[h!]
\begin{center}
	  \includegraphics[width=65mm,height=26mm]{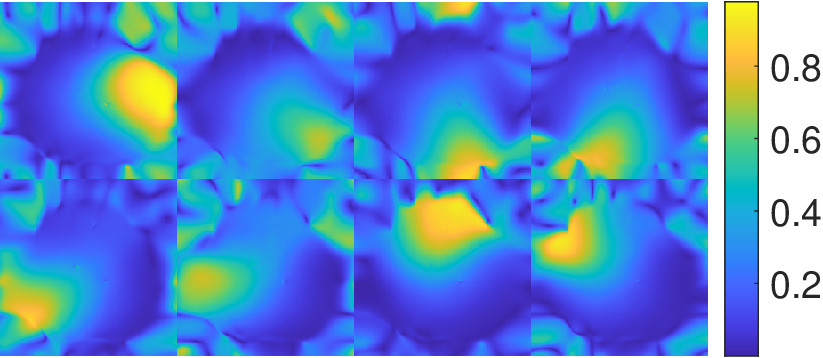} \hspace{3mm}
	\includegraphics[width=65mm,height=26mm]{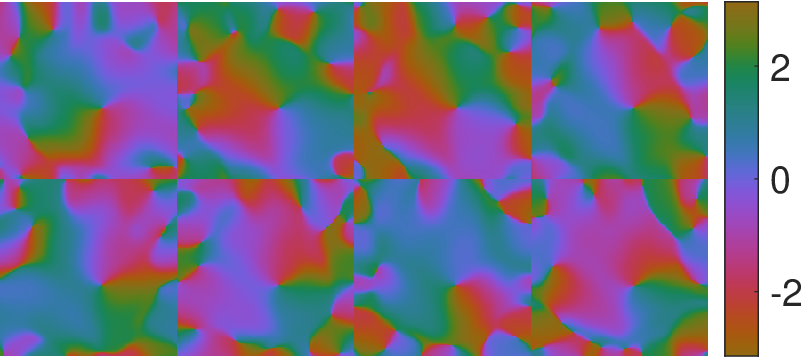}
		  \includegraphics[width=65mm,height=26mm]{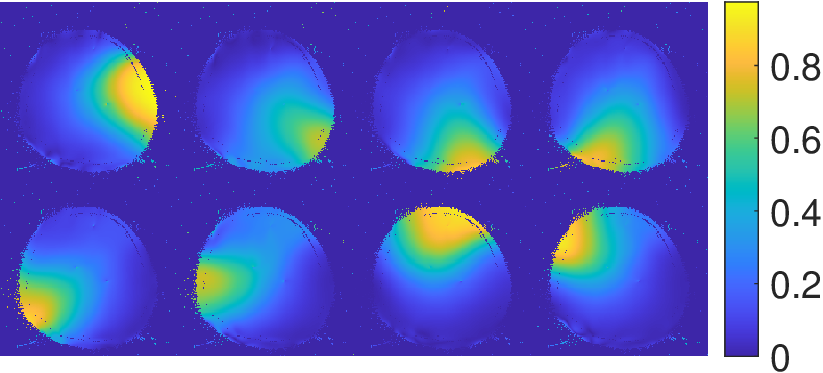} \hspace{3mm}
	\includegraphics[width=65mm,height=26mm]{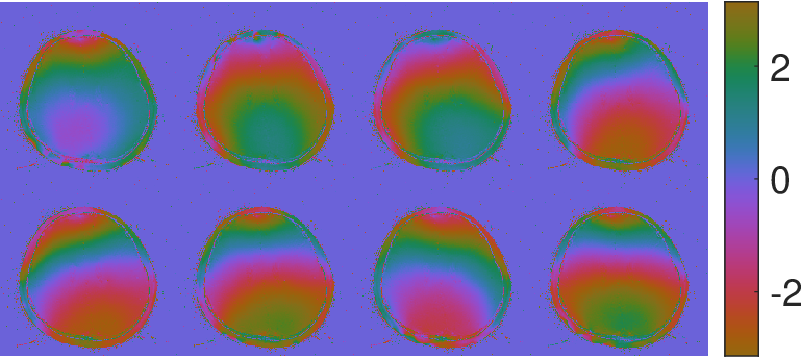}
\end{center}
\caption{Magnitude (left) and phase $($in $[-\pi,\pi])$ (right) of the $8$ coil sensitivities obtained for MRI reconstruction for $L=9$ for the second data set before and after multiplication with $\sign(\mm)$, see step 7 of Algorithm \ref{alg2}. The  normalized sensitivities are samples of bivariate trigonometric polynomials with $L^2=81$ nonzero coefficients, pointwisely multiplied with the normalization factors $({\mathbf d}^{+})^{\frac{1}{2}}$ to ensure the sos condition. }
\label{fig:coil2}
\end{figure}

The  implementation of the MOCCA algorithms have been performed  in Matlab. The presentation of 
coils  in Figures \ref{fig:coil1} and \ref{fig:coil2} uses  {\tt imshow3.m} by M. Lustig and  {\tt phasemap.m} by 
C. Greene. 
The code is available at  \url{https://na.math.uni-goettingen.de}.

\section{Conclusions}
Previous sub-space methods in parallel MRI  \cite{SAKE,ESPIRIT,PLORAKS,ALOHA} are usually based on the assumption that the block Hankel matrix $({\mathbf Y}_{N,L}^{(0)}, {\mathbf Y}_{N,L}^{(1)}, \ldots , {\mathbf Y}_{N,L}^{(N_c-1)}) \in {\mathbb C}^{N^2 \times L^2 N_c}$, where ${\mathbf Y}_{N,L}^{(j)} = (y_{(\nuu-\rr) \mod \Lambda_N}^{(j)})_{\nuu \in \Lambda_N, \rr \in \Lambda_L}$ with $L$ being a suitable small window size, has low rank. More exactly, it is assumed that the rank of this matrix is essentially smaller than $L^2 N_c$. The application of these methods based on (structured) low-rank matrix approximations and the study the reconstructed sensitivities ${\mathbf s}^{(j)}$ leads to the observation that the ${\mathbf s}^{(j)}$  usually have (approximately) a small support in $k$-space, see e.g. \cite{SAKE,ESPIRIT}. Our new MOCCA approach  directly computes  the sensitivities and the magnetization image based on the a priori fixed model (\ref{sjk}) (resp.\ (\ref{models1})) and can therefore be seen as a counterpart of the known algorithms. Our different view provides several advantages: We can provide simple and fast reconstruction algorithms for  incomplete k-space data in parallel MRI achieving similarly good reconstruction results as the best sub-space methods. 

One question, which still stays to be open regarding the MOCCA approach is to determine a suitable support index set $\Lambda_L$ for the coil sensitivities in practice. A larger support index set may lead to many singular values of the MOCCA matrix ${\mathbf A}_M$ in (\ref{AM}) being close to zero. In this case, the choice of a suitable  linear combination of singular vectors is crucial to achieve satisfying sensitivities and image reconstructions. 
In this paper, we have exemplarily shown that the MOCCA algorithm can outperform several other methods, while the problem of finding an optimal linear combination of singular vectors in case of overestimated support set is still under investigation. 

There is a close connection between the MOCCA approach and the subspace methods ESPIRiT and SAKE which we study in a forthcoming paper \cite{KP24}. 
A better understanding of the relation between subspace methods and sensitivity modelling will help us to answer the question of optimal recovering  of sensitivity profiles with small $k$-space support and  to find other sensitivity models being appropriate for parallel MRI reconstructions thereby still allowing fast reconstruction procedures.

\section*{Acknowledgements}
G. Plonka and Y. Riebe acknowledge support  by the Deutsche Forschungsgemeinschaft in the CRC 1456, project B03 and by the 
 European Union’s Horizon 2020 research and innovation programme under the Marie Skłodowska-Curie grant agreement No 101008231. Yannick Riebe acknowledges support by the Deutsche Forschungsgemeinschaft in the RTG 2088.
 The authors thank Benjamin Kocurov and Martin Uecker for helpful discussions.  

 \bibliographystyle{siam}
\bibliography{references}
\end{document}